\documentclass[12pt,reqno]{amsart}

\DeclareMathOperator{\ann}{ann}%
\DeclareMathOperator{\proj}{proj}%

\usepackage{amssymb}
\usepackage{hyperref}
\usepackage[all]{xypic} 
\usepackage{color}
\usepackage{tikz}
\usetikzlibrary{calc}

\addtolength{\oddsidemargin}{-.8in}
\addtolength{\evensidemargin}{-.8in}
\addtolength{\textwidth}{1.6in}
\addtolength{\topmargin}{-.5in}
\addtolength{\textheight}{.5in}
\addtolength{\marginparwidth}{-10pt}

\newcommand{\F}{\mathbb{F}}

\newcommand{\Fp}{\F_p}

\newcommand{\Gal}{\text{\rm Gal}}
\newcommand{\Ic}{\mathcal{I}}

\newcommand{\N}{\mathbb{N}}

\newcommand{\Z}{\mathbb{Z}}
\newcommand{\comment}[1]{}

\makeindex

\begin{document}

\keywords{modular representation theory, indecomposable module, Galois module}
\subjclass[2010]{Primary 20C20}

\thanks{The first author is partially supported by the Natural Sciences and Engineering Research Council of Canada grant R0370A01.  He also gratefully acknowledges the Faculty of Science Distinguished Research Professorship, Western Science, in years 2004/2005 and 2020/2021. The second author is partially supported by 2017--2019 Wellesley College Faculty Awards. The third author was supported  in part by National Security Agency grant MDA904-02-1-0061.}

\title[On the indecomposability of a remarkable new family of modules]{On the indecomposability of a remarkable new family of modules appearing in Galois theory}

\author[J\'{a}n Min\'{a}\v{c}]{J\'{a}n Min\'{a}\v{c}}
\address{Department of Mathematics, Western University, London, Ontario, Canada N6A 5B7}
\email{minac@uwo.ca}

\author[Andrew Schultz]{Andrew Schultz}
\address{Department of Mathematics, Wellesley College, 106 Central Street, Wellesley, MA \ 02481 \ USA}
\email{andrew.c.schultz@gmail.com}

\author[John Swallow]{John Swallow}
\address{Office of the President, Carthage College, 2001 Alford Park Drive, Kenosha, WI \ 53140 \ USA}
\email{jswallow@carthage.edu}

\begin{abstract}
A powerful new perspective in the analysis of absolute Galois groups has recently emerged from the study of Galois modules related to classical parameterizing spaces of certain Galois extensions.  The recurring trend in these decompositions is their stunning simplicity: almost all summands are free over some quotient ring.  The non-free summands which appear are exceptional not only because they are different in form, but because they play the key role in controlling arithmetic conditions that allow the remaining summands to be easily described.  In this way, these exceptional summands are the lynchpin for a bevy of new properties of absolute Galois groups that have been gleaned from these surprising decompositions.

In one such recent decomposition, a remarkable new exceptional summand was discovered which exhibited interesting properties that have not been seen before.  The exceptional summand is drawn from a particular finite family that has not yet been investigated.  The main goal of this paper is to introduce this family of modules and verify their indecomposability.  We believe this module will be of interest to people working in Galois theory, representation theory, combinatorics, and general algebra.  The analysis of these modules includes some interesting new tools, including analogs of $p$-adic expansions.  
\end{abstract}

\date{\today}

\maketitle

\newtheorem*{theorem*}{Theorem}
\newtheorem*{lemma*}{Lemma}
\newtheorem{theorem}{Theorem}
\newtheorem{proposition}{Proposition}[section]
\newtheorem{corollary}[proposition]{Corollary}
\newtheorem{lemma}[proposition]{Lemma}

\theoremstyle{definition}
\newtheorem*{definition*}{Definition}
\newtheorem*{remark*}{Remark}

\parskip=10pt plus 2pt minus 2pt

\section{Introduction}

\subsection{Motivation}

The inverse Galois problem asks: for a given group $G$ and field $K$, does there exist some extension $L/K$ so that $\Gal(L/K) \simeq G$?  This problem is extremely challenging and is one of the most important outstanding problems in mathematics.  The most satisfying solution to this problem would be to have an explicit parameterization of all $G$-extensions of $K$ (i.e., extensions $L/K$ with $\Gal(L/K) \simeq G$).  In certain cases, such a parameterization is already known.  For instance, if $p$ is prime, the elementary $p$-abelian extensions of $K$ are parameterized by $1$-dimensional $\mathbb{F}_p$-subspaces of an $\mathbb{F}_p$-vector space $J(K)$.  The exact structure of $J(K)$ depends on some field-theoretic properties of $K$. The most familiar situation is when $\text{char}(K) \neq p$ and $K$ contains a primitive $p$th root of unity, in which case Kummer theory tells us that $J(K) = K^\times/K^{\times p}$.  

For a given field $K$, there is an object which encodes the information of all Galois extensions of $K$ simultaneously: the absolute Galois group of $K$.  This is the Galois group associated to the separable closure of $K$, and it is typically denoted $G_K$. It is a profinite group. By Galois theory, the extensions of $K$ with Galois group $G$ are determined by continuous surjections $G_K \twoheadrightarrow G$.  Since absolute Galois groups give complete insight into the inverse Galois problem, it should not be surprising to hear that for a general field $K$, computing $G_K$ is not currently feasible.  (For example, the group $G_\mathbb{Q}$ is much sought after.) Because of their connection to the inverse Galois problem (as well as other arithmetic properties of a field $K$), understanding any information about absolute Galois groups is highly desirable.  For instance, we can ask for a kind of realizability question for absolute Galois groups: for a given profinite group $G$, is there some field $K$ so that $G_K \simeq G$?  Or perhaps more loosely: can we distinguish the collection of absolute Galois groups among the collection of profinite groups?

One method for doing this is to first focus a bit more narrowly on the class of $p$-groups, in which case there is a corresponding classifying Galois object: the maximal pro-$p$ quotient of $G_K$.  We will denote this by $G_K(p)$, and call it the absolute $p$-Galois group.  It is a pro-$p$ group.  As with absolute Galois groups, one is very interested in understanding the ways in which absolute $p$-Galois groups are ``special" within the collection of pro-$p$ groups.  Although computing specific groups $G_K(p)$ remains extremely challenging, there are a number of ways in which these absolute $p$-Galois groups distinguish themselves.  Readers who are interested in exploring some concrete manifestations of these distinguishing properties may consult \cite{Jensen1,Jensen2,Jensen3,JensenPrestel1,JensenPrestel2,JensenLedetYui,Kiming,Ledet,Lorenz}.

Recently,  decompositions of a number of Galois modules have provided new insight into structural properties of absolute Galois groups.  We will explore examples that illustrate this claim below, but for now we point out some trends that have emerged in these decompositions. Most conspicuously, these modules are far simpler than one would expect \emph{a priori}, with the vast majority of summands free over some appropriately chosen quotient ring.  This simpler-than-expected structure can then be translated into properties that distinguish absolute $p$-Galois groups from the larger class of pro-$p$ groups. Another extremely interesting feature of these modules is the appearance of certain ``exceptional" summands which are discovered \emph{en route} to understanding particular arithmetic conditions related to the module.  These summands are exceptional on a number of fronts: they are different in form from the other summands, as they are often non-free; they are far more scarce than other summand types, often with only a single exceptional summand appearing in the entire module; and their connection to critical arithmetic conditions makes them pivotal in controlling behaviors in the rest of the module.  In this way, these exceptional summands are the lynchpin for a bevy of new properties of absolute Galois groups that have been gleaned from these surprising decompositions.

To illustrate the kinds of modules described in the previous paragraph, we start with the simplest case.  Note that the group $G_K(p)$ has a maximal elementary $p$-abelian quotient.  If we assume that $K$ contains a primitive $p$th root of unity, then Kummer theory tells us that the maximal elementary $p$-abelian quotient corresponds to our old friend $J(K) = K^\times/K^{\times p}$.  If $K$ is itself a Galois extension of a field $F$, then the natural action of $\Gal(K/F)$ descends to an action on $J(K)$, and so $J(K)$ becomes an $\mathbb{F}_p[\Gal(K/F)]$-module.  The $\mathbb{F}_p[\Gal(K/F)]$-module structure of $J(K)$, therefore, becomes an avenue for exploring the structure of $G_K(p)$.

The Galois module structure of $J(K)$ has been examined quite extensively in the case where $\Gal(K/F)$ is a cyclic $p$-group (see \cite{BS,Bo,F,MSS,MS,Schultz}).  To illustrate this, consider the case where $K$ contains a primitive $p$th root of unity, and let $n \in \mathbb{N}$ be given so that $\Gal(K/F) \simeq \mathbb{Z}/p^n\mathbb{Z}$.  We let $K_i$ denote the intermediate extension in $K/F$ for which $[K_i:F]=p^i$, and denote the corresponding Galois group $\Gal(K_i/F)$ by $G_i$.  In \cite{MSS} it is shown that with at most one exception, each summand of $J(K)$ is isomorphic to $\mathbb{F}_p[G_i]$ for some $0 \leq i \leq n$.  The one exception to this rule --- the ``exceptional summand" --- is cyclic of dimension $p^{i(K/F)}+1$ for some $i(K/F) \in \{-\infty,0,1,\cdots,n-1\}$.  Though the exceptional summand is an outlier in terms of its module structure, it plays a critical role in controlling certain arithmetic conditions that allow the remaining summands to be ``free".  (In the cases where $K$ does not contain a primitive $p$th root of unity, the decomposition of $J(K)$ is even simpler; see \cite{BS,Schultz}.)  

In addition to $J(K)$, a number of related modules have also been examined in the case where $\Gal(K/F)$ is a cyclic $p$-group.   For example, when $K$ contains a primitive $p$th root of unity, it is well known that $K^\times/K^{\times p} \simeq H^1(G_K(p),\mathbb{F}_p)$.  As such, it is natural to ask for the structure of higher cohomology groups as Galois modules as well.  These have been explored in \cite{LMSS,LMS}, where the connections between Galois cohomology and quotients of Milnor $K$-groups are used in powerful ways.  Further investigations into Galois modules related to Milnor $K$-groups in the characteristic $p$ case are also considered in \cite{BLMS}.  Though the decomposition of these other Galois modules might vary from the structural properties of $J(K)$ in certain details, they conform to the same spirit: most summands are free over quotients of $\Gal(K/F)$, with relatively few ``exceptional" summands that help keep track of arithmetic information that is critical to the decomposition.

The Galois modules described above have been connected to explicit structural properties of some Galois groups and absolute $p$-Galois groups, as well as $p$-Sylow subgroups of absolute Galois groups.  These include module-theoretic parameterizations for certain families of $p$-groups \cite{MSS.auto,MS2,Schultz,Waterhouse}, automatic realization results \cite{CMSHp3,MSS.auto,MS2,Schultz}, realization multiplicity results \cite{BS,CMSHp3,Schultz}, and even explicit restrictions on the generators and relations of such a group \cite{BLMS.prop.groups}.  In the very interesting paper \cite{BarySorokerJardenNeftin}, the authors use various Galois modules to make substantial progress towards determining the explicit structure of $p$-Sylow subgroups of absolute Galois groups of $\mathbb{Q}$ for odd primes $p$.  An important further ingredient and motivation for this work is the paper \cite{Labute}.

Given the success and applicability of previous computations, determining the structure of additional modules seems a promising way to gain deeper insights into the structure of absolute $p$-Galois groups.  One path for new exploration comes from considering more complex group actions, say when $\Gal(K/F)$ is a non-cyclic $p$-group.  Another route would be to keep $\Gal(K/F)$ a cyclic $p$-group, but allow more nuance in the modules themselves; a natural first candidate are the power classes $K^\times/K^{\times p^m}$, where $m$ is an integer larger than $1$.  In both cases, the daunting roadblock is that the modular representation theory becomes foreboding: whereas there are only finitely many isomorphism classes of indecomposable $\mathbb{F}_p[\mathbb{Z}/p^n\mathbb{Z}]$-modules, analogous classifications are typically not available in these more complex settings.  More precisely, if $G$ is a non-cyclic $p$-group other than the Klein $4$-group, then the representation type of $\mathbb{F}_p[G]$-modules is wild, making the classification of all indecomposable types essentially hopeless.  Things are somewhat better for $\mathbb{Z}/p^m\mathbb{Z}[\mathbb{Z}/p^n\mathbb{Z}]$-modules when $m>1$.  In \cite{Szekeres}, Szekeres classified the indecomposables when $m>1$ and $n=1$ (though his work is under the guise of classifying metabelian groups).  Additional results in the case where $m,n \geq 2$ have been determined in \cite{DrobotenkoEtAl,HannulaThesis,Hannula68,Thevenaz81}, but only in the case where the $\mathbb{Z}/p^m\mathbb{Z}$-action is free.  A general classification of indecomposable types for $m,n \geq 2$ is not known.  Readers interested in reviewing some of the work on the representation theory of group rings can consult \cite{Benson,Carlson} as well as \cite{Bresar,ErdmannWildon,Nakano}.

Despite the modular-theoretic complexities, Galois module decompositions in these more complex settings have recently been determined.  In collaboration with Chemotti, the authors of this paper determined the structure of $K^\times/K^{\times 2}$ when $\Gal(K/F)$ is the Klein $4$-group in \cite{CMSS}.  Two of the authors of this paper, in collaboration with Heller, Nguyen, and T\^an, determined the structure of $J(K)$ when $G$ is \emph{any} finite $p$-group under the assumption that $G_K(p)$ is a finitely-generated, free pro-$p$ group, at least in the case where $\xi_p \in F$ or $\text{char}(F) = p$ (see \cite{HMS}).  The spirit of the previous results holds up even in these more forbidding settings: most summands are free over quotients of $\Gal(K/F)$, with  exceptional summands helping to manage certain arithmetic conditions that ensure other summands are free.  In both of these papers, the indecomposable summands which appear have an isomorphism type that is already well known.

In \cite{MSS2b}, the authors of this paper compute the structure of $K^\times/K^{\times p^m}$ over $\mathbb{Z}/p^m\mathbb{Z}[\Gal(K/F)]$, again in the case where $\Gal(K/F)$ is a cyclic $p$-group. The themes we have seen before continue to persist in this decomposition, with mostly free summands and a single exceptional summand.  Also as before, this exceptional summand helps control certain arithmetic conditions that are key to the module decomposition.  Indeed, the exceptional summand is uncovered in an attempt to find an integer $d \equiv 1 \pmod{p}$ and elements $\alpha,\delta_0,\dots,\delta_m \in K^\times$ which are appropriately ``minimal" and that simultaneously satisfy
\begin{equation}\label{eq:norm.minimizing.equation}
\begin{split}
\alpha^{\sigma} &= \alpha^{d} \delta_0 \delta_1^p \cdots\delta_{m}^{p^m}\\
\xi_p &= N_{K/F}(\alpha)^{\frac{d-1}{p}}\cdot N_{K_{n-1}/F}(\delta_0) \cdot \left(\prod_{i=1}^{m} N_{K/F}(\delta_i)^{p^{i-1}} \right).
\end{split}
\end{equation}
Minimal, in this case, is defined in terms of the intermediate fields which contain the elements $\delta_i$, as well as the integer $d$.  Specifically, let $a_i=-\infty$ if $\delta_i=1$, and otherwise let $a_i \in \{0,1,\cdots,n\}$ be chosen minimally so that $\delta_i$ is contained in the intermediate field of degree $p^{a_i}$ over $F$ within $K/F$.  Then let $\mathbf{a}$ be the vector $(a_0,a_1,\cdots,a_{m-1})$.  In finding a minimal solution to equation (\ref{eq:norm.minimizing.equation}), one first attempts to minimize the vector $\mathbf{a}$ lexicographically, and then one attempts to maximize the $p$-adic valuation of $d-1$.  So, for instance, the vector $\mathbf{a}_1 = (2,5,10,11)$ is ``smaller" than the vector $\mathbf{a}_2 = (2,6,7,9)$, and for a fixed choice of $(a_1,a_1,\cdots,a_{m-1})$ one would prefer $d=1+p^6$ to $\tilde d = 1+p^3+p^7$. Having a minimal solution to these equations, it turns out, is  key to an inductive approach to understanding the structure of $K^\times/K^{\times p^m}$, since it is central to understanding any ``new" dependencies that arise as one moves from $K^\times/K^{\times p^{m-1}}$ to $K^\times/K^{\times p^m}$.  

In contrast to previous Galois module decompositions, the modules which appear as an exceptional summand in the Galois module structure of $K^\times/K^{\times p^m}$ have not been previously investigated.  More specifically, it is far from clear that they are indecomposable.  The purpose of this paper is to verify their indecomposability.  

In a larger sense, however, this paper is a testament to the interest and importance of this family of modules.  In the untamable wilderness of $\mathbb{Z}/p^m\mathbb{Z}[\mathbb{Z}/p^n\mathbb{Z}]$-modules, the underlying issue is that there are simply too many indecomposable types to ever understand.  A more tractable goal, however, is to identify those indecomposables that appear in significant ways across the mathematical spectrum.  This is precisely what the modules $X_{\mathbf{a},d,m}$ do, playing a starring role in the structure of the foundational Galois modules $K^\times/K^{\times p^m}$.  

\subsection{Statement of the main theorem}

Let $G$ denote an abstract cyclic group of order $p^n$,
$n\in \N$, with fixed generator $\sigma$. For $0\le i\le n$,
let $G_i$ denote  the quotient group of $G$ of order $p^i$.  For $m\in \N$ we
set $R_m:=\Z/p^m\Z$ and let $R_mG$ denote the group ring. For $i\in \N$ set $U_i=1+p^i\Z$ and additionally set $U_\infty=\{1\}$. We adopt several conventions, including $p^{-\infty}=0$ and that $\{0\}$ is a free module over any ring.  Moreover, if $M$ is any additive $R_mG$-module and $x \in M$ is given, we define $\sigma^{p^{-\infty}}x$ to be $0_M$.

For $\mathbf{a}\in \{-\infty,0,1,\dots,n\}^m$ and $d\in
\Z$, we define the (additive) $R_mG$-module $X_{\mathbf{a},d,m}$ according to the following generators and relations:
\begin{equation}\label{eq:definition.of.X}
\begin{split}
   \left\langle y, x_0, \dots, x_{m-1} :
    \ (\sigma-d)y=\sum_{i=0}^{m-1} p^ix_i~;
    \ \sigma^{p^{a_i}}x_i=x_i, \ 0\le i<m\right\rangle.
\end{split}
\end{equation}
In particular, note that for each $i$ we have $\langle x_i \rangle \simeq R_mG_{a_i}$.  This relation also gives us a convenient way to keep track of the elements in $X_{\mathbf{a},d,m}$ in a manner that is reminiscent of $p$-adic expansions.  Indeed, we have adapted some techniques from $p$-adic analysis into this representation-theoretic setting; some excellent resources on the $p$-adic numbers are available in \cite{Gouvea,Katok,Koblitz}. For example, since $\langle x_i \rangle \simeq R_mG_{a_i}$ it is not difficult to show that for any element $f(\sigma) \in R_mG_{a_i}$ there exist unique quantities $c_{k,\ell} \in \{0,1,\cdots,p-1\}$ so that $$f(\sigma) x_i = \sum_{k=0}^{p^{a_i}-1} \sum_{\ell=0}^{m-1} c_{k,\ell} p^\ell (\sigma-1)^{k} x_i.$$  The given relation (\ref{eq:definition.of.X}) makes such a decomposition also possible for elements in $\langle y \rangle$, this time expressing images of $y$ under the $R_mG$ action as ``$\mathbb{F}_p$-combinations" drawn from $\{y^\ell: 0 \leq \ell \leq m-1\} \cup \{p^\ell(\sigma-1)^k  \sum_{i=0}^{m-1} p^i x_i\}$.

In Figure \ref{fig:relations.depiction} we give a graphical depiction of this information in the special case $p=2$, $\mathbf{a} = (1,2,3,4)$, $d=1$, and $m=4$. In this depiction we have several 2-d arrays of boxes: one for each of $\langle x_0\rangle, \langle x_1\rangle, \langle x_2 \rangle$, and $\langle x_3\rangle$, as well as for the module $\langle y,x_0,x_1,x_2,x_3 \rangle$.  For each, progression from left to right along boxes indicates successive powers of $\sigma-1$, and progression from top to bottom indicates successive powers of $p$.  The highlighted box in each $\langle x_i\rangle$ indicates the contribution from each module to the value of $(\sigma-1)y$; the overall module is depicted as a ``layered" combination of the various modules.\footnote{Note that the choice of $\mathbf{a}$ in this graphical depiction violates one of the conditions of Theorem \ref{th:indecomp}, and is therefore not one of the modules we ultimately show is indecomposable.  Nevertheless, it gives a good indication of the ``shape" of these kinds of modules while still being small enough to reasonably fit on the page.} 
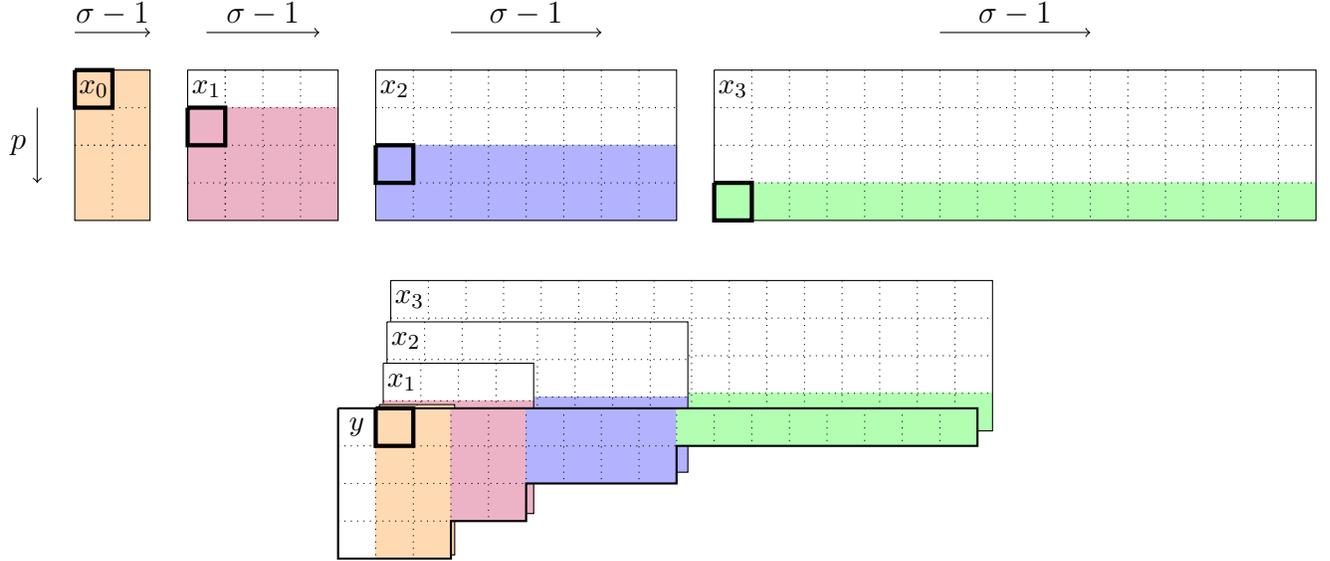
\begin{figure}[ht!]
\begin{tikzpicture}[scale=0.5]

\coordinate (A) at (14,0);
\filldraw[white] (A)--++(16,0)--++(0,-4)--++(-16,0)--++(0,4);
\filldraw[green!30!white] (A)++(0,-3)--++(16,0)--++(0,-1)--++(-16,0)--++(0,1);
\draw[-] (A)--++(16,0)--++(0,-4)--++(-16,0)--++(0,4);
\draw[-,ultra thick] (A)++(0,-3)--++(1,0)--++(0,-1)--++(-1,0)--++(0,1);
	\draw[-, dotted] (A)++(1,0)--++(0,-4);
	\draw[-, dotted] (A)++(2,0)--++(0,-4);
	\draw[-, dotted] (A)++(3,0)--++(0,-4);
	\draw[-, dotted] (A)++(4,0)--++(0,-4);
	\draw[-, dotted] (A)++(5,0)--++(0,-4);
	\draw[-, dotted] (A)++(6,0)--++(0,-4);
	\draw[-, dotted] (A)++(7,0)--++(0,-4);
	\draw[-, dotted] (A)++(8,0)--++(0,-4);
	\draw[-, dotted] (A)++(9,0)--++(0,-4);
	\draw[-, dotted] (A)++(10,0)--++(0,-4);
	\draw[-, dotted] (A)++(11,0)--++(0,-4);
	\draw[-, dotted] (A)++(12,0)--++(0,-4);
	\draw[-, dotted] (A)++(13,0)--++(0,-4);
	\draw[-, dotted] (A)++(14,0)--++(0,-4);
	\draw[-, dotted] (A)++(15,0)--++(0,-4);
	\draw[-,dotted] (A)++(0,-1)--++(16,0);
	\draw[-,dotted] (A)++(0,-2)--++(16,0);
	\draw[-,dotted] (A)++(0,-3)--++(16,0);
\node at ($(A)+(.5,-0.5)$) {\small $x_3$};
\node at ($(A)+(8,1.5)$) {$\sigma-1$};
\draw[->] ($(A)+(6,1)$) -- ($(A)+(10,1)$);

\coordinate (B) at (5,0);
\filldraw[white] (B)--++(8,0)--++(0,-4)--++(-8,0)--++(0,4);
\filldraw[blue!30!white] (B)++(0,-2)--++(8,0)--++(0,-2)--++(-8,0)--++(0,2);
\draw[-] (B)--++(8,0)--++(0,-4)--++(-8,0)--++(0,4);
\draw[-,ultra thick] (B)++(0,-2)--++(1,0)--++(0,-1)--++(-1,0)--++(0,1);	\draw[-,dotted] (B)++(1,0)--++(0,-4);
	\draw[-,dotted] (B)++(2,0)--++(0,-4);
	\draw[-,dotted] (B)++(3,0)--++(0,-4);
	\draw[-,dotted] (B)++(4,0)--++(0,-4);
	\draw[-,dotted] (B)++(5,0)--++(0,-4);
	\draw[-,dotted] (B)++(6,0)--++(0,-4);
	\draw[-,dotted] (B)++(7,0)--++(0,-4);
	\draw[-,dotted] (B)++(0,-1)--++(8,0);
	\draw[-,dotted] (B)++(0,-2)--++(8,0);
	\draw[-,dotted] (B)++(0,-3)--++(8,0);
\node at ($(B)+(.5,-0.5)$) {\small $x_2$};
\node at ($(B)+(4,1.5)$) {$\sigma-1$};
\draw[->] ($(B)+(2,1)$) -- ($(B)+(6,1)$);

\coordinate (C) at (0,0);
\filldraw[white] (C)--++(4,0)--++(0,-4)--++(-4,0)--++(0,4);
\filldraw[purple!30!white] (C)++(0,-1)--++(4,0)--++(0,-3)--++(-4,0)--++(0,3);
\draw[-] (C)--++(4,0)--++(0,-4)--++(-4,0)--++(0,4);
\draw[-,ultra thick] (C)++(0,-1)--++(1,0)--++(0,-1)--++(-1,0)--++(0,1);	\draw[-,dotted] (C)++(1,0)--++(0,-4);
	\draw[-,dotted] (C)++(2,0)--++(0,-4);
	\draw[-,dotted] (C)++(3,0)--++(0,-4);
	\draw[-,dotted] (C)++(0,-1)--++(4,0);
	\draw[-,dotted] (C)++(0,-2)--++(4,0);
	\draw[-,dotted] (C)++(0,-3)--++(4,0);
\node at ($(C)+(.5,-0.5)$) {\small $x_1$};
\node at ($(C)+(2,1.5)$) {$\sigma-1$};
\draw[->] ($(C)+(0.5,1)$) -- ($(C)+(3.5,1)$);

\coordinate (D) at (-3,0);
\filldraw[white] (D)--++(2,0)--++(0,-4)--++(-2,0)--++(0,4);
\filldraw[orange!30!white] (D)--++(2,0)--++(0,-4)--++(-2,0)--++(0,4);
\draw[-] (D)--++(2,0)--++(0,-4)--++(-2,0)--++(0,4);
\draw[-,ultra thick] (D)++(0,0)--++(1,0)--++(0,-1)--++(-1,0)--++(0,1);	\draw[-,dotted] (D)++(1,0)--++(0,-4);
	\draw[-,dotted] (D)++(0,-1)--++(2,0);
	\draw[-,dotted] (D)++(0,-2)--++(2,0);
	\draw[-,dotted] (D)++(0,-2)--++(2,0);
\node at ($(D)+(.5,-0.5)$) {\small $x_0$};
\node at ($(D)+(1,1.5)$) {$\sigma-1$};
\draw[->] ($(D)+(0,1)$) -- ($(D)+(2,1)$);
\node at ($(D)+(-1.5,-2)$) {$p$};
\draw[->] ($(D)+(-1,-1)$) -- ($(D)+(-1,-3)$);

\coordinate (A') at (5.4,-5.6);
\filldraw[white] (A')--++(16,0)--++(0,-4)--++(-16,0)--++(0,4);
\filldraw[green!30!white] (A')++(0,-3)--++(16,0)--++(0,-1)--++(-16,0)--++(0,1);
\draw[-] (A')--++(16,0)--++(0,-4)--++(-16,0)--++(0,4);
	\draw[-, dotted] (A')++(1,0)--++(0,-4);
	\draw[-, dotted] (A')++(2,0)--++(0,-4);
	\draw[-, dotted] (A')++(3,0)--++(0,-4);
	\draw[-, dotted] (A')++(4,0)--++(0,-4);
	\draw[-, dotted] (A')++(5,0)--++(0,-4);
	\draw[-, dotted] (A')++(6,0)--++(0,-4);
	\draw[-, dotted] (A')++(7,0)--++(0,-4);
	\draw[-, dotted] (A')++(8,0)--++(0,-4);
	\draw[-, dotted] (A')++(9,0)--++(0,-4);
	\draw[-, dotted] (A')++(10,0)--++(0,-4);
	\draw[-, dotted] (A')++(11,0)--++(0,-4);
	\draw[-, dotted] (A')++(12,0)--++(0,-4);
	\draw[-, dotted] (A')++(13,0)--++(0,-4);
	\draw[-, dotted] (A')++(14,0)--++(0,-4);
	\draw[-, dotted] (A')++(15,0)--++(0,-4);
	\draw[-,dotted] (A')++(0,-1)--++(16,0);
	\draw[-,dotted] (A')++(0,-2)--++(16,0);
	\draw[-,dotted] (A')++(0,-3)--++(16,0);
\node at ($(A')+(.5,-0.5)$) {\small $x_3$};

\coordinate (B') at (5.3,-6.7);
\filldraw[white] (B')--++(8,0)--++(0,-4)--++(-8,0)--++(0,4);
\filldraw[blue!30!white] (B')++(0,-2)--++(8,0)--++(0,-2)--++(-8,0)--++(0,2);
\draw[-] (B')--++(8,0)--++(0,-4)--++(-8,0)--++(0,4);
	\draw[-,dotted] (B')++(1,0)--++(0,-4);
	\draw[-,dotted] (B')++(2,0)--++(0,-4);
	\draw[-,dotted] (B')++(3,0)--++(0,-4);
	\draw[-,dotted] (B')++(4,0)--++(0,-4);
	\draw[-,dotted] (B')++(5,0)--++(0,-4);
	\draw[-,dotted] (B')++(6,0)--++(0,-4);
	\draw[-,dotted] (B')++(7,0)--++(0,-4);
	\draw[-,dotted] (B')++(0,-1)--++(8,0);
	\draw[-,dotted] (B')++(0,-2)--++(8,0);
	\draw[-,dotted] (B')++(0,-3)--++(8,0);
\node at ($(B')+(.5,-0.5)$) {\small $x_2$};

\coordinate (C') at (5.2,-7.8);
\filldraw[white] (C')--++(4,0)--++(0,-4)--++(-4,0)--++(0,4);
\filldraw[purple!30!white] (C')++(0,-1)--++(4,0)--++(0,-3)--++(-4,0)--++(0,3);
\draw[-] (C')--++(4,0)--++(0,-4)--++(-4,0)--++(0,4);
	\draw[-,dotted] (C')++(1,0)--++(0,-4);
	\draw[-,dotted] (C')++(2,0)--++(0,-4);
	\draw[-,dotted] (C')++(3,0)--++(0,-4);
	\draw[-,dotted] (C')++(0,-1)--++(4,0);
	\draw[-,dotted] (C')++(0,-2)--++(4,0);
	\draw[-,dotted] (C')++(0,-3)--++(4,0);
\node at ($(C')+(.5,-0.5)$) {\small $x_1$};

\coordinate (D) at (5.1,-8.9);
\filldraw[white] (D)--++(2,0)--++(0,-4)--++(-2,0)--++(0,4);
\filldraw[orange!30!white] (D)--++(2,0)--++(0,-4)--++(-2,0)--++(0,4);
\draw[-] (D)--++(2,0)--++(0,-4)--++(-2,0)--++(0,4);
	\draw[-,dotted] (D)++(1,0)--++(0,-4);
	\draw[-,dotted] (D)++(0,-1)--++(2,0);
	\draw[-,dotted] (D)++(0,-2)--++(2,0);
	\draw[-,dotted] (D)++(0,-2)--++(2,0);
\node at ($(D)+(.5,-0.5)$) {\small $x_0$};

\coordinate (E) at (4,-9);
\filldraw[white] (E)--++(17,0)--++(0,-1) --++(-8,0) --++(0,-1) --++(-4,0) --++(0,-1) --++(-2,0) --++(0,-1) --++(-3,0) --++(0,4);
\filldraw[orange!30!white] (E)++(1,0)--++(2,0)--++(0,-4)--++(-2,0)--++(0,4);
\filldraw[purple!30!white] (E)++(3,0)--++(2,0)--++(0,-3)--++(-2,0)--++(0,3);
\filldraw[blue!30!white] (E)++(5,0)--++(4,0)--++(0,-2)--++(-4,0)--++(0,2);
\filldraw[green!30!white] (E)++(9,0)--++(8,0)--++(0,-1)--++(-8,0)--++(0,1);
\draw[-,thick] (E)--++(17,0)--++(0,-1) --++(-8,0) --++(0,-1) --++(-4,0) --++(0,-1) --++(-2,0) --++(0,-1) --++(-3,0) --++(0,4);
\draw[-,ultra thick] (E)++(1,0)--++(1,0)--++(0,-1)--++(-1,0)--++(0,1);	\draw[-,dotted] (C)++(1,0)--++(0,-4);
	\draw[-,dotted] (E)++(1,0)--++(0,-4);
	\draw[-,dotted] (E)++(2,0)--++(0,-4);
	\draw[-,dotted] (E)++(3,0)--++(0,-3);
	\draw[-,dotted] (E)++(4,0)--++(0,-3);
	\draw[-,dotted] (E)++(5,0)--++(0,-2);
	\draw[-,dotted] (E)++(6,0)--++(0,-2);
	\draw[-,dotted] (E)++(7,0)--++(0,-2);
	\draw[-,dotted] (E)++(8,0)--++(0,-2);
	\draw[-,dotted] (E)++(9,0)--++(0,-1);
	\draw[-,dotted] (E)++(10,0)--++(0,-1);
	\draw[-,dotted] (E)++(11,0)--++(0,-1);
	\draw[-,dotted] (E)++(12,0)--++(0,-1);
	\draw[-,dotted] (E)++(13,0)--++(0,-1);
	\draw[-,dotted] (E)++(14,0)--++(0,-1);
	\draw[-,dotted] (E)++(15,0)--++(0,-1);
	\draw[-,dotted] (E)++(16,0)--++(0,-1);
	\draw[-,dotted] (E)++(0,-1)--++(9,0);
	\draw[-,dotted] (E)++(0,-2)--++(5,0);
	\draw[-,dotted] (E)++(0,-3)--++(3,0);
\node at ($(E)+(0.5,-0.5)$) {\small $y$};
\end{tikzpicture}
\caption{A graphical depiction of $X_{\mathbf{a},d,m}$ when $p=2$, $\mathbf{a}=(1,2,3,4)$, $d=1$, and $m=4$, as well as the cyclic module generated by each $x_i$ for $0 \leq i \leq 3$.}\label{fig:relations.depiction}
\end{figure}

Our main result in this paper is to show that under certain hypotheses on $\mathbf{a},d$ and $m$, the module $X_{\mathbf{a},d,m}$ is indecomposable.

\begin{theorem}\label{th:indecomp}\index{Theorem \ref{th:indecomp}}
    Suppose $m\in \N$, $\mathbf{a}=(a_0,\dots,a_{m-1}) \in \{-\infty,0,\dots,n\}^{m}$, and $d\in U_1$.  Assume further that $d^{p^{a_i}} \in U_{i+1}$ for all $0 \leq i <m$.  
    
    If $p$ is odd, and if $a_i+j<a_{i+j}$ for all $0 \leq i < m$ with $1 \leq j < m-i$ and $a_{i+j} \neq -\infty$, then $X_{\mathbf{a},d,m}$ is an indecomposable $R_mG$-module.
    
    Moreover, if $p=2$, suppose we have 
    \begin{itemize}
    \item if $n=1$, then $a_0=\infty$;
    \item if $0 \leq i < m$ and $1 \leq j < m-i$, then $a_i+j<a_{i+j}$, unless $d \not\in U_2$, $i=0$, and $a_i=0$, in which case $a_j \neq 0$ for all $1 \leq j < m$; and
    \item if $m \geq 2$, $d \in -U_v\setminus -U_{v+1}$ for some $v \geq 2$, and $a_0=0$, then $a_i > i-(v-1)$ for all $v \leq i < m$ and $a_i \neq -\infty$.
    \end{itemize}  Then $X_{\mathbf{a},d,m}$ is an indecomposable $R_mG$-module.
\end{theorem}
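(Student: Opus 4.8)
The plan is to show directly that the endomorphism ring $\operatorname{End}_{R_mG}(X_{\mathbf{a},d,m})$ is local, which is equivalent to indecomposability. The first step is to set up a useful normal form for elements of $X := X_{\mathbf{a},d,m}$, exploiting the "$p$-adic expansion" description sketched after equation~(\ref{eq:definition.of.X}): every element has a unique expression as an $\mathbb{F}_p$-combination of the monomials $y, py, \dots, p^{m-1}y$ together with the monomials $p^\ell(\sigma-1)^k x_i$ for $0 \le i < m$, $0 \le \ell < m$, $0 \le k < p^{a_i}$. One then records, for each homogeneous piece, its "$(p,\sigma-1)$-degree" and uses the defining relation $(\sigma-d)y = \sum_i p^i x_i$ together with the hypothesis $d^{p^{a_i}} \in U_{i+1}$ to see that applying $\sigma^{p^{a_i}}-1$ (equivalently, the norm-type operators $N_i = 1 + \sigma^{p^i} + \cdots + \sigma^{p^{i+1}-p^i}$ up the tower $G_0 < G_1 < \cdots$) to $y$ lands one in the appropriately deeper layer. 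The combinatorial hypotheses on $\mathbf{a}$ (the inequalities $a_i + j < a_{i+j}$, with the stated exceptions at $p=2$) are exactly what guarantees that these layers are "staggered" — that is, that the images $p^i x_i$ occupy genuinely distinct positions in the $(p,\sigma-1)$-filtration — so that no collapsing or unexpected cancellation occurs. I would isolate this as a lemma: a precise statement of the normal form together with the action of $\sigma$ and $p$ on it.

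Next I would analyze an arbitrary endomorphism $\varphi$. Because $X$ is generated by $y$ and the $x_i$, $\varphi$ is determined by $\varphi(y)$ and the $\varphi(x_i)$, subject to compatibility with the relations: $\sigma^{p^{a_i}}\varphi(x_i) = \varphi(x_i)$ forces $\varphi(x_i) \in \ker(\sigma^{p^{a_i}}-1)$, and $(\sigma - d)\varphi(y) = \sum_i p^i \varphi(x_i)$. The heart of the argument is to show that after writing $\varphi(y) = c\,y + (\text{lower-order terms})$ for a scalar $c \in R_m$, the constraint coming from the relation forces each $\varphi(x_i)$ to have leading coefficient $c$ as well (modulo the relevant power of $p$), using the staggering from the first step to match up terms degree-by-degree. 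Concretely: apply $(\sigma-d)$ to the normal form of $\varphi(y)$, expand using the lemma, and compare with $\sum_i p^i \varphi(x_i)$; the staggered-layer structure means the equation can be solved layer by layer, and the top layer pins down $c$ while each successive layer pins down the leading behavior of the next $\varphi(x_i)$. One then shows: if $c \in U_1$ (a unit in $R_m$ congruent to $1$), then $\varphi$ is an automorphism — e.g. by checking it is surjective (the leading terms generate) and then using that $X$ is a finite module so surjective endomorphisms are automorphisms; and if $c \in pR_m$, then $\varphi$ is nilpotent, because it strictly raises the $p$-adic depth of every generator and $p^m = 0$. Since every scalar in $R_m$ is either a unit congruent to a power of... more precisely every element of $R_m$ is either a unit or lies in $pR_m$, and the units that can occur as such a leading scalar are constrained (here the hypothesis $d \in U_1$, and at $p=2$ the sign conditions $d \in -U_v$, enter to rule out the problematic unit $c = -1$ and its relatives), this dichotomy shows $\operatorname{End}(X) = (\text{units}) \sqcup (\text{nilpotents})$, hence is local.

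I would handle $p$ odd first, where the hypotheses are cleanest, and then treat $p = 2$ by isolating the extra failure modes: the case $n=1$ (forcing $a_0 = \infty$, i.e. the relation $\sigma^{p^{a_0}}x_0 = x_0$ is vacuous in the sense that $a_0$ is maximal), the exceptional configuration with $d \notin U_2$ and $a_0 = 0$, and the configuration with $d \in -U_v \setminus -U_{v+1}$ and $a_0 = 0$. In each of these the naive staggering can fail by exactly one layer, and the supplementary inequalities ($a_j \neq 0$ for $1 \le j < m$, resp. $a_i > i - (v-1)$ for $v \le i < m$) are precisely the bookkeeping needed to restore a workable normal form; the argument is then the same local-ring argument with the layer-matching redone by hand in those cases.

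The main obstacle I anticipate is the first step — proving the normal form and, in particular, verifying that the combinatorial hypotheses on $\mathbf{a}$ really do prevent all collapsing in the $(p,\sigma-1)$-filtration of $\langle y \rangle$. The relation $(\sigma-d)y = \sum_i p^i x_i$ mixes the $\sigma$-adic and $p$-adic scales in a way that requires careful tracking (this is where the "analogs of $p$-adic expansions" advertised in the abstract do the real work), and the $p=2$ exceptions show the bookkeeping is genuinely delicate. Once the normal form is in hand, the endomorphism-ring computation should be a matter of organized linear algebra over $\mathbb{F}_p$ guided by the layer structure.
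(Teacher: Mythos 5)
Your route --- proving $\operatorname{End}_{R_mG}(X)$ is local by extracting a ``leading scalar'' $c\in R_m$ from each endomorphism --- is genuinely different from the paper's (which inducts on $m$, applies Krull--Schmidt to $X/p^{m-1}X \simeq X_{\check{\mathbf{a}},d,m-1}\oplus R_{m-1}G_{a_{m-1}}$, and refutes a splitting $X=V\oplus W$ by producing a nonzero element of $V^\star\cap W^\star$), and it could in principle be made to work. But the two claims on which everything rests are essentially the theorem itself and are not established. First, uniqueness of your normal form amounts to an exact $\Fp$-dimension count for $X$, i.e.\ to non-vanishing statements such as $p^{m-1}(\sigma-1)^{p^{a_{m-1}}-1}x_{m-1}\neq 0$; the paper proves such statements by writing the putative relation as an explicit syzygy in the free module on $y,x_0,\dots,x_{m-1}$ and applying the evaluation maps $\phi_d$ and $\chi_j$ (see the interiors of Lemma~\ref{le:casea} and Lemma~\ref{le:indecomp2}(\ref{it:j4})), and nothing in your sketch substitutes for that. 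Second, the assertion that $(\sigma-d)\varphi(y)=\sum p^i\varphi(x_i)$ forces every $\varphi(x_i)$ to share the leading coefficient $c$ of $\varphi(y)$ modulo $\Ic X$ is exactly equivalent to indecomposability: the paper's final section exhibits, when hypothesis (\ref{it:exc.mod.indecom.condition...ai.vs.aj.inequality}) fails, an idempotent with $\varphi(y)=Qx_{m-1}$ (leading coefficient $0$) but $\varphi(x_{m-1})=x_{m-1}$ (leading coefficient $1$). So this forcing cannot follow from degree bookkeeping alone; it needs the quantitative control of $\phi_d^{(m)}(P(i,j))$ and of $\ann(\sigma-d)$ developed in Section~\ref{se:normoperators}, and you must also rule out $\varphi(x_i)$ having a unit $y$-component, since a priori the induced matrix on $X/\Ic X$ need not be triangular.

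Two supporting steps are also wrong as stated. If $c\in pR_m$, the endomorphism does not ``strictly raise the $p$-adic depth of every generator'': $\varphi(y)=cy+\sum g_ix_i$ with some $g_i$ a unit of $R_mG$ raises no depth at all, so nilpotence must come from the (unproved) triangularity-with-constant-diagonal of the induced map on $X/\Ic X$, not from $p^m=0$. And the hypotheses $d\in U_1$ and $d\in -U_v$ do not serve to ``rule out the problematic unit $c=-1$'' as a leading scalar --- any unit scalar times the identity is an automorphism, so no units need excluding; those hypotheses constrain the module itself, entering through $\ann_{R_mG_i}(\sigma-d)=\langle p^kQ_d(i,0)\rangle$ (Lemma~\ref{le:phidb}) and the congruences for $\phi_d(P(i,j))$ (Lemma~\ref{le:phi}), which is where the paper actually consumes them. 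In short, the architecture is plausible, but the ``organized linear algebra'' you defer is the entire content of the proof.
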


As mentioned above, the authors began investigating this module because it appears in the $R_mG$ decomposition of $J_m:=K^{\times}/K^{\times p^m}$ when $\Gal(K/F) \simeq G$ (see \cite{MSS2b}).  In particular, if $d \in \mathbb{Z}$ and $\mathbf{a} = (a_0,\cdots,a_{m-1})$ are chosen minimally so that there exists $\alpha \in K^\times$ and $\delta \in K_{a_i}^\times$ which solve Equation (\ref{eq:norm.minimizing.equation}), then the module generated by their classes in $K^\times/K^{\times p^m}$ --- i.e., $\langle [\alpha]_m,[\delta_0]_m,[\delta_1]_m,\cdots,[\delta_{m-1}]_m\rangle$ --- is isomorphic to $X_{\mathbf{a},d,m}$.  In fact, each of the conditions listed in Theorem \ref{th:indecomp} arose because it was shown to be a necessary condition for a minimal solution to Equation (\ref{eq:norm.minimizing.equation}): failure to adhere to any single condition meant that one could find a ``smaller" solution. Field-theoretic interpretations for the quantities $\mathbf{a}$ and $d$ are explored in \cite{MSS2c}.

In addition to showing that $X_{\mathbf{a},d,m}$ is indecomposable, we will show in Proposition \ref{pr:geta}
that --- under the hypotheses of Theorem \ref{th:indecomp} --- the quantity $\mathbf{a}$ is a module-theoretic invariant of $X_{\mathbf{a},d,m}$; in particular, this means that if we have $\tilde{\mathbf{a}}$ and $\tilde d$ satisfying the conditions of Theorem \ref{th:indecomp} and for which $\mathbf{a} \neq \tilde{\mathbf{a}}$, then $X_{\mathbf{a},d,m} \not\simeq X_{\tilde{\mathbf{a}},\tilde d,m}$. 

\subsection{Outline of paper}  In writing this paper, we have worked to make the presentation as self-contained and approachable --- and even as friendly --- as possible.  We feel, in fact, that this paper should be approachable for people with a  basic university-level knowledge of algebra.  

The next two sections help bring the reader up to speed on the basic vocabulary and background needed to move towards our ultimate goal.  In section \ref{se:normoperators} we investigate certain elements of $\Z[\sigma]$ that will play a crucial role in our results, computing their annihilators and developing a few technical results.  In section \ref{se:rmgmodules} we provide some basic results on $R_mG$-modules.  The proof of Theorem \ref{th:indecomp} commences in section \ref{se:indecomp.proof}.  We do this by first showing indecomposability in certain special cases. As we proceed into the general case, we study the behavior of elements of $X$ which satisfy particular conditions. Using these properties, we then proceed to show the indecomposability of $X$ inductively on $m$.  In section \ref{sec:uniqueness} we prove that $\mathbf{a}$ is a module-theoretic invariant of $X_{\mathbf{a},d,m}$.  In the final section of the paper, we discuss a few ways in which the conditions we give in Theorem \ref{th:indecomp} are necessary for indecomposability.

\section{Norm Operators}\label{se:normoperators}

In our study of the modules $X_{\mathbf{a},d,m}$, we will see that the norm operator (and some of its kin) play a key role in understanding their indecomposability.  In this section, we analyze these operators and their properties.

We continue to use the notation adopted in the previous section.  Additionally, for a set $S$ of elements in a ring $R$, we use the notation
$\langle S\rangle$ to denote the subring of $R$ generated by the elements of $S$.  We define several elements of $\Z[\sigma]$ and then consider their images under natural homomorphisms $\Z[\sigma]\to \Z G_s \to R_m G_s$ for $0\le s\le n$.

\begin{definition*}
For $0\le j\le i \leq n$ and $d \in U_1$, set
\begin{align*}
P(i,j) &:= \sum_{k=0}^{p^{i-j}-1} \sigma^{kp^j} &&& Q_d(i,j) &:= \sum_{k=0}^{p^{i-j}-1}
    (d^{p^j})^{p^{i-j}-1-k}(\sigma^{p^j})^k.
\end{align*}
\end{definition*}

Note that when $K/F$ is an extension so that $\Gal(K/F) = \langle \sigma \rangle \simeq \mathbb{Z}/p^n\mathbb{Z}$, then for an element $k \in K^\times$ we have $N_{K/F}(k) = \prod_{i=0}^{p^n-1} \sigma^i(k) = k^{\sum_{i=0}^{p^n-1} \sigma^i} = k^{P(n,0)}$.  In fact, if we let $K_i$ denote the intermediate extension of $K/F$ of degree $p^i$ over $F$, then by the same token one sees that for $k \in K_i^\times$ we have $N_{K_i/K_j}(k) = k^{P(i,j)}$.  It is for this reason that we call this family of polynomials \emph{norm operators}.

Clearly $Q_1(i,j)=P(i,j)$.  Note that since $d\in U_1$, we have
$Q_d(i,j)=P(i,j)\mod p\Z[\sigma]$.  Also observe that for $0\le k\le j\le i$, one has
\begin{align*}
    P(i,k) &= P(i,j)P(j,k) \\
    Q_d(i,k) &= Q_d(i,j)Q_d(j,k)
\end{align*}
as well as the following identities
\begin{align*}
    (\sigma^{p^j}-1)P(i,j) &= (\sigma^{p^i}-1) \\
    (\sigma^{p^j}-d^{p^j})Q_d(i,j) &= (\sigma^{p^i}-d^{p^{i}}).\\
\end{align*}
As a natural consequence of our convention $\sigma^{p^{-\infty}}=0$, one has $P(i,-\infty) =
Q_d(i,-\infty) = 1$ for all $i$.

Before demonstrating some more intricate results for the polynomials $P(i,j)$ and $Q_d(i,j)$, we will need to establish a few arithmetic results.

\begin{lemma}\label{le:upower}\index{Lemma \ref{le:upower}} 
    Suppose that $i\in \N$, and $d\in U_i$,
    and $j\ge 0$.  Then
    \begin{equation*}
        \begin{cases}
            d^{p^j}\in U_{i+j},
            &p>2 \text{\ or\ } i>1 \text{\ or\ } j=0\\
            d^{p^j}=1, &p=2, \ i=1, \ j>0, \ d=-1\\
            d^{p^j}\in U_{v+j},
            &p=2, \ i=1, \ j>0, \ d\in -U_v.
        \end{cases}
    \end{equation*}
	When additionally $d \not\in U_{i+1}$, then we have $d^{p^j} \not\in U_{i+j+1}$ in the first case; likewise if $d \not\in -U_{v+1}$ in the last case, then $d^{p^j} \not\in U_{v+j+1}$.
\end{lemma}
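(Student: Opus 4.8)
The plan is to reduce every assertion of the Lemma to a single computation of the $p$-adic valuation $v_p(d^{p^j}-1)$ in terms of $c:=v_p(d-1)$. The hypothesis $d\in U_i$ is precisely $c\ge i\ge 1$, and the supplementary hypotheses ``$d\notin U_{i+1}$'' and ``$d\notin -U_{v+1}$'' serve only to pin $c$ down exactly; the non-sharp inclusions for a smaller choice of $i$ or $v$ then come for free from $U_{c+j}\subseteq U_{i+j}$. In the last branch one may take $v\ge 2$, since $-U_1$ is nothing but the set of odd integers and the only substantive content there beyond the standing hypothesis $d\in U_1$ concerns $v\ge 2$ (an odd integer $d$ with $v_2(d+1)=1$ satisfies $d\equiv 1\pmod 4$, hence lies in $U_2$ and is already covered by the first case).

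Next I would establish the workhorse, a lifting-the-exponent step: \emph{if $c\ge 1$, and moreover $p$ is odd or $c\ge 2$, then $v_p(d^p-1)=c+1$.} I would prove this by factoring $d^p-1=(d-1)\sum_{k=0}^{p-1}d^k$ and checking that $v_p\bigl(\sum_{k=0}^{p-1}d^k\bigr)=1$. Writing $d=1+p^cu$ with $p\nmid u$ and expanding binomially, every term of $d^k$ of degree $\ge 2$ in $p^cu$ has valuation $\ge 2c\ge c+1$, so $d^k\equiv 1+kp^cu\pmod{p^{c+1}}$ and hence $\sum_{k=0}^{p-1}d^k\equiv p+p^cu\binom{p}{2}\pmod{p^{c+1}}$. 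When $p$ is odd, $\binom{p}{2}=p\cdot\frac{p-1}{2}$ is divisible by $p$, so the sum is $\equiv p\pmod{p^{c+1}}$, of valuation exactly $1$; when $p=2$ the sum is simply $1+d=2(1+2^{c-1}u)$, and $c\ge 2$ forces $2^{c-1}u$ to be even, so again the valuation is exactly $1$. Since after one step the valuation has grown to $c+1\ge 2$, the hypothesis persists, and iterating $j$ times gives $v_p(d^{p^j}-1)=c+j$. As $c\ge i$, this yields $d^{p^j}\in U_{i+j}$, with $d^{p^j}\notin U_{i+j+1}$ exactly when $c=i$; together with the trivial subcase $j=0$ (where $d^{p^j}=d\in U_i$) this settles the first case of the Lemma.

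For $p=2$, $i=1$, $j>0$, where the step above is unavailable, I would instead exploit that $2^j$ is even. If $d=-1$ then $d^{2^j}=1$, which lies in every $U_w$, so there is nothing to prove. If $d\in -U_v$ with $v\ge 2$, put $e:=-d\in U_v$, so that $d^{2^j}=e^{2^j}$; the previous paragraph applied to $e$ (with $v$ in the role of $i$, and $v\ge 2$) gives $e^{2^j}\in U_{v+j}$, and $e^{2^j}\notin U_{v+j+1}$ precisely when $v_2(e-1)=v$, i.e.\ when $d\notin -U_{v+1}$. Passing back through $d^{2^j}=e^{2^j}$ yields the remaining assertions.

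The point I expect to need the most care is the lifting step at the prime $2$: the clean identity $v_2(d^2-1)=v_2(d-1)+1$ genuinely fails when $v_2(d-1)=1$, since then $d^2-1=(d-1)(d+1)$ swallows the surplus powers of $2$ concealed in $d+1$ --- this is exactly why the hypotheses of the Lemma divert $p=2$, $i=1$ into the separate $d=-1$ and $d\in -U_v$ branches rather than the generic one. Beyond that, the only vigilance required is to keep ``membership in $U_\bullet$'' distinct from ``valuation exactly $\bullet$'' throughout the induction on $j$, and not to overlook the degenerate $v=1$ subcase of the last branch; everything else is routine binomial bookkeeping.
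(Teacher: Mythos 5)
Your proof is correct and follows essentially the same route as the paper's: an induction on $j$ whose single step is an elementary verification that $v_p(d^p-1)=v_p(d-1)+1$ under the stated hypotheses, with the $p=2$, $i=1$ situation split off into the $d=-1$ and $d\in -U_v$, $v\ge 2$, branches exactly as in the text. The only cosmetic differences are that you obtain the one-step valuation by factoring $d^p-1=(d-1)\sum_{k=0}^{p-1}d^k$ (the standard lifting-the-exponent computation) instead of expanding $(1+p^ix)^p$ binomially, and that you treat $d\in -U_v$ by passing to $e=-d$ and using that $2^j$ is even rather than squaring $-1+2^vx$ directly.
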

\begin{remark*}
     When $p=2$ and $i=1$, either $d=-1$ or $d\in -U_v\setminus
    -U_{v+1}$ for some $v\ge 2$.  Hence when
    $p=2$, $i=1$, and $j>0$, we always have $d^{p^j}\in U_{i+j+1}$.
\end{remark*}
\begin{proof}
    If $j=0$ the conclusion is identical to the hypothesis, so we
    may assume that $j>0$.  Assume first that $p>2$, and write
    $d=1+p^{i}x$ for $x\in \Z$.  Then we calculate
    \begin{equation*}
        d^p = (1+p^ix)^p = 1+p\cdot p^ix+\left(\begin{matrix}
            p\\2\end{matrix}\right)(p^ix)^2+\cdots+(p^ix)^p.
    \end{equation*}
    Observe that
    \begin{equation*}
        \left(\begin{matrix}p\\k\end{matrix}\right)(p^ix)^k
        = \begin{cases}
        &0\mod (p^{i+k+1}), \quad 2\le k<p, \ i\ge 2\\
        &0\mod (p^{i+k}), \quad \phantom{{}^{+1}}k=p, \ i\ge 2\\
        &0\mod (p^{i+k}), \quad \phantom{{}^{+1}}2\le k<p, \ i=1\\
        &0 \mod (p^{i+k-1}), \quad k=p, \ i=1.
        \end{cases}
    \end{equation*}
    Since $p>2$ we have $d^p = 1+p\cdot p^ix \bmod (p^{i+2})$ and we conclude
    that $d^p\in U_{i+1}$.  Moreover $d^p \not\in U_{i+2}$ if $d \not\in U_{i+1}$, since in this case $x \not\in p\Z$. Hence $d^{p^j}\in U_{i+j}$ for all $j\ge 1$ when $p>2$, with $d^{p^j} \not\in U_{i+j+1}$ if $d \not\in U_{i+1}$.

    Now suppose that $p=2$ and write $d=1+2^ix$ for some $x\in \Z$.  Then
    \begin{equation*}
        d^2 = (1+2^ix)^2 = 1+ 2\cdot 2^i x+ 2^{2i}x^2.
    \end{equation*}
    If $i\ge 2$, then $2i>i+1$ and so $d^2=1+2^{i+1}x \bmod (2^{2i})$, whence
    $d^{2}\in U_{i+1}$.  If $d\not\in U_{i+1}$, then we further have $d^2 \not\in U_{i+2}$ since $x \not\in 2\Z$.  This gives $d^{2^j}\in U_{i+j}$ for all $j\ge 1$ when $p=2$ and $i\ge 2$, and further that $d^{2^j} \not\in U_{i+j+1}$ if $d \not\in U_{i+1}$.

    We are left with the case $p=2$, $i=1$, and $j>0$.  If $d=-1$, then the
    conclusion is clear.  Otherwise let $d\in -U_v$ with
    $v\ge 2$.  We write $d=-1+2^vx$ for $x\in \Z$ and calculate
    \begin{equation*} (-1+2^vx)^2 = 1-2\cdot 2^vx+2^{2v}x^2. \end{equation*}
    Since $v\ge 2$, $2v>v+1$ and as before we have that $d^2\in
    U_{v+1}$; also as before, if $d \not\in -U_{v+1}$, then we have $d^2 \not\in -U_{v+2}$.  Hence $d^{2^j}\in U_{v+j}$
    for all $j\ge 1$ when $p=2$, $i=1$, $j>0$, and $d\neq -1$, with $d^{2^j} \not\in -U_{v+2}$ when $d \not\in -U_{v+1}$.
\end{proof}

One can be slightly more precise with the previous result.  We leave the following result to the reader; when we write $d = 1+p^i x$, it can be used to give a precise expression for the form of $d^{p^j}$ in terms of $x$.

\begin{lemma}\label{le:upowerpnot2}\index{Lemma \ref{le:upowerpnot2}}
    Suppose that $p>2$, $d\in U_i$, and $i\ge 1$.  Write $d=1+p^i x$.  Then there exist $f,g \in \Z$ so that
    \begin{equation*}
        d^{p^j} = 1+p^{i+j }x\left(1+fxp^i + gxp^{i+1}\right).
    \end{equation*}


    Now suppose that $p=2$ and $d\in U_i$ for $i\ge 1$.  Write $d=1+2^i x$.  Then there exists $c \in \Z$ so that
    \begin{equation*}
        d^{2^{j}} = 1+2^{i+j}x\left(1+2^{i-1}x + 2^ixc\right).
    \end{equation*}
    (Here we interpret $U_0=\Z$.)
\end{lemma}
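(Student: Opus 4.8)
The plan is to prove both formulas by induction on $j$, raising the previous case to the $p$-th power and invoking the divisibility of the binomial coefficients $\binom{p}{k}$ already used in the proof of Lemma~\ref{le:upower}. (One could instead expand $(1+p^{i}x)^{p^{j}}$ in a single step and use $v_p\binom{p^j}{k}=j-v_p(k)$, but the inductive route reuses machinery we already have.) It is worth noting at the outset that the two-term correction $1+fxp^i+gxp^{i+1}$ in the odd case is the same as a single term $1+xp^{i}c$ with $c\in\Z$, so the content of that statement is just the existence of such a $c$; splitting it into $f$ and $g$ costs nothing.

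For $p>2$ the base case $j=0$ is trivial (take $f=g=0$). For the inductive step, write $d^{p^j}=1+p^{i+j}xv$ with $v=1+fxp^i+gxp^{i+1}$ and expand
\[
d^{p^{j+1}}=(1+p^{i+j}xv)^{p}=1+p^{i+j+1}xv+\sum_{k=2}^{p}\binom{p}{k}p^{k(i+j)}x^{k}v^{k}.
\]
The first two terms already have the desired shape, so it suffices to show each summand with $k\ge 2$ is a multiple of $p^{i+j+1}x\cdot xp^{i}$. After peeling off $p^{i+j+1}x$ (and, for $3\le k\le p-1$, the factor of $p$ dividing $\binom{p}{k}$; for $k=2$ the factor of $p$ inside $\binom{p}{2}=\frac{p(p-1)}{2}$; for $k=p$ nothing extra), what remains carries a power $p^{(k-1)(i+j)}$ (or $p^{(p-1)(i+j)-1}$ when $k=p$) together with a factor $x^{k-1}$ with $k-1\ge 1$, so divisibility by $xp^{i}$ reduces to inequalities of the form $(k-1)(i+j)\ge i$, all of which hold because $i\ge 1$ (the $k=p$ case additionally using $2i\ge i+1$). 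Summing, the correction terms contribute $p^{i+j+1}x\cdot xp^{i}C$ for some $C\in\Z$, and the induction closes.

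For $p=2$ the appropriate base case is $j=1$: squaring directly gives $d^{2}=(1+2^{i}x)^{2}=1+2^{i+1}x+2^{2i}x^{2}=1+2^{i+1}x(1+2^{i-1}x)$, which is the claimed form with $c=0$. For the inductive step, write $d^{2^{j}}=1+2^{i+j}xu$ with $u=1+2^{i-1}x+2^{i}xc$ and square:
\[
d^{2^{j+1}}=(1+2^{i+j}xu)^{2}=1+2^{i+j+1}xu+2^{2(i+j)}x^{2}u^{2}.
\]
The decisive observation is the identity $2^{2(i+j)}x^{2}u^{2}=2^{i+j+1}x\cdot 2^{i}x\cdot(2^{j-1}u^{2})$, whose last factor is an integer precisely because $j\ge 1$; hence
\[
d^{2^{j+1}}=1+2^{i+j+1}x\bigl(1+2^{i-1}x+2^{i}x(c+2^{j-1}u^{2})\bigr),
\]
with new constant $c+2^{j-1}u^{2}\in\Z$.

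The main obstacle is purely the bookkeeping: one must check that every cross term produced by the $p$-th power (the $\binom{p}{k}$-terms when $p>2$, the single square term when $p=2$) carries enough $p$-divisibility to be absorbed into the $xp^{i}$-correction, and one must start the induction at the right place — $j=0$ for $p$ odd, but $j=1$ for $p=2$, since for $p=2$ the summand $2^{i-1}x$ genuinely cannot be removed. Everything else is a routine binomial computation, which is why we leave the details to the reader.
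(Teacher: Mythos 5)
Your proof is correct and follows essentially the same route as the paper's: induction on $j$, raising to the $p$-th power at each step and absorbing the higher binomial terms into the $xp^i$-correction (respectively, folding the single square term into the $2^ixc$-correction when $p=2$), with the same divisibility checks on $\binom{p}{k}$. Your remark that the $p=2$ induction must start at $j=1$ also matches the paper's treatment.
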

\comment{
\begin{proof}

    First assume $p>2$.  We prove the statement by induction on $b$.  For $b=1$ we calculate
    \begin{align*}
        x^p &= 1+p\cdot p^iu+\binom{p}{2}(p^iu)^2+\cdots+(p^iu)^p\\
            &= 1+p^{i+1}u \left(1+\left( \sum_{j=2}^{p-1} \binom{p}{j} u^{j-1}p^{(j-1)i-1}\right) +p^{(p-1)i-1} u^{p-1}\right)\\
            &= 1+p^{i+1}u \left(1 +  \binom{p}{2}up^{i-1} + \binom{p}{3}u^2p^{2i-1}+\cdots+ p^{(p-1)i} u^{p-1}\right)\\
            &= 1+p^{i+1}u \left(1 +  \frac{p-1}{2} up^{i} +\binom{p}{3} u^2p^{2i-1}+\cdots+ p^{(p-1)i} u^{p-1}\right).
    \end{align*}
	From this equation we can see that $x^p=1+p^{i+1}u\left(1+fup^i + gup^{i+1}\right)$ for appropriately chosen integers $f$ and $g$.
    
    Assume the statement holds for $b$ gives
    \begin{equation*}
        x^{p^{b+1}}= (x^{p^b})^p = \left(1+p^{i+b}uv\right)^p,
    \end{equation*}
    where $v = 1+fup^i + gup^{i+1}$ for some $f,g \in \Z$.
    Then applying the base case,
    \begin{equation*}
        x^{p^{b+1}}= 1+p^{i+b+1}uv\left(1+\tilde f uv p^{i+b} + \tilde g uv p^{i+b+1}\right).
    \end{equation*}
    Substituting $v = 1+fup^i + gup^{i+1}$ and expanding the last two factors shows that $x^{p^{b+1}}$ has the desired form.

    Now assume $p=2$.  We first show the statement for $b=1$. We calculate
    \begin{equation*}
        (1+2^iu)^2 = 1+2\cdot 2^iu + 2^{2i}u^2 = 1+2^{i+1}u(1+2^{i-1}u)
    \end{equation*}
    as desired.

    Assume the statement holds for $b$, we have
    \begin{equation*}
        x^{2^{b+1}}= (x^{2^b})^2 = (1+2^{i+b}uv)^2
    \end{equation*}
    where $v=1+2^{i-1}u+2^iuc$ as in the statement of the lemma.  Then applying the base case,
    \begin{equation*}
        x^{2^{b+1}}= 1+2^{i+b+1}uvw
    \end{equation*}
    for $w\in U_{i+b-1}$ of the form $1+2^{i+b-1}uv + 2^{i+b}uvc'$.  Multiplying the expressions for $v$ and $w$ verifies that $vw$ has the appropriate form. 
\end{proof}
}

We now turn to $P(i,j)$ and $Q_d(i,j)$. We start by determining some properties of these polynomials under various evaluation homomorphisms.  

\begin{definition*}
    For $d\in \Z$, we write $\phi_{d,n}$ for the homomorphism of additive groups
    $\Z G\to \Z$ induced by $\sigma^t\mapsto d^t$, $0\le t<p^n$. We will generally abbreviate this notation by writing $\phi_d$ in place of $\phi_{d,n}$, since the value of $n$ is implicit from the associated group $G$. The kernel is
    then the subgroup $\langle \sigma-d, \sigma^2-d^2, \dots, \sigma^{p^n-1} -
    d^{p^n-1}\rangle$, which is a subset of the ideal $\langle \sigma-d\rangle
    \subset \Z G$.

    Now if $m\in \N$ is such that $d^{p^n}=1 \bmod p^m$, we let
    $\phi_d^{(m)}:R_m G\to R_m$ be the induced map; this is a ring homomorphism
    with kernel precisely $\langle \sigma-d\rangle$.  Since
    $\phi_d^{(m)}(1)=1$, it is moreover an $R_m$-homomorphism.
\end{definition*}


\begin{lemma}\label{le:phi}\index{Lemma \ref{le:phi}}
    If $d \in U_1$ and $0\le j\leq i\le n$, then 
    \begin{equation*}
       \phi_d(P(i,j)) = 0 \mod p^{i-j}.
    \end{equation*}

    More precisely,
    \begin{enumerate}
        \item\label{it:phi2} If $p>2$, $d\in U_2$, or $j>0$, then
          for $0\le j<i$
        \begin{equation*}
            \phi_d(P(i,j)) = p^{i-j} \mod p^{i-j+1}.
        \end{equation*}
        \item\label{it:phidm1} If $p=2$, $d=-1$, $j=0$, and $i>0$, then
        \begin{equation*}
          \phi_d(P(i,0)) = 0.
        \end{equation*}
        \item\label{it:phi3} If $p=2$, $d\in -U_v\setminus -U_{v+1}$ for $v\ge 2$, $j=0$, and
          $i>0$, then
        \begin{equation*}
            \phi_d(P(i,0)) = 2^{i+v-1} \mod 2^{i+v}.
        \end{equation*}
    \end{enumerate}
\end{lemma}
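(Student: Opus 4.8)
The plan is to recognize $\phi_d(P(i,j))$ as a geometric sum and then read off its valuation and leading term from the arithmetic lemmas already established. Since $\phi_d$ carries $\sigma^{kp^j}$ to $d^{kp^j}=(d^{p^j})^k$, we have $\phi_d(P(i,j))=\sum_{k=0}^{p^{i-j}-1}(d^{p^j})^k$, and whenever $d^{p^j}\neq 1$ this telescopes to $\frac{(d^{p^j})^{p^{i-j}}-1}{d^{p^j}-1}=\frac{d^{p^i}-1}{d^{p^j}-1}$. Thus the whole computation reduces to comparing the $p$-adic valuations (and leading coefficients modulo $p$) of $d^{p^i}-1$ and $d^{p^j}-1$, which is precisely the content of Lemmas \ref{le:upower} and \ref{le:upowerpnot2}. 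The only $d$ with $d^{p^j}=1$ are $d=1$ (any $p$) and $d=-1$ with $p=2$ and $j\ge 1$; in both of these the sum equals $p^{i-j}$ on the nose, which already matches the assertion, so these degenerate cases may be set aside at the outset.

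First I would dispose of part \eqref{it:phi2} in the ``generic'' regime, meaning $p>2$, or else $p=2$ with $d\in U_2$. Writing $d=1+p^ex$ with $x\in\Z$ and $e$ the $p$-adic valuation of $d-1$ (so $p\nmid x$, assuming $d\neq 1$), Lemma \ref{le:upowerpnot2} yields $d^{p^t}-1=p^{e+t}x\,u_t$ for $t\in\{i,j\}$, where $u_t\equiv 1\pmod p$ --- for $p=2$ this uses $e\ge 2$, so the summand $2^{e-1}x$ appearing in that lemma is even and hence $u_t$ is odd. Dividing, $\phi_d(P(i,j))=p^{i-j}(u_i/u_j)$, and $u_i/u_j$ is a $p$-adic unit congruent to $1$ modulo $p$, which is exactly part \eqref{it:phi2}. (For $p=2$ one does not even need the sharper Lemma \ref{le:upowerpnot2}: by Lemma \ref{le:upower} each of $d^{2^i}-1$ and $d^{2^j}-1$ is a power of $2$ times an odd number, so their quotient is $2^{i-j}$ times a ratio of odd numbers, automatically $\equiv 2^{i-j}\pmod{2^{i-j+1}}$.) This last remark also handles the remaining subcase of \eqref{it:phi2}, namely $p=2$, $j\ge 1$, $d\notin U_2$: here $d=-1$ (already dealt with) or $d\in -U_v\setminus -U_{v+1}$ for some $v\ge 2$, and because $j\ge 1$ Lemma \ref{le:upower} gives $v_2(d^{2^i}-1)=v+i$ and $v_2(d^{2^j}-1)=v+j$, again forcing $\phi_d(P(i,j))\equiv 2^{i-j}\pmod{2^{i-j+1}}$.

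Parts \eqref{it:phidm1} and \eqref{it:phi3} are then short. For \eqref{it:phidm1}, $\phi_d(P(i,0))=\sum_{k=0}^{2^i-1}(-1)^k=0$ since $2^i$ is even when $i>0$. For \eqref{it:phi3}, the hypothesis $d\in -U_v\setminus -U_{v+1}$ with $v\ge 2$ gives $d\equiv -1\pmod 4$, hence $v_2(d-1)=1$, while Lemma \ref{le:upower} (applied to the exponent $2^i$, using $i>0$ and $d\notin -U_{v+1}$) gives $v_2(d^{2^i}-1)=v+i$; therefore $\phi_d(P(i,0))=\frac{d^{2^i}-1}{d-1}$ has $2$-adic valuation $v+i-1$, and writing numerator and denominator each as a power of $2$ times an odd number shows $\phi_d(P(i,0))\equiv 2^{i+v-1}\pmod{2^{i+v}}$. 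Finally, the opening estimate $\phi_d(P(i,j))\equiv 0\pmod{p^{i-j}}$ drops out of the three precise cases together with the trivial case $i=j$ (where $P(i,j)=1$): in \eqref{it:phi2} the valuation is exactly $i-j$, while in \eqref{it:phidm1} and \eqref{it:phi3} it is at least $i$.

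The one point that calls for genuine care is the $p=2$ analysis when $d$ lies near $-1$: one must keep straight which of the three regimes of Lemma \ref{le:upower} is operative ($d=-1$; $d\in -U_v\setminus -U_{v+1}$ with $j=0$; or $d\in -U_v$ with $j\ge 1$), and in the last of these it is cleanest to pass to $d^2\in U_{v+1}$ and fall back on the generic argument. I also expect the mild subtlety that the geometric-series identity is only available when $d^{p^j}\neq 1$ to deserve an explicit word, since the failure cases are exactly where the sum collapses to $p^{i-j}$; everything else is a routine application of that identity combined with the valuation bookkeeping supplied by Lemmas \ref{le:upower} and \ref{le:upowerpnot2}.
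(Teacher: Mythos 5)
Your proposal is correct and takes essentially the same route as the paper's proof: both reduce $\phi_d(P(i,j))$ to the geometric-series quotient $(d^{p^i}-1)/(d^{p^j}-1)$ (setting aside the degenerate case $d^{p^j}=1$, where the sum is $p^{i-j}$ outright) and then extract the valuation and leading term from Lemma~\ref{le:upower}. The only difference is organizational --- the paper uniformly writes $d^{p^j}=1+p^vx$ and applies Lemma~\ref{le:upower} once, while you anchor the expansion at $d$ itself and occasionally call on Lemma~\ref{le:upowerpnot2} --- which does not change the substance of the argument.
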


\begin{proof}
    (\ref{it:phi2}).  If $d^{p^j}=1$, then
    $\phi_d(P(i,j))=p^{i-j}$ and we are done.

    Otherwise, observe that if $p=2$, then $d^{p^j}\in U_2$ when either $d\in U_2$ or $j>0$.  Let $d^{p^{j}}=1+p^vx$ for $v\in \N$ and $x\in
    \Z\setminus p\Z$.  Then by Lemma~\ref{le:upower}\index{Lemma \ref{le:upower}}
    we calculate that $d^{p^i}=1+p^{v+i-j}x+p^{v+i-j+1}y$ for some
    $y\in \Z$.  Furthermore,
    \begin{equation*}
        \phi_d(P(i,j)) = \frac{(d^{p^j})^{p^{i-j}}-1}{d^{p^j}-1}.
    \end{equation*}
    Hence
    \begin{equation*}
        p^{v}x \phi_d(P(i,j)) = p^{v+i-j}x + p^{v+i-j+1}y
    \end{equation*}
    and so
    \begin{equation*}
        p^vx (\phi_d(P(i,j))-p^{i-j}) = p^v p^{i-j+1}y.
    \end{equation*}
    Dividing by $p^v$ we have
    \begin{equation*}
        x (\phi_d(P(i,j))-p^{i-j}) = 0\mod p^{i-j+1}.
    \end{equation*}
    Since $x\not\in p\Z$ we deduce
    \begin{equation*}
        \phi_d(P(i,j)) = p^{i-j}\mod p^{i-j+1}
    \end{equation*}
    and we have (\ref{it:phi2}).

    (\ref{it:phidm1}).  If $d=-1$, then a simple calculation yields $\phi_d(P(i,0))=0$.

    (\ref{it:phi3}).  Assume that $d\in -U_v\setminus -U_{v+1}$.
    Write $d=-1+2^vx$ for $x\in \Z\setminus 2\Z$ and $v\in \N$.
    By Lemma~\ref{le:upower}\index{Lemma \ref{le:upower}}, $v\ge 2$ and $d^{2^i} =
    1+2^{v+i}x+2^{v+i+1}y$ for some $y\in \Z$.  We have
    \begin{equation*}
        \phi_d(P(i,0))=\frac{d^{2^i}-1}{d-1}.
    \end{equation*}
    Hence
    \begin{equation*}
        (d-1)\phi_d(P(i,0))=d^{2^i}-1
    \end{equation*}
    and so
    \begin{equation*}
        (-2+2^vx)\phi_d(P(i,0)) = 2^{v+i}x + 2^{v+i+1}y.
    \end{equation*}
    Dividing by $2$ and using the fact that $1-2^{v-1}x\not\in
    2\Z$ and so is a unit in $\Z/2^{v+i}\Z$, we obtain
    \begin{equation*}
        \phi_d(P(i,0)) = 2^{v+i-1}\mod 2^{v+i},
    \end{equation*}
    as desired.

    In all cases the initial statement follows.
\end{proof}

We now move on to study the images of $P(i,j)$ and $Q_d(i,j)$ within $R_mG$.  Of primary interest to us are their annihilators and their image under the natural $R_m$-homomorphisms $\chi_j: R_mG \to R_m G_j$.

\begin{lemma}\label{le:separate}\index{Lemma \ref{le:separate}}
    For $m\in \N$, $0\le k\le m$, and $0\le j<i\le n$,
    \begin{equation*}
        \langle P(i,j)\rangle \cap \langle p^k\rangle =
        \langle p^kP(i,j)\rangle
    \end{equation*}
    as ideals in $R_mG_i$.
\end{lemma}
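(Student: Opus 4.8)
The plan is to obtain an explicit description of the principal ideal $\langle P(i,j)\rangle$ inside $R_mG_i$ and then read the statement off. The inclusion $\langle p^kP(i,j)\rangle \subseteq \langle P(i,j)\rangle \cap \langle p^k\rangle$ is immediate, so all the work is in the reverse inclusion.

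First I would record the structural fact driving everything: since $G_i$ has order $p^i$ and $j<i$, in $R_mG_i$ we have $\sigma^{p^i}=1$, hence $\sigma^{p^j}P(i,j)=P(i,j)$. Therefore, writing an arbitrary $g=\sum_{r=0}^{p^i-1}c_r\sigma^r\in R_mG_i$ and grouping the exponents $r$ by their residue $s$ modulo $p^j$, one gets
\[
P(i,j)g=\sum_{s=0}^{p^j-1}b_s\,P(i,j)\sigma^s,\qquad b_s:=\sum_{q=0}^{p^{i-j}-1}c_{qp^j+s}\in R_m .
\]
So $\langle P(i,j)\rangle=\bigl\{\sum_{s=0}^{p^j-1}b_sP(i,j)\sigma^s : b_s\in R_m\bigr\}$. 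Next I would note that in the standard free $R_m$-basis $\{\sigma^r:0\le r<p^i\}$ of $R_mG_i$, the element $P(i,j)\sigma^s$ has coefficient $1$ on precisely the $p^{i-j}$ basis vectors $\sigma^r$ with $r\equiv s\pmod{p^j}$ and $0$ elsewhere; these supports are pairwise disjoint for distinct $s$, so the above expression of an element of $\langle P(i,j)\rangle$ is unique and, crucially, each $b_s$ literally occurs among the $R_m$-coefficients of that element in the basis $\{\sigma^r\}$.

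Now take $\eta\in\langle P(i,j)\rangle\cap\langle p^k\rangle$ and write $\eta=\sum_{s=0}^{p^j-1}b_sP(i,j)\sigma^s$. Because $\eta\in p^kR_mG_i$, every $R_m$-coefficient of $\eta$ in the basis $\{\sigma^r\}$ lies in $p^kR_m$; by the previous paragraph this forces $b_s\in p^kR_m$ for all $s$, say $b_s=p^kb_s'$. Then $\eta=p^k\bigl(\sum_{s=0}^{p^j-1}b_s'\sigma^s\bigr)P(i,j)\in\langle p^kP(i,j)\rangle$, as required.

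The only step needing any care — and it is pure bookkeeping — is the identification of $\langle P(i,j)\rangle$ as a free $R_m$-module on $\{P(i,j)\sigma^s:0\le s<p^j\}$, i.e.\ the disjointness of supports; once that is in hand the rest is formal. (Alternatively, one can realize $R_mG_i$ as a free module of rank $p^j$ over the subring generated by $\tau:=\sigma^{p^j}$, identify that subring with $R_mG_{i-j}$ and $P(i,j)$ with the norm element $\sum_{t=0}^{p^{i-j}-1}\tau^t$, and reduce to the rank-one case $s=0$; but the direct computation above is shorter.)
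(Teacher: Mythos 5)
Your proof is correct and follows essentially the same route as the paper's: reduce a general multiplier of $P(i,j)$ to one supported on $\{1,\sigma,\dots,\sigma^{p^j-1}\}$ (the paper does this via the observation that $\sigma^{p^j}-1$ annihilates $P(i,j)$, you via grouping exponents mod $p^j$), then use the disjointness of the supports of $P(i,j)\sigma^s$ in the free $R_m$-basis $\{\sigma^r\}$ to conclude that membership in $p^kR_mG_i$ forces all coefficients into $p^kR_m$. No gaps.
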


\begin{proof}
    Clearly  $\langle p^kP(i,j)\rangle$ lies in the intersection.
    Consider then an arbitrary element $c\in \langle P(i,j)\rangle
    \cap \langle p^k\rangle$.
    Hence there exist $r, s\in R_mG_i$ such that
    \begin{equation*}
        c = p^k r = P(i,j)s.
    \end{equation*}
    Since $(\sigma^{p^j}-1)$ annihilates $P(i,j)$ in $R_mG_i$, we
    may assume without loss of generality that $s=\sum_{t=0}^{p^j-1}
    s_t\sigma^t$ for $s_t\in R_m$, $t=0, \dots, p^j-1$.  (More
    explicitly, we can replace $s$ by the result of taking an arbitrary
    lift of $s$ into $R_m[\sigma]$, dividing by $\sigma^{p^j}-1$,
    taking the remainder, and projecting back into $R_mG_i$.)

    Then
    \begin{equation*}
        P(i,j)s =\sum_{u=0}^{p^{i-j}-1} \sum_{t=0}^{p^j-1}
        s_t\sigma^{t+up^j}.
    \end{equation*}
    Since $R_mG_i$ is a free $R_m$-module with base $\{1, \sigma,
    \dots,\sigma^{p^i-1}\}$, we deduce that $P(i,j)s \not\in
    p^kR_mG_i$ unless $s\in p^kR_mG_i$.  Hence $c\in \langle
    p^kP(i,j)\rangle$.
\end{proof}

\begin{lemma}\label{le:kerbasic}\index{Lemma \ref{le:kerbasic}}
    Suppose $m\in \N$.
    For $0\le i\le n$ and $0\le k\le m$,
    \begin{equation*}
        \ann_{R_mG_i} p^k = \langle p^{m-k}\rangle.
    \end{equation*}

    For $0\le j<i\le n$ and $0\le k\le m$,
    \begin{equation*}
        \ann_{R_mG_i} p^k(\sigma^{p^j}-1)= \langle P(i,j),
        p^{m-k}\rangle.
    \end{equation*}
\end{lemma}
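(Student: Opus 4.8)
The plan is to prove both equalities by direct coordinate computations in the free $R_m$-module $R_mG_i$, whose standard basis is $\{1,\sigma,\dots,\sigma^{p^i-1}\}$.

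First, for $\ann_{R_mG_i}p^k = \langle p^{m-k}\rangle$: multiplication by $p^k$ acts coordinatewise on $R_mG_i = \bigoplus_{t} R_m\sigma^t$, so for $r = \sum_{t} r_t\sigma^t$ one has $p^k r = 0$ iff $p^k r_t = 0$ in $R_m$ for every $t$; since $\ann_{R_m}p^k = p^{m-k}R_m$ (a standard fact about $\Z/p^m\Z$, valid for $0\le k\le m$), this is equivalent to $r \in p^{m-k}R_mG_i = \langle p^{m-k}\rangle$. This case is immediate.

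For the second equality, I would first dispatch the inclusion $\langle P(i,j),p^{m-k}\rangle \subseteq \ann_{R_mG_i} p^k(\sigma^{p^j}-1)$: we have $p^k(\sigma^{p^j}-1)\cdot p^{m-k} = p^m(\sigma^{p^j}-1) = 0$, and $(\sigma^{p^j}-1)P(i,j) = \sigma^{p^i}-1 = 0$ in $R_mG_i$ (using the identity recorded just after the definition of $P$, together with $\sigma^{p^i}=1$ in $G_i$), so $p^k(\sigma^{p^j}-1)P(i,j) = 0$ as well. The substantive direction is the reverse inclusion. Given $r = \sum_{t=0}^{p^i-1} r_t\sigma^t$ with $p^k(\sigma^{p^j}-1)r = 0$, expanding gives $(\sigma^{p^j}-1)r = \sum_t (r_{t-p^j}-r_t)\sigma^t$ with indices taken mod $p^i$, so the hypothesis forces $r_{t-p^j}\equiv r_t \pmod{p^{m-k}}$ for all $t$. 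Hence the coefficient sequence $(r_t)$ is constant mod $p^{m-k}$ along each orbit of translation by $p^j$ on $\Z/p^i\Z$. Writing $t = t_0 + up^j$ with $0\le t_0 < p^j$ and $0\le u < p^{i-j}$, I can choose $c_{t_0}\in R_m$ and $s_{t_0,u}\in R_m$ with $r_{t_0+up^j} = c_{t_0} + p^{m-k}s_{t_0,u}$; summing over all basis elements and using $\sum_{u=0}^{p^{i-j}-1}\sigma^{up^j} = P(i,j)$ then yields $r = \bigl(\sum_{t_0=0}^{p^j-1} c_{t_0}\sigma^{t_0}\bigr)P(i,j) + p^{m-k}\sum_{t_0,u}s_{t_0,u}\sigma^{t_0+up^j}$, which lies in $\langle P(i,j),p^{m-k}\rangle$.

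I do not expect any real obstacle here; the only point to get right is the orbit bookkeeping for translation by $p^j$ on $\Z/p^i\Z$, namely that it has $p^j$ orbits each of size $p^{i-j}$, which is precisely what makes the partial sums $\sum_u \sigma^{up^j}$ reassemble into $P(i,j)$. As an alternative I could first reduce to the case $j=0$ by regarding $R_mG_i$ as a free module over the subring generated by $\sigma^{p^j}$, which is isomorphic to $R_mG_{i-j}$ via $\sigma^{p^j}\mapsto\sigma$ and carries $P(i,j)$ to $P(i-j,0)$; but the direct argument above is shorter, and that is what I would present.
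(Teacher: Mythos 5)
Your proposal is correct. The first statement is proved exactly as in the paper (coordinatewise over the free $R_m$-basis $\{1,\sigma,\dots,\sigma^{p^i-1}\}$), and the coefficient condition you derive for the second statement --- that $p^k(\sigma^{p^j}-1)r=0$ forces $r_{t-p^j}\equiv r_t \pmod{p^{m-k}}$ along each orbit of translation by $p^j$ on $\Z/p^i\Z$, after which the constant parts reassemble into a multiple of $P(i,j)=\sum_u\sigma^{up^j}$ and the remainders land in $\langle p^{m-k}\rangle$ --- is a complete argument; the orbit bookkeeping ($p^j$ orbits of size $p^{i-j}$) is right. Where you differ from the paper is in how the hard inclusion is organized: the paper first settles the case $k=0$, namely $\ann_{R_mG_i}(\sigma^{p^j}-1)=\langle P(i,j)\rangle$, and then for general $k$ writes $p^kr=P(i,j)s$, invokes Lemma~\ref{le:separate} (which says $\langle P(i,j)\rangle\cap\langle p^k\rangle=\langle p^kP(i,j)\rangle$) to upgrade this to $p^kr=p^kP(i,j)t$, and concludes $r-P(i,j)t\in\ann_{R_mG_i}p^k=\langle p^{m-k}\rangle$. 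That route is modular --- it reuses a lemma already established for other purposes --- whereas your single direct computation is self-contained and handles all $k$ uniformly, at the cost of redoing by hand the separation of the $P(i,j)$-part from the $p$-power part that Lemma~\ref{le:separate} packages. Either presentation is acceptable.
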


\begin{proof}
    Assume first that $0\le i\le n$ and $0\le k\le m$. Then since $R_mG_i$ is a free $R_m$-module, it follows that $\ann_{R_mG_i} p^k = \langle p^{m-k}\rangle$.

    We now consider the second statement.  Suppose additionally that $0\le j<i$.

    Suppose $k=0$.  Then, writing elements of $R_mG_i$ in terms of the $R_m$-base $\{1,\sigma,\dots,\sigma^{p^i-1}\}$, it follows easily that
    \begin{equation*}
        \ann_{R_mG_i} (\sigma^{p^j}-1) = \langle P(i,j) \rangle.
    \end{equation*}

    Now assume that $0\le k\le m$.  It is clear that $\langle P(i,j),p^{m-k}\rangle$ lies in $\ann_{R_mG_i} p^k (\sigma^{p^j}-1)$.  Assume then that $r \in \ann_{R_mG_i} p^k (\sigma^{p^j}-1)$. Then from the previous paragraph we have
    \begin{equation*}
        p^k r = P(i,j)s
    \end{equation*}
    for some $s\in R_mG_i$.  By Lemma~\ref{le:separate}\index{Lemma \ref{le:separate}}, $p^k r = p^k P(i,j)t$ for some $t\in R_mG_i$. Therefore $r-P(i,j)t$ lies in $\ann_{R_mG_i} p^k = \langle p^{m-k}\rangle$, and so
    \begin{equation*}
        r \in \langle P(i,j), p^{m-k} \rangle,
    \end{equation*}
    as desired.
\end{proof}

\begin{lemma}\label{le:phidb}\index{Lemma \ref{le:phidb}}
    Suppose $m\in \N$, $0\le i\le n$, and $d\in U_1$.  Then
    \begin{equation*}
        \ann_{R_mG_i} (\sigma-d) = \langle p^kQ_d(i,0)\rangle
    \end{equation*}
    where $k=\min\{v\ge 0 : p^v(d^{p^i}-1)=0 \mod p^m\}$.
\end{lemma}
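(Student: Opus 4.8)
The plan is a direct coordinate computation in $R_mG_i$, which is a free $R_m$-module on $\{1,\sigma,\dots,\sigma^{p^i-1}\}$. The case $i=0$ is immediate, since there $\sigma=1$, $Q_d(0,0)=1$, and the asserted equality is literally the definition of $k$; so assume $i\ge 1$. One containment is also essentially free: from the identity $(\sigma^{p^j}-d^{p^j})Q_d(i,j)=\sigma^{p^i}-d^{p^i}$ recorded earlier, taking $j=0$ and passing to $R_mG_i$ (where $\sigma^{p^i}=1$) gives $(\sigma-d)Q_d(i,0)=1-d^{p^i}$, hence $(\sigma-d)\cdot p^kQ_d(i,0)=p^k(1-d^{p^i})=0$ by the defining property of $k$. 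Since an annihilator is an ideal, this yields $\langle p^kQ_d(i,0)\rangle\subseteq\ann_{R_mG_i}(\sigma-d)$, so only the reverse inclusion requires work.

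For that, I would expand $(\sigma-d)r$ for a general $r=\sum_{t=0}^{p^i-1}r_t\sigma^t$, using $\sigma^{p^i}=1$, to obtain
\[
(\sigma-d)r=(r_{p^i-1}-dr_0)\cdot 1+\sum_{t=1}^{p^i-1}(r_{t-1}-dr_t)\sigma^t.
\]
Freeness over $R_m$ then shows $r\in\ann_{R_mG_i}(\sigma-d)$ if and only if $r_{t-1}=dr_t$ for $1\le t\le p^i-1$ together with $r_{p^i-1}=dr_0$. Since $d\in U_1$ is a unit of $R_m$, the first family of equations forces $r_t=d^{p^i-1-t}c$ with $c:=r_{p^i-1}$, that is $r=cQ_d(i,0)$, and the remaining equation then collapses to $c=d^{p^i}c$, i.e. $c(d^{p^i}-1)\equiv 0\pmod{p^m}$. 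Because $Q_d(i,0)$ has $\sigma^{p^i-1}$-coefficient equal to $1$, the assignment $c\mapsto cQ_d(i,0)$ is an injection $R_m\hookrightarrow R_mG_i$, so this identifies $\ann_{R_mG_i}(\sigma-d)$ with $\{\,cQ_d(i,0):c\in\ann_{R_m}(d^{p^i}-1)\,\}$.

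It remains to recognize $\ann_{R_m}(d^{p^i}-1)$ as $\langle p^k\rangle$ and to note that $p^kR_m\cdot Q_d(i,0)$ is already the full principal ideal $\langle p^kQ_d(i,0)\rangle$. The first is the chain-ring structure of $R_m=\Z/p^m\Z$: writing $d^{p^i}-1=p^au$ with $u$ a unit (and $a\ge m$ if it vanishes), $\ann_{R_m}(d^{p^i}-1)=\langle p^{\max(m-a,0)}\rangle$, and $\max(m-a,0)$ is precisely $k=\min\{v\ge 0:p^v(d^{p^i}-1)\equiv 0\pmod{p^m}\}$. The second holds because $\sigma$ acts on $p^kQ_d(i,0)$ as the scalar $d$: from $(\sigma-d)Q_d(i,0)=1-d^{p^i}$ we get $\sigma\cdot p^kQ_d(i,0)=d\cdot p^kQ_d(i,0)+p^k(1-d^{p^i})=d\cdot p^kQ_d(i,0)$, using once more that $p^k(1-d^{p^i})\equiv 0$; hence $R_mG_i\cdot p^kQ_d(i,0)=R_m\cdot p^kQ_d(i,0)$. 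Combining the two inclusions gives the lemma. I do not anticipate a genuine obstacle: the only care required is the bookkeeping of the degenerate cases ($i=0$, and $d^{p^i}\equiv 1\pmod{p^m}$, where $k=0$ and $\ann_{R_m}(d^{p^i}-1)=R_m$), together with using at the right moments that $d$ is a unit of $R_m$.
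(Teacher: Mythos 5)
Your proof is correct and follows essentially the same route as the paper: verify the easy containment via $(\sigma-d)Q_d(i,0)=1-d^{p^i}$, then expand $(\sigma-d)r=0$ in the $R_m$-basis $\{1,\sigma,\dots,\sigma^{p^i-1}\}$, solve the resulting recursion to get $r$ as an $R_m$-multiple of $Q_d(i,0)$ with coefficient annihilating $d^{p^i}-1$, and identify that annihilator with $\langle p^k\rangle$. The only (harmless) addition is your closing observation that $\sigma$ acts as the scalar $d$ on $p^kQ_d(i,0)$, which the paper does not need to state explicitly.
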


\begin{proof}
    Let $k$ be defined as in the lemma.  Observe that in $R_mG_i$,
    \begin{equation*}
        (\sigma-d)p^kQ_d(i,0) = p^k(\sigma^{p^i}-d^{p^i})
         = p^k(1-d^{p^i}) = 0
    \end{equation*}
    so $\langle p^kQ_d(i,0)\rangle \subset \ann_{R_mG_i}
    (\sigma-d)$.

    Now suppose
    \begin{equation*}
        r := \sum_{l=0}^{p^i-1} c_l\sigma^{l} \in
        \ann_{R_mG_i} (\sigma-d), \quad c_l\in R_m.
    \end{equation*}
    Expanding out $(\sigma-d)r=0$ and considering coefficients of
    the $R_m$-base $\{\sigma^l\}_{l=0}^{p^i-1}$ of $R_mG_i$ we
    obtain equations over $R_m$
    \begin{align*}
        c_{l-1}-dc_l&=0 \mod p^m, \quad 1\le l\le p^i-1\\
        c_{p^i-1}-dc_0&=0 \mod p^m.
    \end{align*}
    We deduce
    \begin{equation*}
        c_{p^i-l}=d^{l}c_0 \mod p^m, \quad 1\le l\le p^i.
    \end{equation*}
    In particular, $c_0=d^{p^i}c_0\mod p^m$.  Hence
    $0=(d^{p^i}-1)c_0\mod p^m$.

    Now write $c_0=p^wx$ for $0\le w\le m$ and $x\in \Z\setminus
    p\Z$.  Then $x$ is a unit modulo $p^m$ and so
    $(d^{p^i}-1)p^w=0\mod p^m$.  Hence $w\ge k$ and $c_0\in \langle
    p^k\rangle \subset R_m$. Moreover,
    \begin{equation*}
        r = \sum_{l=0}^{p^i-1} c_l\sigma^l
        =c_0d\sum_{l=0}^{p^i-1} d^{p^i-l-1}\sigma^{l}
        =c_0dQ_d(i,0)\in \langle p^kQ_d(i,0)\rangle,
    \end{equation*}
    as desired.
\end{proof}

\begin{lemma}\label{le:qhomo}\index{Lemma \ref{le:qhomo}}
    Suppose $m\in \N$ and $0\le j< i\le n$.  Suppose that $d\in
    U_1$.  Then under the natural $R_m$-homomorphism $\chi_j:
    R_mG\to R_mG_j$,
    \begin{equation*}
        \chi_j(Q_d(i,0)) = 0 \mod p^{i-j}R_mG_j.
    \end{equation*}

    Moreover,
    \begin{enumerate}
      \item\label{it:qh1} If $p>2$, $d\in U_2$, or $j>0$, then
      \begin{equation*}
          \chi_j(Q_d(i,0)) = p^{i-j} Q_d(j,0) \mod p^{i-j+1}R_mG_j.
      \end{equation*}
      \item\label{it:qh2} If $p=2$, $d=-1$, and $j=0$, then
      \begin{equation*}
          \chi_j(Q_d(i,0)) = 0.
      \end{equation*}
      \item\label{it:qh3} If $p=2$, $d\in -U_v\setminus -U_{v+1}$ for $v \geq 2$, and
        $j=0$, then
      \begin{equation*}
          \chi_j(Q_d(i,0))=2^{i+v-1} \mod p^{i+v}R_m G_j.
      \end{equation*}
    \end{enumerate}
\end{lemma}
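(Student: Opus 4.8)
The plan is to deduce all four assertions from Lemma~\ref{le:phi} by exploiting the factorization $Q_d(i,0) = Q_d(i,j)Q_d(j,0)$ recorded above. Since $\chi_j$ is a ring homomorphism, $\chi_j(Q_d(i,0)) = \chi_j(Q_d(i,j))\cdot\chi_j(Q_d(j,0))$, so the first step is to identify $\chi_j(Q_d(i,j))$. The generator $\bar\sigma$ of $G_j$ has order $p^j$, hence $\chi_j(\sigma^{p^j})=1$, and therefore each term $(d^{p^j})^{p^{i-j}-1-k}(\sigma^{p^j})^k$ of $Q_d(i,j)$ maps to the scalar $(d^{p^j})^{p^{i-j}-1-k}\in R_m$. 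Summing over $k$ and reindexing gives $\chi_j(Q_d(i,j)) = \sum_{k=0}^{p^{i-j}-1}(d^{p^j})^k$, which is exactly $\phi_d(P(i,j))$ by the definitions of $P(i,j)$ and $\phi_d$. Thus
\[
\chi_j(Q_d(i,0)) = \phi_d(P(i,j))\cdot Q_d(j,0) \quad\text{in } R_mG_j,
\]
where I identify $Q_d(j,0)$ with its coefficient-preserving image under $\chi_j$, legitimate because that polynomial has $\sigma$-degree less than $p^j$.

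With this identity in hand, the statements follow by substituting the conclusions of Lemma~\ref{le:phi}. The general congruence $\phi_d(P(i,j))\equiv 0\bmod p^{i-j}$ gives $\chi_j(Q_d(i,0))\equiv 0\bmod p^{i-j}R_mG_j$. For part (\ref{it:qh1}), the hypotheses ``$p>2$, $d\in U_2$, or $j>0$'' are precisely those of Lemma~\ref{le:phi}(\ref{it:phi2}), which yields $\phi_d(P(i,j))=p^{i-j}\bmod p^{i-j+1}$; multiplying by $Q_d(j,0)$ gives $\chi_j(Q_d(i,0))=p^{i-j}Q_d(j,0)\bmod p^{i-j+1}R_mG_j$. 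For part (\ref{it:qh2}), when $p=2$, $d=-1$, $j=0$ one has $Q_d(0,0)=1$ and Lemma~\ref{le:phi}(\ref{it:phidm1}) gives $\phi_d(P(i,0))=0$, so $\chi_0(Q_d(i,0))=0$. For part (\ref{it:qh3}), when $p=2$, $d\in -U_v\setminus -U_{v+1}$, $j=0$, Lemma~\ref{le:phi}(\ref{it:phi3}) gives $\phi_d(P(i,0))=2^{i+v-1}\bmod 2^{i+v}$, hence $\chi_0(Q_d(i,0))=2^{i+v-1}\bmod 2^{i+v}R_mG_0$.

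I do not expect a real obstacle here: the content is entirely in arranging the factorization step cleanly and checking that the case hypotheses match those of Lemma~\ref{le:phi} verbatim, which they do. The one point needing a line of care is the mod-$p^{i-j+1}$ refinement in part (\ref{it:qh1}): there one multiplies the congruence $\phi_d(P(i,j))\in p^{i-j}+p^{i-j+1}\Z$ by the ring element $Q_d(j,0)$, and must observe that the error term then lies in $p^{i-j+1}R_mG_j$ — immediate since the scalars $p^{i-j},p^{i-j+1}$ are central in $R_mG_j$.
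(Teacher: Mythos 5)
Your proposal is correct and follows essentially the same route as the paper: both reduce the computation to the identity $\chi_j(Q_d(i,0)) = \phi_d(P(i,j))\, Q_d(j,0)$ (the paper obtains the scalar by direct expansion as $\phi_{d^{p^j}}(P(i-j,0))$, which equals your $\phi_d(P(i,j))$, whereas you use the factorization $Q_d(i,0)=Q_d(i,j)Q_d(j,0)$ and the multiplicativity of $\chi_j$) and then all four assertions follow from Lemma~\ref{le:phi} exactly as you describe.
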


\begin{proof}
    We calculate
    \begin{align*}
        \chi_j(Q_d(i,0)) &= \chi_j(d^{p^{i}-1}+d^{p^{i}-2}\sigma +
        \cdots + \sigma^{p^{i}-1}) \\ &= (d^{p^{i}-p^j} +
        d^{p^{i}-2p^j} + \cdots+1) Q_d(j,0) \\ &=
        \phi_{(d^{p^j})}(P(i-j,0)) Q_d(j,0).
    \end{align*}
    The remainder follows from Lemma~\ref{le:phi}\index{Lemma \ref{le:phi}}.
\end{proof}

The balance of the section will be spent on two lemmas concerning annihilators of certain multiply-generated ideals in $R_mG_i$; they are somewhat technical, and the reader may choose to omit them on first pass.  
\begin{lemma}\label{le:kerint}\index{Lemma \ref{le:kerint}}
    Suppose $m\in \N$, $0\le i\le n$, and $t\in \N\cup\{0\}$.  Let
    $(b_j)_{j=0}^t$ be a decreasing sequence of elements of
    $\{0,\dots,m-1\}$ and $(c_j)_{j=0}^t$ be an increasing
    sequence of elements of $\{-\infty,0,\dots,i-1\}$.

   If $c_0=-\infty$, then
    \begin{multline*}
        \ann_{R_mG_i} \langle p^{b_0},
        \{p^{b_j}(\sigma^{p^{c_j}}-1)\}_{j=1}^t \rangle = \\
        \begin{cases}
        \langle p^{m-b_0}\rangle, & t=0 \\
        \langle p^{m-b_0}P(i,c_1),\cdots,p^{m-b_{t-1}}P(i,c_t),
        p^{m-b_t}\rangle, & t>0.
        \end{cases}
    \end{multline*}

    If $c_0\neq -\infty$, then
    \begin{multline*}
        \ann_{R_mG_i} \langle \{p^{b_j}(\sigma^{p^{c_j}}-1)\}_{j=0}^t
        \rangle =
        \\
        \begin{cases} \langle P(i,c_0),p^{m-b_0}\rangle, & t=0 \\
        \langle
        P(i,c_0),p^{m-b_0}P(i,c_1),\cdots,p^{m-b_{t-1}}P(i,c_t),
        p^{m-b_t}\rangle, & t>0. \\
        \end{cases}
    \end{multline*}
\end{lemma}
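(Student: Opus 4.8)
The plan is to prove both cases of Lemma~\ref{le:kerint} simultaneously by induction on $t$, building on the structural results already established in Lemmas~\ref{le:separate}, \ref{le:kerbasic}, and the factorization identities $P(i,c) = P(i,c')P(c',c)$ and $(\sigma^{p^c}-1)P(i,c) = \sigma^{p^i}-1$. First I would dispose of the base case $t=0$: when $c_0 = -\infty$ the single generator is the scalar $p^{b_0}$ (using the convention $\sigma^{p^{-\infty}} = 0$, so $\sigma^{p^{-\infty}}-1 = -1$ is a unit, which forces the generator to effectively be $p^{b_0}$), and the annihilator is $\langle p^{m-b_0}\rangle$ by the first assertion of Lemma~\ref{le:kerbasic}; when $c_0 \neq -\infty$ the single generator is $p^{b_0}(\sigma^{p^{c_0}}-1)$, and the annihilator is $\langle P(i,c_0), p^{m-b_0}\rangle$ by the second assertion of Lemma~\ref{le:kerbasic}.

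For the inductive step, suppose the result holds for sequences of length $t$ and consider a list of $t+1$ generators. Write $I = \langle p^{b_0}, \{p^{b_j}(\sigma^{p^{c_j}}-1)\}_{j=1}^{t} \rangle$ (respectively $I = \langle \{p^{b_j}(\sigma^{p^{c_j}}-1)\}_{j=0}^{t}\rangle$ in the case $c_0 \neq -\infty$) for the ideal generated by the first $t+1$ of our $t+2$ generators — wait, I need to recount indices. Concretely, peel off the \emph{last} generator $p^{b_t}(\sigma^{p^{c_t}}-1)$: an element $r$ annihilates the full list exactly when it annihilates the shorter list (of length $t$, ending at index $t-1$) \emph{and} annihilates $p^{b_t}(\sigma^{p^{c_t}}-1)$. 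By the inductive hypothesis the first condition says $r$ lies in an explicit ideal $J$ (the length-$t$ answer), and by Lemma~\ref{le:kerbasic} the second says $r \in \langle P(i,c_t), p^{m-b_t}\rangle$. So the task reduces to computing the intersection $J \cap \langle P(i,c_t), p^{m-b_t}\rangle$ and checking it equals the claimed length-$(t+1)$ ideal. Here the generator $P(i,c_{t-1})$-weighted term $p^{m-b_{t-1}}P(i,c_{t-1})$ in $J$ meets the new constraints: since $c_{t-1} \le c_t - 1 < c_t$, we may factor $P(i,c_{t-1}) = P(i,c_t)P(c_t,c_{t-1})$, so modulo $\langle P(i,c_t)\rangle$ everything in $J$ involving $P(i,c_{t-1})$ is already accounted for, and the genuinely new generator coming out of the intersection is $p^{m-b_t}P(i,c_t)$; the other terms $p^{m-b_j}P(i,c_{j+1})$ for $j < t-1$ survive because $c_{j+1} \le c_t$ means $P(i,c_{j+1})$ is a multiple of $P(i,c_t)$ too, so they already lie in $\langle P(i,c_t)\rangle$ and hence in the intersection. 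The scalar part $p^{m-b_t}$ interacts with $J$'s scalar part $p^{m-b_{t-1}}$ via $b_t < b_{t-1}$, so $p^{m-b_{t-1}}$ is already divisible by $p^{m-b_t}$ and contributes nothing new, while $p^{m-b_t}$ itself is a new generator.

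The main obstacle will be the intersection computation: verifying that $J \cap \langle P(i,c_t), p^{m-b_t}\rangle$ is \emph{generated} by the claimed list and contains nothing more. The containment $\supseteq$ is routine once one checks each proposed generator lies in both $J$ and the new ideal. For $\subseteq$, the clean approach is to take $r \in J$ and reduce it modulo the proposed generators using the $p$-adic/$(\sigma-1)$-adic normal form for elements of $R_mG_i$ (the free $R_m$-basis $\{1,\sigma,\dots,\sigma^{p^i-1}\}$, reorganized via powers of $\sigma^{p^{c_t}}-1$ as in the proof of Lemma~\ref{le:separate}); writing $r$ in terms of the $J$-generators and then imposing divisibility by $P(i,c_t)$ and $p^{m-b_t}$, one checks coefficient by coefficient — exactly the kind of bookkeeping done in Lemma~\ref{le:separate} — that any such $r$ can be rewritten over the claimed generating set. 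The decreasing/increasing hypotheses on $(b_j)$ and $(c_j)$ are precisely what make these divisibility comparisons go through cleanly, so the inductive structure is robust; I would be careful to phrase the normal-form reduction once in a reusable way rather than repeating it at each inductive level.
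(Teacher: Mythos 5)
Your overall strategy---induction on $t$ with base case Lemma~\ref{le:kerbasic}, then peeling off the last generator so that the inductive step becomes the computation of $I_{t-1}\cap\ann_{R_mG_i}\bigl(p^{b_t}(\sigma^{p^{c_t}}-1)\bigr)=I_{t-1}\cap\langle P(i,c_t),p^{m-b_t}\rangle$---is exactly the paper's, and your verification of the easy containment (each claimed generator lies in both ideals, via $P(i,c_j)=P(i,c_t)P(c_t,c_j)$ and the monotonicity of the $b_j$) matches the paper's as well.

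Two things need repair. First, the indexing in your description of the intersection is off: the genuinely new generator is $p^{m-b_{t-1}}P(i,c_t)$, not $p^{m-b_t}P(i,c_t)$ (and it is $p^{m-b_t}$ that is a multiple of $p^{m-b_{t-1}}$, not the reverse); taken literally, your list generates a strictly smaller ideal than the one asserted in the lemma. Second, and more substantively, the hard containment---which you correctly identify as the main obstacle---is left as ``coefficient-by-coefficient bookkeeping,'' but a direct normal-form attack on a general element of $I_{t-1}$ (an ideal with up to $t+1$ generators) is not how this closes. The paper's maneuver is: take $r$ in the annihilator, use the inductive hypothesis to write it over the generators of $I_{t-1}$, observe that every generator of $I_{t-1}$ except the scalar $p^{m-b_{t-1}}$ already lies in $I_t$, and hence reduce to $r=p^{m-b_{t-1}}s$; then the new annihilation condition gives $p^{m-b_{t-1}+b_t}s\in\ann_{R_mG_i}(\sigma^{p^{c_t}}-1)=\langle P(i,c_t)\rangle$, Lemma~\ref{le:separate} upgrades this to $p^{m-b_{t-1}+b_t}s\in\langle p^{m-b_{t-1}+b_t}P(i,c_t)\rangle$, and $\ann_{R_mG_i}p^{b_t}=\langle p^{m-b_t}\rangle$ then yields $r\in\langle p^{m-b_{t-1}}P(i,c_t),p^{m-b_t}\rangle\subset I_t$. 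You cite Lemma~\ref{le:separate} as the relevant tool, so the missing step is within reach, but as written the proposal does not contain the argument that actually closes the induction.
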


\begin{proof}
    We prove the result by induction on $t$.  The base case $t=0$ is Lemma~\ref{le:kerbasic}\index{Lemma \ref{le:kerbasic}}.  Assume then that for some $t\ge 1$, the result holds for all $l<t$.  We prove the result holds for $t$ as well.

    To simplify our discussion, for $0 \le v \le t$ we denote
    \begin{equation*}
    A_v := \begin{cases} 
    	\ann_{R_mG_i} \langle p^{b_0}, \{p^{b_j}(\sigma^{p^{c_j}}-1)\}_{j=1}^{v}\rangle, &c_0 = -\infty \\ 		 
		\ann_{R_mG_i}  \langle \{p^{b_j} (\sigma^{p^{c_j}}-1)\}_{j=0}^{v} \rangle, &c_0 \neq -\infty	 
    \end{cases}
    \end{equation*}
and 
    \begin{equation*}
        I_v :=
        \begin{cases}
            \langle p^{m-b_0}\rangle, & c_0=-\infty, v=0 \\
            \langle p^{m-b_0}P(i,c_1),\cdots,p^{m-b_v}\rangle, &
            c_0=-\infty, v>0 \\
            \langle P(i,c_0),p^{m-b_0}\rangle, & c_0\neq -\infty,
            v=0 \\
            \langle P(i,c_0), p^{m-b_0}P(i,c_1),
            \cdots,p^{m-b_v}\rangle, & c_0\neq -\infty, v>0
        \end{cases}
    \end{equation*}
    where the ideals are those given in the statement of the lemma.
    By induction $A_{t-1} = I_{t-1}$, and we claim that $A_t=I_t$.

    We start by showing $I_t \subset A_t$.  Since the $(b_j)$ are decreasing by hypothesis, we have $I_t \subset I_{t-1}$, and so $I_t \subset A_{t-1}$ as well.  Since $A_t =  A_{t-1} \cap \ann_{R_mG_i} \left(p^{b_t}(\sigma^{p^{c_t}}-1)\right)$, this means that we only need to verify that $I_t \subset \left(\ann_{R_mG_i} p^{b_t}(\sigma^{p^{c_t}}-1)\right)$; we show that each of the generators of $I_t$ is in this set.  This is obvious for the last generator of $I_t$, since $p^{m-b_t} \in \ann_{R_mG_i} p^{b_t}\subset \ann_{R_mG_i}p^{b_t}(\sigma^{p^{c_t}}-1)$.  The remaining generators of $I_t$ each contain a factor of the form $P(i,c_j)$ for $0 \leq j \leq t-1$.  Since the $(c_j)$ are increasing this implies $P(i,c_j) \in \ann_{R_mG_i}(\sigma^{p^{c_t}}-1) \subset \ann_{R_mG_i}p^{b_t}(\sigma^{p^{c_t}}-1)$, which in turn shows the remaining generators of $I_t$ also lie in the desired annihilator.  Hence $I_t \subset A_t$.

    Now we show that $A_t\subset I_t$.  Let $r\in A_t$. Now
    $A_t\subset A_{t-1}$ by definition, and $A_{t-1}=I_{t-1}$.
    So $r\in I_{t-1}$.

    Assume first that $t=1$.  Since $r \in I_{t-1}$ we have
    \begin{equation*}
        r = \begin{cases}
            p^{m-b_0}f_0, \ f_0\in R_mG_i, &c_0=-\infty\\
            P(i,c_0)f_0+p^{m-b_0}f_1, \ f_0, f_1\in R_mG_i, &c_0\neq
            -\infty.
        \end{cases}
    \end{equation*}
    In the case $c_0\neq -\infty$, we see that $P(i,c_0)f_0\in I_t$.
    Hence we may assume without loss of generality that if $t=1$ we
    have $r=p^{m-b_0}s$ for $s\in R_mG_i$.

    Now suppose that $t>1$. Again using the fact that $r \in I_{t-1}$, we have
    \begin{equation*}
        r = \begin{cases}
            p^{m-b_0}P(i,c_1)f_0+\cdots+p^{m-b_{t-1}}f_{t-1}, \ f_k\in
            R_mG_i, &c_0=-\infty\\
            P(i,c_0)f_0+\cdots+p^{m-b_{t-1}}f_t, \ f_k\in R_mG_i,
            &c_0\neq -\infty.
        \end{cases}
    \end{equation*}
    We see that all but the last summands in the expression for $r$
    are already contained in $I_t$. Hence we may assume without loss
    of generality that if $t>1$ we have $r=p^{m-b_{t-1}}s$ for $s\in
    R_mG_i$.

    In all cases, then, we have obtained $r=p^{m-b_{t-1}}s$ for $s\in
    R_mG_i$. Since $r\in A_t \subset \ann_{R_mG_i} p^{b_t}(\sigma^{p^{c_t}}-1)$,
    we have
    \begin{equation*}
        p^{b_t}r = p^{m-b_{t-1}+b_t}s\in \ann_{R_mG_i} (\sigma^{p^{c_t}}-1)
        = \langle P(i,c_t)\rangle,
    \end{equation*}
    by Lemma~\ref{le:kerbasic}\index{Lemma \ref{le:kerbasic}}.  
    Lemma~\ref{le:separate}\index{Lemma \ref{le:separate}} implies $p^{m-b_{t-1}+b_t}s \in \langle p^{m-b_{t-1}+b_t}P(i,c_t)\rangle$, and so another application of Lemma~\ref{le:kerbasic}\index{Lemma \ref{le:kerbasic}} gives $$r \in \langle p^{m-b_{t-1}}P(i,c_t),p^{m-b_t}\rangle$$ as desired.
\end{proof}

For the next lemma, denote by $\bar r\in R_{m-1}G_i$ the image of
$r\in R_mG_i$ under the natural $R_m$-homomorphism.  We show that if an element's image in $R_{m-1}G_i$ sits in a particular annihilator, then it can be expressed as a combination of certain norm operators.  Though quite technical, this result will be useful to us later.

\begin{lemma}\label{le:tech}\index{Lemma \ref{le:tech}}
    Suppose $m\ge 2$, $0\le i\le n$, and $t\in \N\cup \{0\}$.
    Let $\{b_j\}_{j=0}^t$ be a decreasing sequence of elements of
    $\{0,\dots,m-2\}$ and $\{c_j\}_{j=0}^t$ be an increasing
    sequence of elements of $\{-\infty,0,\dots,i-1\}$.  Assume further that $t>0$ when $c_0=-\infty$.

    Assume additionally that
    \begin{enumerate}
        \item\label{it:t2} $d^{p^i}\in U_m$;
        \item\label{it:t3} $b_t=0$;
        \item\label{it:t4} $\phi_d^{(m)}(P(i,c_j))\in p^{1+b_{j-1}}R_m$ for all $0 < j\le t$ with $c_j\neq -\infty$, and that $\phi_d^{(m)}(P(i,c_0)) =0$ if $c_0 \neq -\infty$.
    \end{enumerate}

    For $r\in R_{m}G_i$, suppose $\overline{(\sigma-d)r} \in R_{m-1}G_i$ satisfies
    \begin{equation*}
        \overline{(\sigma-d)r}\in
        \begin{cases}
            \ann_{R_{m-1}G_i} \langle p^{b_0}, \{
            p^{b_j}(\sigma^{p^{c_j}}-1)\}_{j=1}^t\rangle, &
            c_0=-\infty\\
            \ann_{R_{m-1}G_i} \langle
            \{p^{b_j}(\sigma^{p^{c_j}}-1)\}_{j=0}^t \rangle, &
            c_0\neq -\infty.
        \end{cases}
    \end{equation*}
    Then in $R_mG_i$ we have
    \begin{equation*}
        (\sigma-d)r\in \\
        \begin{cases}
            \langle p^{m-1-b_0}P(i,c_1),\cdots,& \\
            \quad p^{m-1-b_{t-1}}P(i,c_t),
            p^{m-1}(\sigma-d)\rangle, & c_0 = -\infty\\
            \langle P(i,c_0),p^{m-1-b_0}P(i,c_1),\cdots,& \\
            \quad p^{m-1-b_{t-1}}P(i,c_t), p^{m-1}(\sigma-d)\rangle,
            & c_0\neq -\infty.
        \end{cases}
    \end{equation*}
\end{lemma}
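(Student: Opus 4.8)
The conclusion is an assertion about the ideal $\langle (\sigma-d)r\rangle$ — or rather about membership of $(\sigma-d)r$ in a specific finitely generated ideal of $R_mG_i$. The natural strategy is to lift the given information from $R_{m-1}G_i$ back up to $R_mG_i$. Write $\overline{(\sigma-d)r}$ for the image of $(\sigma-d)r$ in $R_{m-1}G_i$; the hypothesis is precisely that this image lies in the annihilator ideal computed in Lemma~\ref{le:kerint} (applied with $m$ replaced by $m-1$). So by that lemma,
\begin{equation*}
\overline{(\sigma-d)r} \in
\begin{cases}
\langle p^{m-1-b_0}P(i,c_1),\dots,p^{m-1-b_{t-1}}P(i,c_t), p^{m-1-b_t}\rangle, & c_0=-\infty,\\
\langle P(i,c_0),p^{m-1-b_0}P(i,c_1),\dots,p^{m-1-b_{t-1}}P(i,c_t), p^{m-1-b_t}\rangle, & c_0\neq -\infty,
\end{cases}
\end{equation*}
in $R_{m-1}G_i$. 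Since $b_t=0$ by hypothesis \eqref{it:t3}, the last generator is $p^{m-1}$. Lifting a relation from $R_{m-1}G_i$ to $R_mG_i$ costs an error term in $p^{m-1}R_mG_i$, i.e.\ there is some $w\in R_mG_i$ with
\begin{equation*}
(\sigma-d)r = \Big(\text{an $R_mG_i$-combination of } P(i,c_0) \text{ (if } c_0\ne-\infty\text{)} \text{ and the } p^{m-1-b_{j-1}}P(i,c_j)\Big) + p^{m-1}w.
\end{equation*}
All the "main" terms are already in the target ideal; the job is to show the leftover $p^{m-1}w$ lies in the target ideal as well — and the target ideal has $p^{m-1}(\sigma-d)$ rather than $p^{m-1}$ as its last generator, so we must show $p^{m-1}w$ is a combination of $p^{m-1}(\sigma-d)$ and the $P(i,c_j)$-terms.

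**The core computation.** Because $(\sigma-d)r$ equals a main term plus $p^{m-1}w$, and the main term is divisible (in the appropriate ideal-theoretic sense) by $P(i,c_j)$'s, apply $\phi_d^{(m)}$ to the whole equation. We have $\phi_d^{(m)}((\sigma-d)r)=0$ since $\phi_d^{(m)}$ kills $\sigma-d$; so $\phi_d^{(m)}$ of the main term plus $\phi_d^{(m)}(p^{m-1}w)$ is zero in $R_m$. Hypothesis \eqref{it:t4} says $\phi_d^{(m)}(P(i,c_j)) \in p^{1+b_{j-1}}R_m$, so $\phi_d^{(m)}$ of the term $p^{m-1-b_{j-1}}P(i,c_j)f$ lies in $p^{m-1-b_{j-1}}p^{1+b_{j-1}}R_m = p^m R_m = 0$; and $\phi_d^{(m)}(P(i,c_0))=0$ when $c_0\ne-\infty$ by the same hypothesis. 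Thus $\phi_d^{(m)}$ of the main term is $0$, forcing $\phi_d^{(m)}(p^{m-1}w)=0$, i.e.\ $p^{m-1}\phi_d^{(m)}(w)=0$ in $R_m$, i.e.\ $\phi_d^{(m)}(w) \in pR_m$ (using that $\phi_d^{(m)}(w)\in R_m=\Z/p^m\Z$ and $p^{m-1}\cdot(\text{something})=0$ forces that something to be in $p\Z/p^m\Z$). Now $\ker\phi_d^{(m)} = \langle \sigma-d\rangle$ in $R_mG_i$, so $\phi_d^{(m)}(w)\in pR_m$ means we can write $w = (\sigma-d)w' + pw''$ for $w''\in R_mG_i$ with $\phi_d^{(m)}(w'')$ chosen freely — more precisely, $w - (\text{lift of }\phi_d^{(m)}(w)/p)\cdot p$ lies in $\langle \sigma-d\rangle$, so $w \in \langle \sigma-d\rangle + pR_mG_i$. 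Multiplying by $p^{m-1}$: $p^{m-1}w \in p^{m-1}\langle\sigma-d\rangle + p^m R_mG_i = \langle p^{m-1}(\sigma-d)\rangle$ (the $p^mR_mG_i$ term vanishes). This is exactly the last generator of the target ideal, completing the containment.

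**Organization and the main obstacle.** I would structure the proof as: (1) invoke Lemma~\ref{le:kerint} over $R_{m-1}G_i$ to get the explicit form of $\overline{(\sigma-d)r}$; (2) lift to $R_mG_i$, absorbing the lift error into $p^{m-1}R_mG_i$ and noting all $P(i,c_j)$-terms are already in the target ideal; (3) apply $\phi_d^{(m)}$, use hypothesis \eqref{it:t4} to kill the $P$-terms' contribution and hypothesis \eqref{it:t2} ($d^{p^i}\in U_m$) to ensure $\phi_d^{(m)}$ is defined on $R_mG_i$; (4) conclude $\phi_d^{(m)}(w)\in pR_m$, hence $w\in\langle\sigma-d\rangle+pR_mG_i$, hence $p^{m-1}w\in\langle p^{m-1}(\sigma-d)\rangle$; (5) assemble. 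The case split on whether $c_0=-\infty$ is cosmetic — the argument is identical, one just has or doesn't have the extra generator $P(i,c_0)$ (and when $c_0=-\infty$ the hypothesis $t>0$ guarantees there is at least one $P(i,c_j)$-style generator, matching the statement). I expect the main subtlety to be bookkeeping in step (2)/(3): one must be careful that when a generator of the $R_{m-1}$-ideal is $p^{m-1-b_{j-1}}P(i,c_j)$, its coefficient $f$ in the lift is an arbitrary element of $R_mG_i$, so $\phi_d^{(m)}$ of that product is $p^{m-1-b_{j-1}}\phi_d^{(m)}(P(i,c_j))\phi_d^{(m)}(f)$, and one needs $p^{m-1-b_{j-1}}\cdot p^{1+b_{j-1}} = p^m \equiv 0$ — this is where hypothesis \eqref{it:t4}'s precise exponent $1+b_{j-1}$ (indexed by $b_{j-1}$, not $b_j$!) is exactly what's needed, so matching the index shifts correctly is the one place to be vigilant.
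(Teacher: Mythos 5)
Your proposal is correct and follows essentially the same route as the paper's proof: apply Lemma~\ref{le:kerint} over $R_{m-1}G_i$, lift to $R_mG_i$ with a $p^{m-1}$-error term, apply $\phi_d^{(m)}$ (defined thanks to hypothesis~(\ref{it:t2})) and use hypothesis~(\ref{it:t4}) with the exponent cancellation $p^{m-1-b_{j-1}}\cdot p^{1+b_{j-1}}=p^m=0$ to force $p^{m-1}\phi_d^{(m)}(f_{t+1})=0$, then rewrite $f_{t+1}\in\langle\sigma-d\rangle+pR_mG_i$ so that $p^{m-1}f_{t+1}\in\langle p^{m-1}(\sigma-d)\rangle$. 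The index-shift bookkeeping you flag as the main subtlety is exactly the point the paper's hypothesis~(\ref{it:t4}) is engineered for, and you handle it correctly.
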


\begin{proof}
    Assume first that $c_0\neq -\infty$. By Lemma~\ref{le:kerint}\index{Lemma \ref{le:kerint}}
    for $R_{m-1}G_i$, using $b_t=0$ from item~(\ref{it:t3}), we have
    \begin{equation*}
        \overline{(\sigma-d)r} \in \langle
        \overline{P(i,c_0)}, \overline{p^{m-1-b_0}P(i,c_1)},
        \dots, \overline{p^{m-1-b_{t-1}}P(i,c_t)}\rangle
    \end{equation*}
    and so for some $f_0,\dots,f_{t+1}\in R_{m}G$ it follows that
   $$
        (\sigma-d)r = f_0P(i,c_0) + \left(\sum_{j=1}^t
        f_jp^{m-1-b_{j-1}}P(i,c_j)\right) + f_{t+1}p^{m-1}.
   $$

    By item~(\ref{it:t2}) we have the $R_m$-ring homomorphism
    $\phi_d^{(m)}$.  Then using item~(\ref{it:t4}) we have
    \begin{equation*}
        0 = p^{m-1}\phi_d^{(m)}(f_{t+1}).
    \end{equation*}
    By Lemma~\ref{le:kerbasic}\index{Lemma \ref{le:kerbasic}}, $f_{t+1}=(\sigma-d)g_{t+1}+ph_{t+1}$
    for $g_{t+1},h_{t+1}\in R_mG_i$ and we conclude that
    \begin{equation*}
        (\sigma-d)r \in \langle
        P(i,c_0),p^{m-1-b_0}P(i,c_1),\cdots,p^{m-1-b_{t-1}}P(i,c_t),
        p^{m-1}(\sigma-d)\rangle,
    \end{equation*}
    as desired.

    Now assume that $c_0= -\infty$. By Lemma~\ref{le:kerint}\index{Lemma \ref{le:kerint}},
    using $b_t=0$ from item~(\ref{it:t3}), we have
    \begin{equation*}
        \overline{(\sigma-d)r} \in \langle
        \overline{p^{m-1-b_0}P(i,c_1)}, \dots,
        \overline{p^{m-1-b_{t-1}}P(i,c_t)}\rangle
    \end{equation*}
    and so for some $f_1,\dots,f_{t+1}\in
        R_{m}G$ we have
    \begin{align*}
        (\sigma-d)r = \left(\sum_{j=1}^t f_jp^{m-1-b_{j-1}}
        P(i,c_j)\right) + f_{t+1}p^{m-1}.
    \end{align*}
    Just as before, by item~(\ref{it:t4}) we have
    \begin{equation*}
        0 = p^{m-1}\phi_d^{(m)}(f_{t+1})
    \end{equation*}
    and hence
    \begin{equation*}
        (\sigma-d)r \in \langle
        p^{m-1-b_0}P(i,c_1),\cdots,p^{m-1-b_{t-1}}P(i,c_t),
        p^{m-1}(\sigma-d)\rangle.
    \end{equation*}
\end{proof}

\section{$R_mG$-Modules}\label{se:rmgmodules}

In this section we consider some of the module-theoretic ingredients that will be used as we consider the indecomposability of $X_{\mathbf{a},d,m}$.

For all $m\in \N$ and $0\le i\le n$, the ring $R_mG_i$ is a local ring with unique maximal ideal $\Ic_{m,i} = \langle p,(\sigma-1)\rangle$. When the context is clear we abbreviate $\Ic_{m,i}$ by $\Ic$.  Since there is a unique maximal ideal, the Jacobson radical of $R_mG_i$ is simply $\mathcal{I}$ itself.  With this in mind, we have a special case of Nakayama's lemma that will prove extremely useful for the balance of the paper: if $M$ is a module of $R_mG_i$ such that $\mathcal{I}M = M$, then $M$ is trivial.  

We have that $U(R_mG_i)=R_mG_i\setminus \Ic$ and $U(R_m)\subset U(R_mG_i)$ for all $m\in \N$ and $0\le i\le n$. Recall that all cyclic modules over $R_mG_i$ are indecomposable, as follows.  If $M\neq \{0\}$ is cyclic, then $M$ is generated by a single element, and so $M/\Ic M\simeq \Fp$.  But if $M=A\oplus B$, then $M/\Ic M\simeq A/\Ic A\oplus B/\Ic B$.  Therefore without loss of generality $A/\Ic A=\{0\}$, and then by Nakayama's Lemma we have $A=\{0\}$.

For a $G$-module $M$ we write $M^G$ for the submodule of $M$
consisting of elements fixed by $G$.

If $H\le G$ are subgroups, it is clear that an $R_mG$-module $M$ is an $R_mH$-module.  On the other hand, if $H \le G$, then an $R_m(G/H)$-module $M$ is naturally an $R_mG$-module.  Let $H_i$ be the subgroup of $G$ of order $p^{n-i}$ so that $G_i=G/H_i$ is the quotient group of order $p^i$.  Then an $R_mG_i$-module is naturally an $R_mG$-module.

If $M=M^G$ we say that $M$ is a trivial $G$-module, and if an $R_mG$-module $M$ satisfies $M=M^G$, we say that $M$ is a trivial $R_mG$-module.  In particular, an $R_mG_0=R_m\{1\}$-module is a $R_mG$-module on which $G$ acts trivially.  Hence an $R_m\{1\}$-module is a trivial $R_mG$-module.

We define $l_M(u)$ of an element $u$ of an $R_mG$-module $M$ to be the dimension over $\Fp$ of the $\Fp G$-submodule $\langle \bar u\rangle$ of $M/pM$ generated by $\bar u:=u+pM$. When the context is clear we abbreviate $l_M(u)$ by $l(u)$.  We have
\begin{equation*}
    (\sigma-1)^{l(u)-1}\langle \bar u \rangle =
    \langle \bar u \rangle^G \neq \{0\} \text{\quad and\quad}
    (\sigma-1)^{l(u)}\langle \bar u \rangle = \{0\}.
\end{equation*}
Therefore $0\le l(u)\le p^n$.

We say that the \emph{length} of a cyclic $\Fp G$-module is the length of any generator of that module. Observe that as $\Fp G$-modules, the free $\Fp G_i$-modules on one generator are precisely the cyclic $\Fp G$-modules generated by an element of length $p^i$. Finally, recall that the indecomposable $\Fp G$-modules are precisely the cyclic $\Fp G$-modules.

We will use without mention the facts that in $\Fp G$,
\begin{equation*}
    P(i,j)=(\sigma^{p^j}-1)^{p^{i-j}-1}=(\sigma-1)^{p^{i}-p^j},
    \ 0\le j< i\le n.
\end{equation*}

\begin{lemma}\label{le:cycprop}\index{Lemma \ref{le:cycprop}}
    Let $m\ge 2$, $0\le i\le n$, and $M$ an $R_mG_i$-module.
    If $M/p^{m-1}M$ is a cyclic
    $R_{m-1}G_i$-module, then $M$ is a cyclic $R_mG_i$-module.
\end{lemma}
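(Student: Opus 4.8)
The plan is to lift a generator of the cyclic quotient to an element of $M$ and then kill the remaining ``error term'' using the $p$-adic filtration of $M$, rather than invoking Nakayama's lemma directly (the cyclic-module argument given earlier in this section uses Nakayama, but that requires a finite-generation hypothesis we do not have in this generality).

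First I would pick $u\in M$ whose image $\bar u$ in $M/p^{m-1}M$ generates $M/p^{m-1}M$ as an $R_{m-1}G_i$-module. Unwinding this: for any $x\in M$ there is an $r\in R_mG_i$ (lift the relevant coefficient from $R_{m-1}G_i$ to $R_mG_i$) with $x\equiv ru \pmod{p^{m-1}M}$. Since $x$ was arbitrary this says exactly that $M = R_mG_i\,u + p^{m-1}M$.

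Next I would pass to the quotient module $N := M/(R_mG_i\,u)$. Applying the quotient map $M\to N$ to the equality $M = R_mG_i\,u + p^{m-1}M$ yields $N = p^{m-1}N$. Iterating once gives $N = p^{m-1}N = p^{m-1}(p^{m-1}N) = p^{2(m-1)}N$. Because $m\ge 2$ we have $2(m-1)\ge m$, and since $M$ is an $R_mG_i$-module we have $p^mN = 0$, hence $p^{2(m-1)}N = 0$. Therefore $N = 0$, i.e.\ $M = R_mG_i\,u$, which is cyclic as desired.

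There is essentially no serious obstacle here; the only point that needs a moment's care is resisting the temptation to close the argument with Nakayama's lemma (as was done for cyclic indecomposability above), since $M$ is not assumed finitely generated. The two-step $p$-power iteration avoids this entirely, and it also makes transparent why the hypothesis $m\ge 2$ — equivalently $2(m-1)\ge m$ — is exactly what the proof needs.
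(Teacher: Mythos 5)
Your proof is correct and is essentially the paper's own argument: the paper also lifts a generator $u$, writes an arbitrary $v\in M$ as $v=fu+p^{m-1}x$, substitutes once more for $x$, and concludes from $2(m-1)\ge m$ that the error term vanishes — which is exactly your two-step iteration $N=p^{m-1}N=p^{2(m-1)}N=0$ phrased element-wise instead of via the quotient $N=M/R_mG_i\,u$. (The paper likewise does not invoke Nakayama here, so your caveat is consistent with its treatment.)
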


\begin{proof}
    Choose $u\in M$ such that $u$ generates $M/p^{m-1}M$.  Then for an arbitrary element $v\in M$, $v=fu+p^{m-1}x$ for $f\in R_mG$ and $x\in M$.  Moreover, $x=gu+p^{m-1}y$ for some $g\in R_mG$ and $y\in M$.  But then $v=(f+p^{m-1}g)u+ p^{2(m-1)}y=(f+p^{m-1}g)u$ since $2(m-1)\ge m$.  Hence $u$ generates $M$.
\end{proof}

For an $R_m G$-module $M$, we define the trivial $\Fp G$-module
\begin{equation*}
    M^\star := \ann_{M} \Ic = M^G \cap \ann_M p = \ann_M (\sigma-1) \cap \ann_M p.
\end{equation*}
For the balance of this section we will study this fixed submodule, and show that it can be used to great effect.  It is clear that $M\subset N$ implies $M^\star \subset N^\star$.  

\begin{lemma}\label{le:starzero}\index{Lemma \ref{le:starzero}}
    Let $M$ be a nonzero $R_mG$-module.  Then $M^\star\neq \{0\}$.
\end{lemma}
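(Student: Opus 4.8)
The plan is to use the fact that the maximal ideal $\Ic = \langle p, \sigma-1\rangle$ of the finite local ring $R_mG$ is nilpotent, and then to extract a nonzero $\Ic$-annihilated element from the bottom nonzero layer of the filtration $M \supseteq \Ic M \supseteq \Ic^2 M \supseteq \cdots$.

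First I would verify that $\Ic$ is nilpotent. Reducing modulo $p$, one has $(\sigma-1)^{p^n} \equiv \sigma^{p^n}-1 = 0$, so $(\sigma-1)^{p^n} \in pR_mG$; since each ideal generator of $\Ic^{p^n}$ is a product of $p^n$ elements drawn from $\{p,\sigma-1\}$, and hence is either divisible by $p$ or equal to $(\sigma-1)^{p^n}$, we get $\Ic^{p^n} \subseteq pR_mG$, and therefore $\Ic^{mp^n} = (\Ic^{p^n})^m \subseteq p^mR_mG = \{0\}$. (Equivalently, $R_mG$ is a finite, hence Artinian, ring, so its Jacobson radical --- which is $\Ic$ --- is nilpotent.) Insisting on honest nilpotence here, rather than merely invoking Nakayama, is what frees us from having to assume $M$ is finitely generated.

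Next, since $M \neq \{0\}$ while $\Ic^{mp^n}M = \{0\}$, I would let $N \geq 1$ be minimal with $\Ic^N M = \{0\}$; then $\Ic^{N-1}M \neq \{0\}$. Choosing any nonzero $y \in \Ic^{N-1}M$, we have $\Ic y \subseteq \Ic^N M = \{0\}$, so in particular $py = 0$ and $(\sigma-1)y = 0$; that is, $y \in \ann_M(\sigma-1) \cap \ann_M p = M^\star$. Hence $M^\star \neq \{0\}$, as claimed.

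I do not anticipate a genuine obstacle; the only step requiring any care is the nilpotence of $\Ic$, and once that is in hand the conclusion is a one-line filtration argument.
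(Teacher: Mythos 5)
Your proof is correct; the only point needing care is the nilpotence of $\Ic$, and your verification of it is sound: every generator of $\Ic^{p^n}$ is either divisible by $p$ or equals $(\sigma-1)^{p^n}$, which lies in $pR_mG$ since $(\sigma-1)^{p^n}\equiv \sigma^{p^n}-1=0 \bmod p$, so $\Ic^{mp^n}=(\Ic^{p^n})^m\subseteq p^mR_mG=\{0\}$, and the last nonzero term of the filtration $M\supseteq \Ic M\supseteq\cdots$ supplies a nonzero element of $M^\star$. The paper's route is different in its mechanics: it first passes to a nonzero cyclic submodule $\langle u\rangle$ of $M$, then chooses $j$ maximal with $(\sigma-1)^ju\neq 0$ and subsequently $l$ maximal with $p^l(\sigma-1)^ju\neq 0$, and checks directly that $w=p^l(\sigma-1)^ju$ is killed by both $\sigma-1$ and $p$. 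Both arguments are at heart the statement that a nonzero module over a ring with nilpotent maximal ideal has nonzero socle, but yours makes the nilpotence of $\Ic$ explicit and works with the $\Ic$-adic filtration of all of $M$ in one step, which is arguably more transparent and, as you note, visibly free of any finiteness hypothesis on $M$; the paper's two-step maximality argument buys a concrete socle element $p^l(\sigma-1)^j u$ expressed in terms of a chosen generator, which is occasionally convenient, and its reduction to a cyclic submodule achieves the same generality in the end. Either proof is acceptable.
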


\begin{proof}
    Since every module contains a cyclic module, it is enough to show the result for a nonzero cyclic module $M=\langle u\rangle$. By Nakayama's Lemma we have $u\not\in pM$.  Now set
    \begin{equation*}
        j := \max \{ k\ge 0 \colon (\sigma-1)^k u \neq 0\},
    \end{equation*}
    where $(\sigma-1)^0=1$.  Hence $(\sigma-1)^j u \neq 0$. Now set
    \begin{equation*}
        l = \max \{ k\ge 0 \colon p^k(\sigma-1)^ju \neq 0\},
    \end{equation*}
    and let $w = p^l(\sigma-1)^j u$. Then by the maximality of $l$ we have $pw = 0$.  Moreover, by the maximality of $j$ we get $(\sigma-1)w = 0$ as well.  Therefore $0\neq w\in M^\star$.
\end{proof}

\begin{lemma}\label{le:excl}\index{Lemma \ref{le:excl}}
    Let $0\le i\le n$, and let $M_1$ and $M_2$ be $R_mG_i$-submodules contained in a common $R_mG_i$-module.  If $M_1^\star\cap M_2^\star = \{0\}$, then $M_1+M_2 = M_1\oplus M_2$.
\end{lemma}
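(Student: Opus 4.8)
The plan is to show that the sum $M_1 + M_2$ is direct by verifying that $M_1 \cap M_2 = \{0\}$, and the natural way to do this is by contradiction: suppose $N := M_1 \cap M_2$ is a nonzero $R_mG_i$-submodule. The key observation is that $N$ is then itself a nonzero $R_mG_i$-module, so by Lemma \ref{le:starzero} we have $N^\star \neq \{0\}$. The goal is to locate this $N^\star$ inside both $M_1^\star$ and $M_2^\star$, which would contradict the hypothesis $M_1^\star \cap M_2^\star = \{0\}$.

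The main step is therefore the containment $N^\star \subseteq M_1^\star \cap M_2^\star$. This follows from the monotonicity remark recorded just before Lemma \ref{le:starzero} (namely, $M \subset N$ implies $M^\star \subset N^\star$; here we want the reverse inclusion of stars coming from $N \subseteq M_1$ and $N \subseteq M_2$). Concretely, since $N \subseteq M_1$ and $N \subseteq M_2$, and since the definition $P^\star = \ann_P \Ic = \ann_P(\sigma-1) \cap \ann_P p$ is computed by intersecting the ambient module with the fixed set of the operators $\sigma - 1$ and $p$, any element of $N$ annihilated by $\Ic$ is also an element of $M_1$ (resp. $M_2$) annihilated by $\Ic$. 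Hence $N^\star \subseteq M_1^\star$ and $N^\star \subseteq M_2^\star$, so $N^\star \subseteq M_1^\star \cap M_2^\star = \{0\}$, forcing $N^\star = \{0\}$, which contradicts Lemma \ref{le:starzero} unless $N = \{0\}$.

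Once $M_1 \cap M_2 = \{0\}$ is established, the sum $M_1 + M_2$ is direct by the standard criterion, and the proof is complete. I do not anticipate a serious obstacle here: the argument is a short application of Lemma \ref{le:starzero} together with the observation that the $(-)^\star$ operation behaves well with respect to inclusion of submodules inside a fixed ambient module. The only point requiring a moment's care is to make sure that $N = M_1 \cap M_2$ is genuinely an $R_mG_i$-submodule of the common ambient module (so that Lemma \ref{le:starzero} applies to it), but this is immediate since an intersection of submodules is a submodule.
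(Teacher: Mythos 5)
Your argument is correct and is essentially identical to the paper's own proof: both assume $M_1\cap M_2\neq\{0\}$, apply Lemma~\ref{le:starzero} to get $(M_1\cap M_2)^\star\neq\{0\}$, and use the monotonicity $(M_1\cap M_2)^\star\subset M_1^\star\cap M_2^\star$ to reach a contradiction. No changes needed.
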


\begin{proof}
    If $M_1\cap M_2 \neq \{0\}$, then by Lemma~\ref{le:starzero}\index{Lemma \ref{le:starzero}} we have $(M_1\cap M_2)^\star \neq 0$. Since $(M_1\cap M_2)^\star \subset M_1^\star\cap M_2^\star$, we have a contradiction.
\end{proof}

\begin{lemma}\label{le:starrmgi}\index{Lemma \ref{le:starrmgi}}
    For $m\in \N$ and $0\le i\le n$,
    \begin{equation*}
        (R_mG_i)^\star = \langle p^{m-1}(1+\sigma+\cdots+\sigma^{p^i-1})\rangle = \langle p^{m-1}(\sigma-1)^{p^i-1}\rangle \neq \{0\}.
    \end{equation*}
\end{lemma}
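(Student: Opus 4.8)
The plan is to unwind the definition of $(R_mG_i)^\star$ and reduce everything to annihilator computations that are already in hand. By the displayed identity defining $M^\star$, applied to $M = R_mG_i$ regarded as a module over itself, we have
\begin{equation*}
(R_mG_i)^\star = \ann_{R_mG_i} p \cap \ann_{R_mG_i}(\sigma-1).
\end{equation*}
I would first dispose of the degenerate case $i=0$: here $R_mG_0 = R_m$ and $\sigma = 1$, so $\ann_{R_m}(\sigma-1) = R_m$, and Lemma~\ref{le:kerbasic} (first statement, $k=1$) gives $(R_m)^\star = \ann_{R_m}p = \langle p^{m-1}\rangle$. This agrees with the claimed formula, since $P(0,0) = 1 = (\sigma-1)^{p^0-1}$ and $p^{m-1}\neq 0$ in $R_m$.

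For $1 \le i \le n$ I would invoke Lemma~\ref{le:kerbasic} twice. Its first statement with $k=1$ gives $\ann_{R_mG_i}p = \langle p^{m-1}\rangle$, and its second statement with $j=0$ and $k=0$ gives $\ann_{R_mG_i}(\sigma-1) = \ann_{R_mG_i}p^0(\sigma^{p^0}-1) = \langle P(i,0), p^m\rangle = \langle P(i,0)\rangle$, the last step because $p^m = 0$ in $R_m$. Therefore $(R_mG_i)^\star = \langle p^{m-1}\rangle \cap \langle P(i,0)\rangle$, and Lemma~\ref{le:separate} (with $j=0$, $k=m-1$) identifies this intersection precisely with $\langle p^{m-1}P(i,0)\rangle$. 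The first advertised form is then immediate, since $P(i,0) = 1+\sigma+\cdots+\sigma^{p^i-1}$ by definition.

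The only point requiring a moment's care is the second form: the identity $P(i,0) = (\sigma-1)^{p^i-1}$ holds in $\Fp G$, not in $R_mG_i$. To bridge this, note that $P(i,0)$ and $(\sigma-1)^{p^i-1}$ have the same image in $\Fp G$, hence their difference lies in $p\Z[\sigma]$; multiplying by $p^{m-1}$ puts the difference in $p^m\Z[\sigma]$, which vanishes in $R_mG_i$. Thus $p^{m-1}P(i,0) = p^{m-1}(\sigma-1)^{p^i-1}$ in $R_mG_i$, giving $\langle p^{m-1}P(i,0)\rangle = \langle p^{m-1}(\sigma-1)^{p^i-1}\rangle$. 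Finally, nonvanishing is clear: $R_mG_i$ is free over $R_m$ with basis $\{1,\sigma,\dots,\sigma^{p^i-1}\}$, and $p^{m-1}P(i,0)$ has every coordinate equal to $p^{m-1}\neq 0$ in $R_m$ (alternatively, $(R_mG_i)^\star\neq\{0\}$ is an instance of Lemma~\ref{le:starzero}). There is no genuine obstacle in this argument; the $\Fp G$-versus-$R_mG_i$ distinction just noted, together with keeping the $i=0$ edge case honest, is the extent of the subtlety.
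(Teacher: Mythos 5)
Your proof is correct and follows essentially the same route as the paper: compute the two annihilators via Lemma~\ref{le:kerbasic}, intersect them via Lemma~\ref{le:separate}, pass from $P(i,0)$ to $(\sigma-1)^{p^i-1}$ using the mod-$p$ identity, and conclude nonvanishing from the freeness of $R_mG_i$ over $R_m$. Your extra care with the $i=0$ edge case and the explicit justification that $p^{m-1}P(i,0)=p^{m-1}(\sigma-1)^{p^i-1}$ in $R_mG_i$ are both fine refinements of the paper's terser argument.
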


\begin{proof}
    From Lemma~\ref{le:kerbasic}\index{Lemma \ref{le:kerbasic}}, $\ann_{R_mG_i} (\sigma-1) = \langle P(i,0)\rangle$ and $\ann_{R_mG_i} p = \langle p^{m-1}\rangle$.  Then Lemma~\ref{le:separate}\index{Lemma \ref{le:separate}} gives us $(R_mG_i)^\star = \langle p^{m-1}(1 + \sigma + \cdots + \sigma^{p^i-1})\rangle$. Since $(\sigma-1)^{p^i-1} = 1 + \sigma + \cdots+\sigma^{p^{i}-1}$ modulo $p$, we have the second equality.  Now $R_mG_i$ is the free $R_m$-module on $\{1,\sigma,\cdots,\sigma^{p^i-1}\}$, which gives the inequality.
\end{proof}

\begin{lemma}\label{le:idealrmgi}\index{Lemma \ref{le:idealrmgi}}
    For $m\in \N$ and $0\le i\le n$, every nonzero ideal of $R_mG_i$
    contains
    \begin{equation*}
        p^{m-1}(\sigma-1)^{p^i-1}
        = p^{m-1}(1+\sigma+\cdots+\sigma^{p^{i}-1}).
    \end{equation*}
\end{lemma}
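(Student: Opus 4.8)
The plan is to derive this as a quick corollary of the fixed-submodule machinery just developed, in particular Lemmas~\ref{le:starzero} and~\ref{le:starrmgi}. Let $J$ be a nonzero ideal of $R_mG_i$, regarded as a nonzero $R_mG_i$-submodule of the $R_mG_i$-module $R_mG_i$. The goal is to show $p^{m-1}(\sigma-1)^{p^i-1}\in J$.

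First I would apply Lemma~\ref{le:starzero} to $J$ to conclude $J^\star\neq\{0\}$. (This is legitimate: since $J$ is annihilated by $H_i$, the action of $R_mG$ on $J$ factors through $R_mG_i$, so $\ann_J\Ic_{m,n}=\ann_J\Ic_{m,i}$, and the cyclic-module-plus-Nakayama argument of Lemma~\ref{le:starzero} applies verbatim over the local ring $R_mG_i$.) On the other hand, from the containment $J\subset R_mG_i$ and the monotonicity of the $\star$-operation noted before Lemma~\ref{le:starzero}, we get $J^\star\subset (R_mG_i)^\star$, and by Lemma~\ref{le:starrmgi} the latter is the nonzero ideal $\langle p^{m-1}(\sigma-1)^{p^i-1}\rangle=\langle p^{m-1}(1+\sigma+\cdots+\sigma^{p^i-1})\rangle$.

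Next I would observe that $(R_mG_i)^\star$ is one-dimensional over $\Fp$: by the very definition of the $\star$-operation it is annihilated by $\Ic$, hence is a module over $R_mG_i/\Ic\simeq\Fp$, and it is cyclic (generated by $p^{m-1}(\sigma-1)^{p^i-1}$), so being nonzero it is isomorphic to $\Fp$. Consequently the chain $\{0\}\subsetneq J^\star\subset (R_mG_i)^\star\simeq\Fp$ forces $J^\star=(R_mG_i)^\star$, and in particular $p^{m-1}(\sigma-1)^{p^i-1}\in J^\star\subset J$, which is exactly the assertion; the displayed equality with $p^{m-1}(1+\sigma+\cdots+\sigma^{p^i-1})$ is the one already recorded in Lemma~\ref{le:starrmgi}.

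There is essentially no genuine obstacle here, since all of the real work sits in the preceding lemmas; the only point that warrants a careful sentence is the harmless identification of $\ann_M\Ic$ computed in $R_mG$ versus in $R_mG_i$ for a module $M$ killed by $H_i$, which is immediate because $\Ic_{m,n}=\langle p,\sigma-1\rangle$ surjects onto $\Ic_{m,i}=\langle p,\sigma-1\rangle$ under the quotient map $R_mG\to R_mG_i$.
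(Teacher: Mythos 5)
Your proposal is correct and follows essentially the same route as the paper: invoke Lemma~\ref{le:starzero} to get $J^\star\neq\{0\}$, bound $J^\star$ inside $(R_mG_i)^\star=\langle p^{m-1}(\sigma-1)^{p^i-1}\rangle$ via Lemma~\ref{le:starrmgi}, and conclude by noting the latter is one-dimensional over $\Fp$. The extra remark about computing $\ann_J\Ic$ over $R_mG$ versus $R_mG_i$ is a harmless clarification that the paper leaves implicit.
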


\begin{proof}
    Let $I$ be a nonzero ideal of $R_mG_i$.  Then $I$ is a
    submodule. Since $I$ is nonzero, Lemma~\ref{le:starzero}\index{Lemma \ref{le:starzero}} gives
    $I^\star \neq \{0\}$.
    On the other hand, Lemma~\ref{le:starrmgi}\index{Lemma \ref{le:starrmgi}} tells us
    \begin{equation*}
        I^\star \subset (R_mG_i)^\star = \langle
        p^{m-1}(\sigma-1)^{p^i-1}\rangle.
    \end{equation*}
    Therefore
    \begin{equation*}
        \{0\} \neq I^\star \subset \langle
        p^{m-1}(\sigma-1)^{p^i-1}\rangle.
    \end{equation*}
    Observing that the right-hand module is of $\Fp$-dimension 1, we have the result.
\end{proof}

As a final comment in this section, we point out that it will often be useful for us to know that two decompositions of an $R_mG$-module are equivalent to each other (up to the ordering of summands, of course).  This idea is made more precise by the Krull-Schmidt Theorem.

\begin{proposition}[Krull-Schmidt Theorem; {see \cite[Theorem~12.9]{AnFu}}]
Let $M$ be a nonzero module of finite length.  Then $M$ has a finite indecomposable decomposition $M = M_1 \oplus \cdots \oplus M_n$ such that for every indecomposable decomposition $$M = N_1 \oplus \cdots \oplus N_k,$$ we have $n=k$ and there is some permutation $\sigma \in S_n$ so that $M_i \simeq N_{\sigma(i)}$ for all $1 \leq i \leq n$.
\end{proposition}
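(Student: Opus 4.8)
The statement is the classical Krull--Schmidt theorem, so the plan is to give the standard proof, whose engine is Fitting's lemma together with an exchange argument; alternatively one may simply invoke \cite[Theorem~12.9]{AnFu}, but here is how I would run the argument directly.

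\emph{Existence.} First I would produce a finite indecomposable decomposition by induction on the length $\ell(M)$. If $M$ is indecomposable there is nothing to do; otherwise $M=A\oplus B$ with $A,B$ nonzero, so $\ell(A),\ell(B)<\ell(M)$, and I apply the inductive hypothesis to each of $A$ and $B$ (finite length passes to direct summands).

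\emph{Local endomorphism rings.} The crucial input for uniqueness is that an indecomposable module of finite length has a local endomorphism ring. I would first prove Fitting's lemma: for $f\in\operatorname{End}(M)$ with $\ell(M)$ finite, the chains $\ker f\subseteq\ker f^2\subseteq\cdots$ and $\operatorname{im}f\supseteq\operatorname{im}f^2\supseteq\cdots$ both stabilize, say past exponent $N$, and then $M=\ker f^N\oplus\operatorname{im}f^N$ (the intersection is zero because $f^{2N}$ and $f^N$ have the same kernel, and the sum is everything because $f^N$ and $f^{2N}$ have the same image). When $M$ is indecomposable this forces $f^N$ to be either $0$ or an isomorphism, so every endomorphism of $M$ is nilpotent or invertible; since for nilpotent $f$ the element $1-f$ is a unit, a one-line check shows the non-units form an ideal, whence $\operatorname{End}(M)$ is local. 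I expect this pair of lemmas to be the main obstacle --- not because they are hard, but because everything else is bookkeeping that rests on them.

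\emph{Uniqueness.} Given $M=M_1\oplus\cdots\oplus M_n=N_1\oplus\cdots\oplus N_k$ with all summands indecomposable, I would induct on $n$; the case $n=1$ is immediate from indecomposability of $M$. For the inductive step, write $\iota_i,\pi_i$ (resp.\ $\kappa_j,\rho_j$) for the structural injections and projections of the two decompositions. From $\operatorname{id}_{M_1}=\sum_j\pi_1\kappa_j\rho_j\iota_1$ and the locality of $\operatorname{End}(M_1)$, some summand --- reindex so that it is $j=1$ --- is an automorphism $\theta$ of $M_1$. Then $g:=\rho_1\iota_1\colon M_1\to N_1$ and $h:=\pi_1\kappa_1\colon N_1\to M_1$ satisfy $hg=\theta$, so $g\theta^{-1}h$ is a nonzero idempotent in $\operatorname{End}(N_1)$ and hence equals $\operatorname{id}_{N_1}$ by locality; this makes $g$ an isomorphism with inverse $\theta^{-1}h$, and in particular $h=\pi_1\kappa_1\colon N_1\to M_1$ is an isomorphism. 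A short check --- using $\ker\pi_1=M_2\oplus\cdots\oplus M_n$ together with the fact that $\pi_1$ restricts to an isomorphism on the submodule $N_1\subseteq M$ --- then gives $M=N_1\oplus M_2\oplus\cdots\oplus M_n$. Quotienting by $N_1$ yields $M_2\oplus\cdots\oplus M_n\cong M/N_1\cong N_2\oplus\cdots\oplus N_k$, so the module $M_2\oplus\cdots\oplus M_n$ carries two indecomposable decompositions (the obvious one, with $n-1$ summands, and the one transported through this isomorphism, with $k-1$ summands); the inductive hypothesis then gives $n=k$ and a matching of the isomorphism types of $\{M_2,\dots,M_n\}$ with $\{N_2,\dots,N_k\}$, and appending $M_1\cong N_1$ completes the proof.
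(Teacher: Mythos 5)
Your proof is correct. The paper does not actually prove this statement -- it is quoted as a known result with a citation to Anderson--Fuller -- so there is nothing internal to compare against; your argument is the standard one (Fitting's lemma to get local endomorphism rings of indecomposables, then the exchange/cancellation step), which is essentially the proof given in the cited reference, and all the key verifications (the direct sum in Fitting's lemma, the idempotent $g\theta^{-1}h$ forcing $N_1\simeq M_1$, and the replacement $M=N_1\oplus M_2\oplus\cdots\oplus M_n$) are sound.
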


\section{Indecomposability of $X_{\mathbf{a},d,m}$}\label{se:indecomp.proof}

We are now prepared to investigate the indecomposability of $X_{\mathbf{a},d,m}$ from Theorem \ref{th:indecomp}.  Recall that the conditions of Theorem 1 give us information about the number $d$, as well as the terms of the vector $\mathbf{a} = (a_0,a_1,\cdots,a_{m-1})$.  The hypotheses are simpler for odd $p$, but when $p=2$ there are certain technicalities that emerge which need to be accounted for.  To facilitate working with these hypotheses, we will list them here:
    \begin{enumerate}
        \renewcommand{\theenumi}{\Roman{enumi}}
       \item\label{it:exc.mod.indecom.condition...d.in.U1} $d\in U_1$
        \item\label{it:exc.mod.indecom.condition...power.of.d} $d^{p^{a_i}}\in U_{i+1}$ for all $0\le i<m$
        \item\label{it:exc.mod.indecom.condition...ai.vs.aj.inequality}
          $a_i+j<a_{i+j}$ for all $0\le i<m$ with  $1\le j<
          m-i$ and $a_{i+j}\neq -\infty$, except if $p=2$, $d\not\in
          U_2$, $i=0$, and $a_i=0$, in which case $a_j\neq 0$ for all
          $1\le j<m$
        \item\label{it:exc.mod.indecom.condition...a0.bounds} if $p=2$ and $n=1$, then $a_0 = -\infty$
        \item\label{it:exc.mod.indecom.condition...ai.vs.v.inequality}
          If $p=2$, $m\ge 2$, $d\in -U_v\setminus -U_{v+1}$ for
          some $v\ge 2$, and $a_0=0$, then $a_{i}>i-(v-1)$ for
          all $v\le i<m$ and $a_i\neq -\infty$.
    \end{enumerate}
So, for example, if we refer to condition (\ref{it:exc.mod.indecom.condition...a0.bounds}) of Theorem 1, we simply mean the hypothesis that $a_0=-\infty$ in the case where $p=2$ and $n=1$.

Our overall strategy is as follows: we will first prove the results in some special cases.  As we move toward the general case, we will need to understand how the invariants attached to $X_{\mathbf{a},d,m}$ manifest relative to properties of particular elements in the module.  These results --- Lemmas \ref{le:indecomp1b}, \ref{le:indecomp2}, and \ref{le:indecomp3} --- are relatively technical in their nature, and the reader might choose to skip forward to the Proof of Theorem \ref{th:indecomp} on a first read and take them on faith.

To get started, we begin with a number of observations. 

First, for each $0 \leq i \leq m-1$, the relations on $X_{\mathbf{a},d,m}$ imply that $l(x_i) = p^{a_i}$, and moreover we have $l(y) = p^{a_0}+1$.  In particular, this implies that $a_0<n$.

Second, note that the conditions (\ref{it:exc.mod.indecom.condition...d.in.U1})--(\ref{it:exc.mod.indecom.condition...ai.vs.v.inequality}) of
Theorem \ref{th:indecomp}\index{Theorem \ref{th:indecomp}} are satisfied for $\mathbf{a}=(a_0,\dots,a_{m-1})$, $d$, and $m$, then they are satisfied for
$(a_0,\dots,a_{i-1})$, $d$, and $i$ for $1\le i<m$ as well.

Third, condition (\ref{it:exc.mod.indecom.condition...ai.vs.aj.inequality}) may be used to conclude that for all $0\le i<j<m$ with $a_i, a_j\neq -\infty$, one has $a_i<a_j$.  We will occasionally use this weaker form (referring to it as (\ref{it:exc.mod.indecom.condition...ai.vs.aj.inequality}) as well), and it is valid in all cases.

Finally, we remark that because $X_{\mathbf{a},d,m}$ is a finitely generated module of $R_mG$, it can be decomposed into a direct sum of indecomposable submodules in a unique way.  
This is a consequence of the Krull-Schmidt theorem.


We start our investigation into the indecomposability of $X_{\mathbf{a},d,m}$ by examining two special cases.

\begin{lemma}\label{le:casea}\index{Lemma \ref{le:casea}}
    Assume the hypotheses of Theorem \ref{th:indecomp}\index{Theorem \ref{th:indecomp}} and
    let $m\ge 2$.  Then in each of the following two cases, $X$ is an
    indecomposable $R_mG$-module:
    \begin{enumerate}
      \item\label{it:casea1} $a_{m-1}=0$, or
      \item\label{it:casea2} $p=2$, $d\not\in U_2$, $a_0=0$,
        $a_{m-1}=1$.
    \end{enumerate}
\end{lemma}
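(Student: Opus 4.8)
The plan is to unwind the hypotheses of Theorem~\ref{th:indecomp} in each case, pinning down $\mathbf a$ and the image of $d$ in $R_m$, and then to rule out every nontrivial direct-sum decomposition. In case~(\ref{it:casea1}): since $a_{m-1}=0$ is the least value other than $-\infty$, the weak form of~(\ref{it:exc.mod.indecom.condition...ai.vs.aj.inequality}) forces $a_i=-\infty$, hence $x_i=0$, for all $i<m-1$; and~(\ref{it:exc.mod.indecom.condition...power.of.d}) at $i=m-1$ gives $d\in U_m$, so $d=1$ in $R_m$. Thus $X=\langle y,x_{m-1}\rangle$ with $(\sigma-1)y=p^{m-1}x_{m-1}$ and $\sigma x_{m-1}=x_{m-1}$; consequently $\sigma^p$ acts trivially on $X$, $\{y,x_{m-1}\}$ is an $R_m$-basis so $X\cong R_m\oplus R_m$ as $R_m$-modules, $(\sigma-1)X=\Fp\,p^{m-1}x_{m-1}$ is one-dimensional over $\Fp$, and $x_{m-1}$ is a $\sigma$-fixed element of order $p^m$. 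In case~(\ref{it:casea2}): the weak form of~(\ref{it:exc.mod.indecom.condition...ai.vs.aj.inequality}) squeezes $a_i$ between $a_0=0$ and $a_{m-1}=1$, forcing $a_i=-\infty$ (hence $x_i=0$) for $0<i<m-1$, while~(\ref{it:exc.mod.indecom.condition...power.of.d}) and~(\ref{it:exc.mod.indecom.condition...ai.vs.v.inequality}) together exclude $d\in-U_{m-1}\setminus-U_m$ and force $d\equiv-1\pmod{p^m}$, so $d=-1$ in $R_m$ and $n\ge2$. Thus $X=\langle y,x_0,x_{m-1}\rangle$ with $(\sigma+1)y=x_0+2^{m-1}x_{m-1}$, $\sigma x_0=x_0$, $\sigma^2 x_{m-1}=x_{m-1}$; one checks $\sigma^4$ acts trivially on $X$, $\{y,x_0,x_{m-1},\sigma x_{m-1}\}$ is an $R_m$-basis, $(\sigma^2-1)X=\Fp\,2^{m-1}(\sigma-1)x_{m-1}$ is one-dimensional, and $w:=(\sigma-1)x_{m-1}$ is $\sigma^2$-fixed of order $2^m$.

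For case~(\ref{it:casea1}) I argue by contradiction. Suppose $X=A\oplus B$ with $A,B\ne\{0\}$. Since $(\sigma-1)X$ is one-dimensional and $(\sigma-1)X=(\sigma-1)A\oplus(\sigma-1)B$, after relabelling $\sigma$ acts trivially on $B$ and $(\sigma-1)A=(\sigma-1)X\ne\{0\}$. As $X\cong(\Z/p^m)^2$ as an abelian group and $B$ is a nonzero direct summand, the structure theorem for finite abelian $p$-groups forces $A\cong B\cong\Z/p^m$; then $\sigma$ acts on $A$ as multiplication by a unit $u$, and $\dim_{\Fp}(\sigma-1)A=1$ gives $v_p(u-1)=m-1$. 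Write the $\sigma$-fixed element $x_{m-1}=a+b$ with $a\in A$, $b\in B$; both components are $\sigma$-fixed, so $(u-1)a=0$, i.e.\ $p^{m-1}a=0$, so $a\in pA$ and $p^{m-1}x_{m-1}=p^{m-1}b\in B$. But $p^{m-1}x_{m-1}$ spans $(\sigma-1)X\subseteq A$, so $p^{m-1}x_{m-1}\in A\cap B=\{0\}$, contradicting that $x_{m-1}$ has order $p^m$. (Equivalently, one can compute $\operatorname{End}_{R_mG}(X)$ and check it has no nontrivial idempotents.)

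For case~(\ref{it:casea2})---now allowed to use case~(\ref{it:casea1})---suppose $X=A\oplus B$ with $A,B\ne\{0\}$, and use that $(\sigma^2-1)X$ is one-dimensional to arrange $\sigma^2$ trivial on $B$ with $(\sigma^2-1)A=(\sigma^2-1)X\ne\{0\}$. First, a short computation with Lemmas~\ref{le:starzero}--\ref{le:idealrmgi} gives $X^\star=\Fp\,2^{m-1}x_0\oplus\Fp\,2^{m-1}(1+\sigma)x_{m-1}$, which is two-dimensional; since $X^\star=A^\star\oplus B^\star$ with both summands nonzero (Lemma~\ref{le:starzero}), each is one-dimensional, and because $2^{m-1}(1+\sigma)x_{m-1}=2^{m-1}(\sigma-1)x_{m-1}$ lies in $(\sigma^2-1)X\subseteq A$ we get $A^\star=\Fp\,2^{m-1}(1+\sigma)x_{m-1}$ and $B^\star=\Fp\,2^{m-1}x_0$. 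Using $\langle x_{m-1}\rangle^\star=A^\star$ and $\langle x_0\rangle^\star=B^\star$ (Lemma~\ref{le:starrmgi}), Lemma~\ref{le:excl} yields $\langle x_{m-1}\rangle\cap B=\{0\}$ and $\langle x_0\rangle\cap A=\{0\}$; hence the projection along $B$ embeds $\langle x_{m-1}\rangle\cong R_mG_1$ into $A$, and the projection along $A$ embeds $\langle x_0\rangle\cong R_m$ into $B$. Separately, restricting to the subring generated by $R_m$ and $\sigma^2$ (a copy of $R_m[\Z/2]$, since $\sigma^4=1$ on $X$ and $n\ge2$), the $R_m$-basis above gives $X|_{\langle\sigma^2\rangle}\cong M_0\oplus R_m\oplus R_m$, where $M_0=\langle y,w\rangle$ with $\sigma^2y=y+2^{m-1}w$, $\sigma^2w=w$ is a copy of the module from case~(\ref{it:casea1}), hence indecomposable; since $\sigma^2$ is trivial on $B$, the restriction $B|_{\langle\sigma^2\rangle}$ is a sum of trivial cyclic modules, so by Krull--Schmidt either $B|_{\langle\sigma^2\rangle}\cong(\Z/2^m)^2$ or $B|_{\langle\sigma^2\rangle}\cong\Z/2^m$. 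If $B|_{\langle\sigma^2\rangle}\cong(\Z/2^m)^2$ then $\dim_{\Fp}A=2m$, forcing the embedded copy of $R_mG_1$ to equal $A$, so $\sigma^2$ acts trivially on $A$---contradicting $(\sigma^2-1)A\ne\{0\}$. If $B|_{\langle\sigma^2\rangle}\cong\Z/2^m$ then $\dim_{\Fp}B=m$, forcing the embedded copy of $R_m$ to equal $B$, so $\sigma$ acts trivially on $B$ and $x_{0,B}$ generates $B$; projecting the relation $(\sigma+1)y=x_0+2^{m-1}x_{m-1}$ along $A$ gives $2y_B=x_{0,B}+2^{m-1}x_{m-1,B}$, whose left side and second right-hand term lie in $2B$ while $x_{0,B}\notin 2B$ (as $m\ge2$)---a contradiction.

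The real obstacle is case~(\ref{it:casea2}): the combinatorial bookkeeping needed to reduce the hypotheses, split $M_0$ off $X|_{\langle\sigma^2\rangle}$, and locate $x_0$, $x_{m-1}$ and their $2^{m-1}$-multiples inside $X^\star$ precisely enough that both branches of the Krull--Schmidt dichotomy collapse; everything else (the explicit $R_m$-bases, the $M^\star$-computations, Nakayama's lemma, Krull--Schmidt, and, should the bookkeeping require it, the norm-operator annihilators of Section~\ref{se:normoperators}) is routine.
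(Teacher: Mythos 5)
Your argument is correct, but it takes a genuinely different route from the paper's. The paper handles both cases with one template: reduce modulo $p^{m-1}$ to write $X/p^{m-1}X = A\oplus B$ with both summands cyclic, invoke Krull--Schmidt and Lemma~\ref{le:cycprop} to conclude that any nontrivial decomposition $X=V\oplus W$ has cyclic summands with generators of the explicit form $ay+bx_{m-1}$ (one coefficient a unit), and then exhibit a single nonzero element --- $p^{m-1}x_{m-1}$ in case~(\ref{it:casea1}), $2^{m-1}(\sigma+1)x_{m-1}$ in case~(\ref{it:casea2}) --- lying in both $V$ and $W$. You instead work from explicit $R_m$-bases: in case~(\ref{it:casea1}) you use the structure theorem for abelian $p$-groups plus the $\sigma$-fixedness of $x_{m-1}$, and in case~(\ref{it:casea2}) you compute the socle $X^\star$, use Lemmas~\ref{le:starzero}, \ref{le:excl} and \ref{le:starrmgi} to embed $\langle x_{m-1}\rangle$ into $A$ and $\langle x_0\rangle$ into $B$, and then restrict to $\langle\sigma^2\rangle$ so that Krull--Schmidt together with the already-proved case~(\ref{it:casea1}) (applied to $M_0$) forces a dimension dichotomy that you kill on both branches. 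Bootstrapping case~(\ref{it:casea2}) from case~(\ref{it:casea1}) via restriction is a nice idea the paper does not use; the cost is that your case~(\ref{it:casea2}) is longer and leans on the $\star$-machinery, whereas the paper's is uniform and purely computational.

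Two small repairs are needed. First, your assertions that $\{y,x_{m-1}\}$ (resp.\ $\{y,x_0,x_{m-1},\sigma x_{m-1}\}$) is an $R_m$-basis carry real content --- the relations could in principle collapse the module --- and should be justified, e.g.\ by mapping $X$ onto the free $R_m$-module with the prescribed $\sigma$-action (checking $\sigma^{p^n}=1$ there, which uses $n\ge 1$, resp.\ $n\ge 2$) and comparing cardinalities; this plays the role of the paper's careful verification that $p^{m-1}x_{m-1}\neq 0$. Second, your claim that $B^\star=\Fp\,2^{m-1}x_0$ does not follow: $B^\star$ is only known to be a one-dimensional complement of $A^\star$ in $X^\star$, so it could equally be spanned by $2^{m-1}x_0+2^{m-1}(1+\sigma)x_{m-1}$. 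Fortunately the only thing you use is $\langle x_0\rangle^\star\cap A^\star=\{0\}$, which holds directly because $\langle x_0\rangle^\star=\Fp\,2^{m-1}x_0$ and $A^\star=\Fp\,2^{m-1}(1+\sigma)x_{m-1}$ are visibly distinct lines; so Lemma~\ref{le:excl} still gives $\langle x_0\rangle\cap A=\{0\}$ and the rest of the argument stands.
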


\begin{proof}
    In the first case, by (\ref{it:exc.mod.indecom.condition...ai.vs.aj.inequality}) we have $a_{i}=-\infty$ for
    $0\le i<m-1$.  The same is true the second case for $0<i<m-1$, again by (\ref{it:exc.mod.indecom.condition...ai.vs.aj.inequality}) and (\ref{it:exc.mod.indecom.condition...ai.vs.v.inequality}).

    In either case, suppose that $X_{\mathbf{a},d,m}\simeq V\oplus W$ with
    $V, W \neq \{0\}$. By Nakayama's Lemma, $V/p^{m-1}V$ and
    $W/p^{m-1}W$ are nonzero as well, and our first goal is to
    determine the isomorphism classes of $V/p^{m-1}V$ and
    $W/p^{m-1}W$.

    In view of the defining relations of $X_{\mathbf{a},d,m}$, we have
    \begin{equation*}
        X/p^{m-1}X = A\oplus B,
    \end{equation*}
    where
    \begin{equation*}
        A := X_{\check{\mathbf{a}},d,m-1}
    \end{equation*}
    with $\check{\mathbf{a}} = (a_0,\dots,a_{m-2})$ and
    \begin{equation*}
        B := \langle x_{m-1}\rangle/(p^{m-1}x_{m-1},
        (\sigma^{p^{a_{m-1}}}-1)x_{m-1})\simeq R_{m-1}G_{a_{m-1}}.
    \end{equation*}

    We claim that each of $V$ and $W$ is cyclic, as follows.
    In case (\ref{it:casea1}),
    \begin{equation*}
        X_{\check{\mathbf{a}},d,m-1} = \langle y : (\sigma-d)y=0 \rangle
    \end{equation*}
    while in case (\ref{it:casea2}),
    \begin{equation*}
        X_{\check{\mathbf{a}},d,m-1} = \langle y,x_0 : (\sigma-d)y=x_0,
        (\sigma-1)x_0=0 \rangle.
    \end{equation*}
    In both cases we deduce that $A$ is cyclic.  We see that $B$ is
    cyclic by definition. By the Krull-Schmidt Theorem, all decompositions of $X/p^{m-1}X$
    into indecomposable $R_{m-1}G$-modules are equivalent. Hence
    $V/p^{m-1}V\oplus W/p^{m-1}W$ decomposes into precisely two
    indecomposable $R_{m-1}G$-modules.  Hence we may assume without
    loss of generality that
    \begin{align*}
        V/p^{m-1}V &\simeq A \\
        W/p^{m-1}W &\simeq B \simeq R_{m-1}G_{a_{m-1}}.
    \end{align*}
    By Lemma~\ref{le:cycprop}\index{Lemma \ref{le:cycprop}}, $V$ and $W$ are cyclic and we let $v\in
    V$, $w\in W$ denote generators.  If $v\in (p,\sigma-1)X$, since
    $X=V\oplus W$ we then obtain $v\in (p,\sigma-1)V$. By Nakayama's Lemma we conclude
    $V=\{0\}$, a contradiction.  Hence $v, w\not\in (p,\sigma-1)X$.

    (\ref{it:casea1}).  Suppose $a_{m-1}=0$.  By (\ref{it:exc.mod.indecom.condition...power.of.d}), we then have
    $d=d^{p^0} = d^{p^{a_{m-1}}}\in U_m$.  But this means that $d \equiv 1 \pmod{p^m}$. Since $d$ acts on $X$ as an element of $R_m \subseteq R_mG$, it is only defined up to a multiple of $p^m$, and hence we may assume without loss of
    generality that $d=1$.  Therefore
    \begin{align*}
        X :&= \langle y, x_{m-1} : (\sigma-1)y=p^{m-1}x_{m-1},
        \sigma x_{m-1}=x_{m-1} \rangle \\
        &= \langle y, x_{m-1} : \sigma y = y + p^{m-1}x_{m-1},
        \sigma x_{m-1}=x_{m-1}\rangle,
    \end{align*}
    and we obtain that each element of $X$ may be written
    $ay+bx_{m-1}$ for $a, b\in R_m$.  In particular, $v$ and $w$
    may be written in this form.  Since $v,w \not\in pX$, we cannot
    have that $a, b\in \langle p\rangle$.  Hence at least one of $a$
    and $b$ is a unit in $R_m$.  Without loss of generality we may
    assume that each of $v$ and $w$ may be written in the form
    $ay+bx_{m-1}$ with at least one of $a$ or $b$ equal to $1$.

    We claim that $p^{m-1}x_{m-1}\neq 0$ in $X$, as follows.
    Suppose $p^{m-1}x_{m-1}=0$; this means that $p^{m-1}x_{m-1}$ is some combination of the relations that define $X_{\mathbf{a},d,m}$ in equation (\ref{eq:definition.of.X}).  Hence there exist $f_y, f_{m-1}\in R_m
    G$ such that in $R_mG[y,x_{m-1}]$ we have
    \begin{equation*}
        p^{m-1}x_{m-1} = f_y\left( (\sigma-1)y-p^{m-1}x_{m-1} \right)
            + f_{m-1}(\sigma-1)x_{m-1}.
    \end{equation*}
    Considering the coefficients of $y$ in the equation above, we
    obtain that $f_y\in \ker_{R_mG} (\sigma-1)$.  By
    Lemma~\ref{le:phidb}\index{Lemma \ref{le:phidb}} we have $f_y\in \langle P(n,0)\rangle$, say
    $f_y=g_yP(n,0)$ for $g_y\in R_m G$.  Now consider the
    coefficients of $x_{m-1}$ in the equation above:
    \begin{equation*}
        p^{m-1} = -p^{m-1} g_y P(n,0) + f_{m-1}(\sigma-1).
    \end{equation*}
    Let $\chi_0:R_mG \to R_m$ be the natural $R_m$-homomorphism
    sending $\sigma$ to $1$.  We see that $\chi_0(P(n,0))=p^n$, and
    since $m\ge 2$, we have $\chi_0(p^{m-1}P(n,0))=0$.
    Applying $\chi_0$ to our equation above, we obtain the equation
    in $R_m$
    \begin{equation*}
        p^{m-1} = 0,
    \end{equation*}
    a contradiction.  Hence $p^{m-1}x_{m-1}\neq 0$ in $X$, as desired.

    Now observe that
    \begin{align*}
       (\sigma-1)(y+bx_{m-1}) &= p^{m-1}x_{m-1}, \quad b\in R_m\\
       p^{m-1}(ay+x_{m-1}) &= p^{m-1}x_{m-1}, \quad a\in \langle
       p\rangle.
    \end{align*}
    Now each of $v$ and $w$ take the form either of $y+bx_{m-1}$ or,
    if the coefficient of $y$ is not a unit, $ay+x_{m-1}$, $a\in
    \langle p\rangle = R_m\setminus U(R_m)$.  Hence $0 \neq p^{m-1} x_{m-1} \in W\cap V$, contradicting the fact that $X=W\oplus V$. Therefore $X$ is indecomposable.

    (\ref{it:casea2}).  Suppose $p=2$, $d\not\in U_2$, $a_0=0$, and
    $a_{m-1}=1$.  By condition (\ref{it:exc.mod.indecom.condition...a0.bounds}) we must have $n>1$.  By (\ref{it:exc.mod.indecom.condition...power.of.d}), $d^2=d^{2^{a_{m-1}}}\in U_m$.  By
    Lemma~\ref{le:upower}\index{Lemma \ref{le:upower}}, we deduce that either $d=-1$ or $d\in -U_v$
    with $v\ge m-1$. By (\ref{it:exc.mod.indecom.condition...ai.vs.v.inequality}), if $v=m-1$, then
    $a_{m-1}>m-1-(v-1)=1$, a contradiction.  Hence $v\ge m$.  Since
    the action of $d$ in $X$ is defined only up to $2^m$, we may
    assume without loss of generality that $d=-1$.  Hence
    \begin{equation}\label{eq:relations.in.case.2}
        X := \langle y, x_0, x_{m-1} :\ (\sigma+1)y=x_0+2^{m-1}x_{m-1},
        \sigma x_0 = x_0, \sigma^2 x_{m-1}=x_{m-1} \rangle
    \end{equation}
    and we obtain that each element of $X$ may be written
    $ay+bx_{m-1}$ for $a, b\in R_m G$.  In particular, $v$ and $w$
    may be written in this form.  Since $v,w \not\in (2,\sigma-1)X$, we cannot
    have that $a, b\in (2,\sigma-1)R_mG$.  Hence at least one of $a$
    and $b$ is a unit in $R_mG$.  Without loss of generality we may
    assume that each of $v$ and $w$ may be written in the form
    $ay+bx_{m-1}$ with at least one of $a$ or $b$ equal to $1$.

    We claim that $2^{m-1}(\sigma+1)x_{m-1}\neq 0$ in $X$, as follows.
    Suppose $2^{m-1}(\sigma+1)x_{m-1}=0$.  Then there exist $f_y, f_0,
    f_{m-1}\in R_m
    G$ such that in $R_mG[y,x_0,x_{m-1}]$ we have
    \begin{multline}\label{eq:casea21}
        2^{m-1}(\sigma+1)x_{m-1} = f_y\left( (\sigma+1)y-x_0-2^{m-1}x_{m-1} \right)
             + \\ f_0(\sigma-1)x_0 + f_{m-1}(\sigma^2-1)x_{m-1}.
    \end{multline}
    Considering the coefficients of $y$ in equation~\eqref{eq:casea21}
    above, we
    obtain that $f_y\in \ker_{R_mG} (\sigma+1)$.  Since $d^2\in U_m$, by
    Lemma~\ref{le:phidb}\index{Lemma \ref{le:phidb}}, we have $f_y\in \langle Q_{-1}(n,0)\rangle$, say
    $f_y=g_yQ_{-1}(n,0)$ for $g_y\in R_m G$.  Let $\chi_1:R_m G\to
    R_mG_1$
    be the natural $R_m$-homomorphism sending $\sigma^2\to 1$.  Now by
    Lemma~\ref{le:qhomo}\index{Lemma \ref{le:qhomo}} we have $\chi_1(Q_d(n,0)) = 0 \mod 2^{n-1} R_m G_1$, and
    since $m\ge 2$ and $n>1$, we get $\chi_1(2^{m-1}Q_d(n,0)) = 0$.
    Considering the coefficients of $x_{m-1}$ in
    equation~\eqref{eq:casea21} and applying $\chi_1$, we obtain the
    equation in $R_m G_1$
    \begin{equation*}
        2^{m-1}(\sigma+1) = 0,
    \end{equation*}
    a contradiction.  Hence $2^{m-1}(\sigma+1)x_{m-1}\neq 0$ in $X$, as desired.

    Now observe that for any $b\in R_mG$,
    \begin{align*}
       (\sigma-1)(\sigma+1)(y+bx_{m-1}) &=
       (\sigma-1)(x_0+2^{m-1}x_{m-1}+b(\sigma+1)x_{m-1})\\
       &= 2^{m-1}(\sigma-1)x_{m-1} \\ &= 2^{m-1}(\sigma+1)x_{m-1}.
    \end{align*}
    Moreover, for any $a\in \langle 2,\sigma-1\rangle$,
    \begin{align*}
       2^{m-1}(\sigma+1)(x_{m-1}+ay) &= 2^{m-1}(\sigma+1)x_{m-1} +
       a2^{m-1}(x_0+2^{m-1}x_{m-1})\\
       &= 2^{m-1}(\sigma+1)x_{m-1},
    \end{align*}
    since $m\ge 2$.  Now each of $v$ and $w$ take the form either of
    $y+bx_{m-1}$ or, if the coefficient of $y$ is not a unit,
    $ay+x_{m-1}$, $a\in \langle 2, \sigma-1\rangle = R_mG\setminus
    U(R_mG)$.  Hence $0 \neq 2^{m-1}(\sigma+1)x_{m-1} \in W \cap V$,
    contradicting the fact that $X=W\oplus V$.  Therefore $X$ is
    indecomposable.
\end{proof}

Now we develop some machinery to handle the cases not considered in
the previous lemma.

\begin{lemma}\label{le:indecomp1}\index{Lemma \ref{le:indecomp1}}
    Assume the hypotheses of Theorem \ref{th:indecomp}\index{Theorem \ref{th:indecomp}} and let
    $m\ge 2$. Suppose that $a_{m-1}\neq -\infty$ and for some $v\in
    X$,
    \begin{equation*}
        (\sigma^{p^{a_{m-1}}}-1)v = p^{m-1}\sum_{i=0}^{m-1}
        g_ix_i,\quad g_i\in R_mG.
    \end{equation*}
    Then $p^{m-1}g_{m-1}\in \langle p^{m-1}(\sigma-1)^{p^{a_{m-1}}-1}
    \rangle$.
\end{lemma}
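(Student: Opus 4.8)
The plan is to lift $v$ to the free $R_mG$-module on $y,x_0,\dots,x_{m-1}$, write $v = fy+\sum_{i=0}^{m-1}h_ix_i$ with $f,h_i\in R_mG$, and track the coefficient of the free generator $x_{m-1}$ after reducing modulo the submodule $N$ generated by the defining relations $r_{-1}:=(\sigma-d)y-\sum_{i=0}^{m-1}p^ix_i$ and $r_i:=(\sigma^{p^{a_i}}-1)x_i$, $0\le i<m$. At the outset we may assume $g_{m-1}$ has $\sigma$-degree $<p^{a_{m-1}}$: replacing it by its remainder modulo $\sigma^{p^{a_{m-1}}}-1$ alters $g_{m-1}x_{m-1}$ by a multiple of $(\sigma^{p^{a_{m-1}}}-1)x_{m-1}=0$, so the hypothesis persists, and the conclusion for the reduced $g_{m-1}$ implies it for the original since $p^{m-1}(\sigma^{p^{a_{m-1}}}-1)=p^{m-1}(\sigma-1)^{p^{a_{m-1}}}\in\langle p^{m-1}(\sigma-1)^{p^{a_{m-1}}-1}\rangle$ in $R_mG$.

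First I would compute $(\sigma^{p^{a_{m-1}}}-1)v$ modulo $N$. Condition~(\ref{it:exc.mod.indecom.condition...power.of.d}) applied with $i=m-1$ gives $d^{p^{a_{m-1}}}\in U_m$, so $d^{p^{a_{m-1}}}$ acts trivially on any $R_mG$-module; together with $(\sigma-d)Q_d(a_{m-1},0)=\sigma^{p^{a_{m-1}}}-d^{p^{a_{m-1}}}$ this yields $(\sigma^{p^{a_{m-1}}}-1)y=Q_d(a_{m-1},0)(r_{-1}+\sum_ip^ix_i)$, hence $(\sigma^{p^{a_{m-1}}}-1)y\equiv Q_d(a_{m-1},0)\sum_ip^ix_i\pmod N$. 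For $0\le i<m-1$, condition~(\ref{it:exc.mod.indecom.condition...ai.vs.aj.inequality}) (weak form) gives $a_i<a_{m-1}$ whenever $a_i\neq-\infty$, so $(\sigma^{p^{a_{m-1}}}-1)x_i=P(a_{m-1},a_i)r_i\in N$, and if $a_i=-\infty$ then $x_i=-r_i$ so again $(\sigma^{p^{a_{m-1}}}-1)x_i\in N$; also $(\sigma^{p^{a_{m-1}}}-1)x_{m-1}=r_{m-1}\in N$. Thus $(\sigma^{p^{a_{m-1}}}-1)v\equiv fQ_d(a_{m-1},0)\sum_ip^ix_i\pmod N$, and combining with the hypothesis, $fQ_d(a_{m-1},0)\sum_ip^ix_i-p^{m-1}\sum_ig_ix_i\in N$. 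Since this element involves no $y$, expressing it as $s_{-1}r_{-1}+\sum_is_ir_i$ forces its $y$-coefficient $s_{-1}(\sigma-d)$ to vanish, so by Lemma~\ref{le:phidb} (with $i=n$), $s_{-1}\in\ann_{R_mG}(\sigma-d)\subseteq\langle Q_d(n,0)\rangle$.

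Next I would compare coefficients of $x_{m-1}$ in that identity in the free module: only $r_{-1}$ (contributing $-p^{m-1}$) and $r_{m-1}$ (contributing $\sigma^{p^{a_{m-1}}}-1$) involve $x_{m-1}$, so $p^{m-1}g_{m-1}=p^{m-1}fQ_d(a_{m-1},0)+p^{m-1}s_{-1}-s_{m-1}(\sigma^{p^{a_{m-1}}}-1)$ in $R_mG$. Now apply $\chi_{a_{m-1}}\colon R_mG\to R_mG_{a_{m-1}}$, under which $\sigma^{p^{a_{m-1}}}-1\mapsto 0$. Since $d\in U_1$, $Q_d(a_{m-1},0)\equiv P(a_{m-1},0)=(\sigma-1)^{p^{a_{m-1}}-1}\pmod p$, so $p^{m-1}Q_d(a_{m-1},0)=p^{m-1}(\sigma-1)^{p^{a_{m-1}}-1}$ in $R_mG_{a_{m-1}}$; and $p^{m-1}\chi_{a_{m-1}}(s_{-1})\in\langle p^{m-1}\chi_{a_{m-1}}(Q_d(n,0))\rangle$, which by Lemma~\ref{le:qhomo} vanishes if $a_{m-1}<n$ and equals $\langle p^{m-1}(\sigma-1)^{p^n-1}\rangle=\langle p^{m-1}(\sigma-1)^{p^{a_{m-1}}-1}\rangle$ if $a_{m-1}=n$. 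Hence $p^{m-1}\chi_{a_{m-1}}(g_{m-1})$ lies in the $1$-dimensional socle $\langle p^{m-1}(\sigma-1)^{p^{a_{m-1}}-1}\rangle$ of $R_mG_{a_{m-1}}$ (Lemma~\ref{le:starrmgi}), so it equals $\lambda\,p^{m-1}(\sigma-1)^{p^{a_{m-1}}-1}$ for some $\lambda\in R_m$. As both of these are polynomials of $\sigma$-degree $<p^{a_{m-1}}$ and $g_{m-1}$ was chosen of that form, the equality already holds in $R_mG$, so $p^{m-1}g_{m-1}\in\langle p^{m-1}(\sigma-1)^{p^{a_{m-1}}-1}\rangle$.

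I expect the main obstacle to be the free-module bookkeeping in the middle step — correctly recording which defining relations contribute to the $x_{m-1}$-coordinate, deploying condition~(\ref{it:exc.mod.indecom.condition...ai.vs.aj.inequality}) exactly where the lower generators $x_i$ must drop out, and pinning down $s_{-1}$ via Lemma~\ref{le:phidb}. A secondary subtlety is the term $s_{m-1}(\sigma^{p^{a_{m-1}}}-1)$, which is not in the target ideal on the nose; the reduction of $g_{m-1}$ together with the passage to $R_mG_{a_{m-1}}$ via $\chi_{a_{m-1}}$ is precisely what neutralizes it.
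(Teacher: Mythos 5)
Your proof is correct and follows essentially the same route as the paper's: both work in the free presentation on $y,x_0,\dots,x_{m-1}$, compare the $y$- and $x_{m-1}$-coefficients of an identity in the relation submodule, pin down the coefficient of the relation $(\sigma-d)y-\sum p^ix_i$ via Lemma~\ref{le:phidb}, and finish by observing that $p^{m-1}Q_d(a_{m-1},0)=p^{m-1}(\sigma-1)^{p^{a_{m-1}}-1}$ after reducing modulo $p$ and $\sigma^{p^{a_{m-1}}}-1$. The only organizational difference is that the paper first reduces to $v=ry$ and solves for the unknown relation-coefficient $f_y$, whereas you exhibit the explicit combination $fQ_d(a_{m-1},0)r_{-1}$ and then control the leftover $s_{-1}$ with Lemma~\ref{le:qhomo}; both are sound.
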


\begin{proof}
    By hypothesis (\ref{it:exc.mod.indecom.condition...ai.vs.aj.inequality}) we have $a_i\le a_{m-1}$ for all
    $0\le i\le m-1$.  Therefore in $R_mG$ we have
    $(\sigma^{p^{a_i}}-1) P(a_{m-1},a_i)= (\sigma^{p^{a_{m-1}}}-1)$.
    From the defining relations for $X$ we deduce that
    $(\sigma^{p^{a_{m-1}}} -1)x_i = 0$ for all $0\le i\le m-1$.
    Hence without loss of generality we may assume that $v=ry$ for
    $r\in R_mG$.

    Suppose then that $(\sigma^{p^{a_{m-1}}}-1)ry =
    p^{m-1}\sum_{i=0}^{m-1} g_ix_i$.  By the defining relations for
    $X$ we have
    \begin{equation}\label{eq:spec0}
        (\sigma^{p^{a_{m-1}}}-1)ry - p^{m-1}\sum_{i=0}^{m-1} g_ix_i =
        \\ f_y((\sigma-d)y-\sum_{i=0}^{m-1} p^ix_i) +
        \sum_{i=0}^{m-1} f_i(\sigma^{p^{a_i}}-1)x_i
    \end{equation}
    for some $f_y$ and $f_i$, $i=0$, $\dots,$ $m-1$, in $R_mG$.

    Consider first the coefficients of $x_{m-1}$ in
    \eqref{eq:spec0}. We have in $R_mG$
    \begin{equation*}
        -p^{m-1}g_{m-1}=-p^{m-1}f_y+f_{m-1}(\sigma^{p^{a_{m-1}}}-1).
    \end{equation*}
    Now let $\chi_{a_{m-1}}: R_m G\to R_mG_{a_{m-1}}$ be the natural
    $R_m$-homomorphism. We deduce that in $R_mG_{a_{m-1}}$,
    \begin{equation*}
        -p^{m-1}\chi_{a_{m-1}}(g_{m-1})
        =-p^{m-1}\chi_{a_{m-1}}(f_y).
    \end{equation*}
    By Lemma~\ref{le:kerbasic}\index{Lemma \ref{le:kerbasic}},
    $\chi_{a_{m-1}}(g_{m-1})=\chi_{a_{m-1}}(f_y) \mod
    pR_mG_{a_{m-1}}$.  Hence
    \begin{equation*}
        g_{m-1}=f_y \mod (p,\sigma^{p^{a_{m-1}}}-1)R_mG.
    \end{equation*}

    Now consider the coefficients of $y$ in \eqref{eq:spec0}:
    \begin{equation*}
        (\sigma^{p^{a_{m-1}}}-1)r = f_y(\sigma-d).
    \end{equation*}
    Applying $\chi_{a_{m-1}}$ to this equation, we have in
    $R_mG_{a_{m-1}}$
    \begin{equation*}
        0 = \chi_{a_{m-1}}(f_y)(\sigma-d).
    \end{equation*}

Since $a_{m-1}\neq -\infty$,  condition (\ref{it:exc.mod.indecom.condition...power.of.d})
    gives $d^{p^{a_{m-1}}}\in U_m$. By Lemma~\ref{le:phidb}\index{Lemma \ref{le:phidb}}, we have
    $f_y\in (Q_d(a_{m-1},0),\sigma^{p^{a_{m-1}}}-1)R_mG$. Observe
    that since $d\in U_1$ by hypothesis (\ref{it:exc.mod.indecom.condition...d.in.U1}), we get
    $Q_d(a_{m-1},0)=P(a_{m-1},0) \mod p\Z G$.  Hence $g_{m-1}\in
    (p,P(a_{m-1},0),\sigma^{p^{a_{m-1}}}-1)R_mG$ as well.

    Recalling that in $R_mG/pR_mG\simeq \Fp G$ we have
    $P(a_{m-1},0)=(\sigma-1)^{p^{a_{m-1}}-1}$ and
    $(\sigma^{p^{a_{m-1}}}-1)=(\sigma-1)^{p^{a_{m-1}}}$, we have
    obtained our result:
    \begin{equation*}
        p^{m-1}g_{m-1} \in \langle p^{m-1}(\sigma-1)^{
        p^{a_{m-1}}-1}\rangle \subset R_mG.
    \end{equation*}
\end{proof}

Recall that from the relations for $X_{\mathbf{a}, d, m}$ we have that
$l(y)=p^{a_0}+1$ and $l(x_i)=p^{a_i}$ for $i=0, \dots, m-1$.  In
what follows we will have frequent occasion to compare $l(y)$ to
$l(w)$ for $w\in X_{\mathbf{a},d,m}$ satisfying $l(w)=p^c$ for some $c$.
We observe in particular that if $l(y)<l(w)$ for such a $w$, then
since $1\le p^{a_0}+1$, we must have $c>0$ and therefore $l(w)\ge 2$.

\begin{lemma}\label{le:indecomp1b}\index{Lemma \ref{le:indecomp1b}}
    Assume the hypotheses of Theorem \ref{th:indecomp}\index{Theorem \ref{th:indecomp}} and let
    $m\ge 2$. Suppose that $w\in X_{\mathbf{a},d,m}$ satisfies $l(y)<l(w)=p^{a_{m-1}}$.
    Then
    \begin{enumerate}
        \item\label{it:ind1b1} $l(x_i)\le l(w)-2$ for all $0\le
        i<m-1$
        \item\label{it:ind1b2} $l(y)\le l(w)-2$.
    \end{enumerate}
\end{lemma}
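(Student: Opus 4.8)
The plan is to exploit the fact that $l(w)$ is computed in the reduction $X/pX \simeq \mathbb{F}_pG$-module, so everything reduces to a calculation over $\F_pG$. Write $\bar{X} = X_{\mathbf{a},d,m}/pX_{\mathbf{a},d,m}$, and note that over $\F_pG$ the defining relations become $(\sigma-1)\bar y = \sum_i p^i \bar x_i = \bar x_0$ (since $p\equiv 0$) and $(\sigma-1)^{p^{a_i}}\bar x_i = 0$; here I am using the fact recorded just before Lemma \ref{le:cycprop} that $P(a_i,0) = (\sigma-1)^{p^{a_i}-1}$ in $\F_pG$ and that $\sigma^{p^{a_i}}-1 = (\sigma-1)^{p^{a_i}}$ there. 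So $\bar X$ is generated as an $\F_pG$-module by $\bar y, \bar x_0,\dots,\bar x_{m-1}$ with $(\sigma-1)\bar y = \bar x_0$. In particular $\langle \bar y\rangle \supseteq \langle \bar x_0\rangle$, which re-proves $l(y) = l(x_0)+1 = p^{a_0}+1$, and more importantly the submodule $\langle \bar w\rangle$ has $\F_p$-dimension exactly $l(w) = p^{a_{m-1}}$ with $(\sigma-1)^{p^{a_{m-1}}-1}\bar w \neq 0$ and $(\sigma-1)^{p^{a_{m-1}}}\bar w = 0$.

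Now write $\bar w$ in terms of generators: $\bar w = f_y \bar y + \sum_{i=0}^{m-1} f_i \bar x_i$ for some $f_y, f_i \in \F_pG$. The idea is that $l(w) = p^{a_{m-1}}$ forces the ``$x_{m-1}$-part'' of $\bar w$ to be genuinely present — more precisely, that no generator other than $x_{m-1}$ can supply length as large as $p^{a_{m-1}}$. Indeed, the cyclic $\F_pG$-module generated by $f_i\bar x_i$ has length at most $l(x_i) = p^{a_i}$, and by condition (\ref{it:exc.mod.indecom.condition...ai.vs.aj.inequality}) (the weak form: $a_i < a_{m-1}$ for $0\le i<m-1$ with $a_i\ne-\infty$) we get $l(x_i) = p^{a_i} \le p^{a_{m-1}-1}$, hence $p^{a_i}+2 \le p^{a_{m-1}-1}+2 \le p^{a_{m-1}}$ whenever $a_{m-1}\ge 1$ (and $a_{m-1}\ge 1$ holds because $l(w) = p^{a_{m-1}} > l(y) = p^{a_0}+1 \ge 2$, as noted in the paragraph preceding the lemma). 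This gives (\ref{it:ind1b1}) for the $i$ with $a_i\neq -\infty$, and the case $a_i=-\infty$ is trivial since then $l(x_i)=p^{-\infty}=0\le l(w)-2$. For (\ref{it:ind1b2}): similarly $l(y) = p^{a_0}+1$, and $a_0 < a_{m-1}$ (again by the weak form of (\ref{it:exc.mod.indecom.condition...ai.vs.aj.inequality}), or $a_0=-\infty$ in which case $l(y)=1$), so $p^{a_0}+1 \le p^{a_{m-1}-1}+1 \le p^{a_{m-1}}-1 \le l(w)-2$ once we check $p^{a_{m-1}-1}+3 \le p^{a_{m-1}}$; this holds for $a_{m-1}\ge 1$ and $p\ge 2$ since $p^{a_{m-1}} - p^{a_{m-1}-1} = p^{a_{m-1}-1}(p-1) \ge p^{a_{m-1}-1} \ge 1$, and one pushes the estimate slightly: for $a_{m-1}=1$ we need $p^0+3 = 4 \le p$, which fails for $p=2,3$; so in those small cases a direct argument is needed — see below.

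The bulk of the work is therefore the boundary case $a_{m-1} = 1$ (forcing $p \ge 5$ for the crude inequality, so $p\in\{2,3\}$ must be handled separately) and, when $p=2$, also $a_{m-1}=2$. The hard part will be this: when $a_{m-1}$ is small, the crude bound $l(x_i)\le p^{a_{m-1}-1}$ plus a fixed additive slack is not enough, and one must instead argue more carefully that $l(w) = p^{a_{m-1}}$ together with $l(y) < l(w)$ actually forces $a_i \le a_{m-1}-1$ AND that the contributions collapse — for instance, when $a_{m-1}=1$ the hypothesis $l(w) = p$ and $l(y) < p$ forces $a_0 = -\infty$ hence $l(y)=1 \le p-2$ (valid for $p\ge 3$; for $p=2$, $l(w)=2$ and $l(y)<2$ means $l(y)=1$ and the claim $l(y)\le 0$ is false, so in fact when $p=2$ the hypothesis $l(y)<l(w)=2$ with the standing assumptions of Theorem \ref{th:indecomp} cannot occur, or the conclusion must be interpreted with the convention $p^{-\infty}=0$ — this edge case must be reconciled with condition (\ref{it:exc.mod.indecom.condition...a0.bounds})). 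I expect the clean way through is: first establish $a_{m-1}\ge 1$ and $a_i < a_{m-1}$ for all relevant $i$ from (\ref{it:exc.mod.indecom.condition...ai.vs.aj.inequality}), then observe $l(x_i) = p^{a_i} \le p^{a_{m-1}-1}$ and separately verify the inequality $p^{a_{m-1}-1} + 2 \le p^{a_{m-1}}$ holds in all cases $a_{m-1}\ge 1$, $p\ge 2$ — which it does, since $p^{a_{m-1}} - p^{a_{m-1}-1} = p^{a_{m-1}-1}(p-1) \ge 1\cdot 1 = 1$ is too weak but $\ge p-1 \ge 1$... actually for $a_{m-1}=1,p=2$ we get $2-1 = 1 < 2$, so the estimate genuinely fails and one concludes instead that the case $a_{m-1}=1$, $p=2$ is vacuous or contradicts the running hypotheses; I would isolate that as a short preliminary remark and then the general estimate goes through. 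So the real obstacle is bookkeeping the small-characteristic edge cases against conditions (\ref{it:exc.mod.indecom.condition...power.of.d})–(\ref{it:exc.mod.indecom.condition...a0.bounds}), not any deep structural difficulty.
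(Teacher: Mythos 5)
Your approach is the same as the paper's: reduce both parts to numerical inequalities among $p^{a_i}$, $p^{a_0}+1$ and $p^{a_{m-1}}$, use condition (\ref{it:exc.mod.indecom.condition...ai.vs.aj.inequality}) to compare exponents, and treat separately the small configurations where the crude estimate $p^{a_{m-1}-1}+2\le p^{a_{m-1}}$ fails. The gap is that those small configurations are the entire content of the lemma, and your proposal never actually disposes of them: ``vacuous or contradicts the running hypotheses'' and ``must be reconciled with condition (\ref{it:exc.mod.indecom.condition...a0.bounds})'' are placeholders, not arguments. Concretely, for part (\ref{it:ind1b1}) the only bad case is $p=2$, $a_i=0$, $a_{m-1}=1$, and ruling it out requires the full strength of (\ref{it:exc.mod.indecom.condition...ai.vs.aj.inequality}) --- namely $a_i+(m-1-i)<a_{m-1}$, which is violated for $i<m-1$ unless one is in the exceptional clause $i=0$, $a_0=0$, $d\notin U_2$ --- combined with the hypothesis $l(y)<l(w)$, which kills the exceptional clause because there $l(y)=2^{a_0}+1=2=l(w)$. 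Neither the weak form $a_i<a_{m-1}$ nor condition (\ref{it:exc.mod.indecom.condition...a0.bounds}) gets you there. For part (\ref{it:ind1b2}) the bad cases $(p,a_0,a_{m-1})=(2,1,2)$ and $(3,0,1)$ are excluded by (\ref{it:exc.mod.indecom.condition...ai.vs.aj.inequality}); note that your claim that $l(w)=p$ and $l(y)<p$ ``forces $a_0=-\infty$'' is false for $p=3$, where $a_0=0$ is compatible with $l(y)=2<3$ --- what forces $a_0=-\infty$ is again condition (\ref{it:exc.mod.indecom.condition...ai.vs.aj.inequality}), not the length hypothesis.

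Separately, the tension you detect at $p=2$, $l(w)=2$, $l(y)=1$ is real and cannot be dismissed as vacuous: the data $p=2$, $m=2$, $d=1$, $n\ge 2$, $\mathbf{a}=(-\infty,1)$ satisfy every hypothesis of Theorem \ref{th:indecomp}, and $w=x_{m-1}$ then has $l(w)=2>1=l(y)$, while conclusion (\ref{it:ind1b2}) would assert $1\le 0$. (The paper's own case analysis for (\ref{it:ind1b2}) solves $p^{a_0}+2=p^{a_{m-1}}$ only for $a_0\neq -\infty$ and so misses this solution.) A complete treatment must either exclude this configuration explicitly or verify that the later applications of the lemma need only $l(y)\le l(w)-1$ when $a_0=-\infty$. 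Flagging the problem, as you do, is the right instinct, but it is not a resolution, and the proposal as written does not yield a proof of either part.
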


\begin{proof}\

    (\ref{it:ind1b1}). First, from hypothesis (\ref{it:exc.mod.indecom.condition...ai.vs.aj.inequality}) we
    conclude that $a_i<a_{m-1}$ and hence
    $l(x_i)=p^{a_i}<l(w)=p^{a_{m-1}}$.  If $p^{a_i}> l(w)-2$, then we
    must have that $p^{a_i}=p^{a_{m-1}}-1$. But since
    $p^{a_{m-1}}\ge 2$ we conclude that $p=2$, $a_i=0$, and
    $a_{m-1}=1$.  By (\ref{it:exc.mod.indecom.condition...ai.vs.aj.inequality}), $a_i+1<a_{m-1}$ for $m-1>i$,
    so that we cannot have $a_i=0$ and $a_{m-1}=1$ for $i<m-1$,
    unless $p=2$ and $i=0$.  Then $l(y)=2^{a_0}+1=2$ and
    $l(w)=2^{a_{m-1}}=2$, contradicting $l(y)<l(w)$.
    Hence in all cases $l(x_i)\le l(w)-2$ for all $0\le i<m-1$.

    (\ref{it:ind1b2}). Since $l(y)=p^{a_0}+1 $ and we are in the
    case $l(y)<l(w)$, it remains to exclude the case
    $p^{a_0}+1=l(w)-1$.  In this case $p^{a_0}+2=p^{a_{m-1}}$.  Then
    one of the following cases occurs: $p=2$,
    $a_0=1$, $a_{m-1}=2$, contrary to (\ref{it:exc.mod.indecom.condition...ai.vs.aj.inequality}); or $p=3$, $a_0=0$,
    and $a_{m-1}=1$, again contrary to (\ref{it:exc.mod.indecom.condition...ai.vs.aj.inequality}).
\end{proof}

\begin{lemma}\label{le:indecomp2}\index{Lemma \ref{le:indecomp2}}
    Assume the hypotheses of Theorem \ref{th:indecomp}\index{Theorem \ref{th:indecomp}} and let $m\ge 2$.
    Suppose that $w\in X_{\mathbf{a},d,m}$ satisfies the following:
    \begin{itemize}
        \item $W=\langle w\rangle$ is a direct summand of $X_{\mathbf{a},d,m}$
        \item $W/p^{m-1}W$ is a free $R_{m-1} G_{a_{m-1}}$-module
        \item $l(y)<l(w)$.
    \end{itemize}
    Then
    \begin{enumerate}
        \renewcommand{\theenumi}{\alph{enumi}}
        \item\label{it:j1} $w = g_yy+\sum_{i=0}^{m-1}g_ix_i$ with
        $g_{m-1}\in U(R_mG)$, $g_i\in R_mG$ for $0\le i<m-1$, and
        $g_y\in R_m$
        \item\label{it:j2} $p^{m-1}(\sigma-1)w\in p^{m-1}\langle
        x_0,(\sigma-1)x_1,\dots,(\sigma-1)x_{m-1}\rangle$
        \item\label{it:j3} $(\sigma^{p^{a_{m-1}}}-1)w \in
        p^{m-1}\langle
        x_0,\dots,x_{m-2},(\sigma-1)^{p^{a_{m-1}}-1}x_{m-1}\rangle$
        \item\label{it:j4} $\langle
        p^{m-1}(\sigma-1)^{l(w)-2}w\rangle = \langle
        p^{m-1}(\sigma-1)^{l(w)-2}x_{m-1}\rangle \neq \{0\}$.
    \end{enumerate}
\end{lemma}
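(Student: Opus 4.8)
The plan is to use the freeness hypothesis to identify $l(w)$ with $p^{a_{m-1}}$, and then prove (\ref{it:j1})--(\ref{it:j4}) in that order; once (\ref{it:j1}) and Lemma~\ref{le:indecomp1b} are in hand, each of the four assertions is a short computation, and the genuine work is concentrated in (\ref{it:j3}). As a preliminary, since $W=\langle w\rangle$ is cyclic, the hypothesis that $W/p^{m-1}W$ is free over $R_{m-1}G_{a_{m-1}}$ means it is free of rank one; reducing modulo $p$ (legitimate as $m\ge 2$) shows $W/pW$ is free of rank one over $\Fp G_{a_{m-1}}$, and because $W$ is a direct summand of $X$ this gives $l(w)=p^{a_{m-1}}$. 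Together with $l(y)=p^{a_0}+1<l(w)$ this forces $p^{a_{m-1}}\ge 2$, hence $a_{m-1}\ge 1$ (in particular $a_{m-1}\ne -\infty$), and puts us under the hypotheses of Lemma~\ref{le:indecomp1b}, so $l(x_i)\le l(w)-2$ for $0\le i<m-1$ and $l(y)\le l(w)-2$. These length bounds will drive (\ref{it:j1}) and (\ref{it:j4}).

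For (\ref{it:j1}) I would first observe that iterating $\sigma y=dy+\sum_i p^i x_i$ gives $\sigma^k y=d^k y+z_k$ with $z_k$ in the $R_mG$-submodule generated by the $x_i$, so that every $f(\sigma)y$ equals a scalar multiple of $y$ plus an element of $\sum_i\langle x_i\rangle$; hence $X=R_m\,y+\sum_i\langle x_i\rangle$ and $w$ can be written $w=g_yy+\sum_{i=0}^{m-1}g_ix_i$ with $g_y\in R_m$ and $g_i\in R_mG$. Since $W$ is a direct summand, $w\notin\Ic X$, so $\bar w\ne 0$ in $X/pX$. Using that $l$ is subadditive along sums ($l(\sum u_j)\le\max_j l(u_j)$, because $\langle\overline{\sum u_j}\rangle\subseteq\sum_j\langle\bar u_j\rangle$), each of $g_yy$ and $g_ix_i$ ($i<m-1$) contributes length $\le l(w)-2$; were $g_{m-1}$ a nonunit, $g_{m-1}x_{m-1}$ would contribute length $\le l(x_{m-1})-1=l(w)-1$, forcing $l(\bar w)<l(w)$, a contradiction — so $g_{m-1}\in U(R_mG)$. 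For (\ref{it:j2}), I would rewrite $(\sigma-1)y=(\sigma-d)y+(d-1)y=\sum_i p^ix_i+(d-1)y$; since $d\in U_1$ we have $p^{m-1}(d-1)=0$ in $R_m$, and $p^{m-1}\sum_i p^ix_i=p^{m-1}x_0$, so $p^{m-1}(\sigma-1)w=p^{m-1}g_yx_0+p^{m-1}\sum_{i=0}^{m-1}g_i(\sigma-1)x_i$, which lies in $p^{m-1}\langle x_0,(\sigma-1)x_1,\dots,(\sigma-1)x_{m-1}\rangle$ because $(\sigma-1)x_0\in\langle x_0\rangle$ and $g_i(\sigma-1)x_i\in\langle(\sigma-1)x_i\rangle$ for $i\ge 1$.

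Part (\ref{it:j3}) is the crux. By the weak form of~(\ref{it:exc.mod.indecom.condition...ai.vs.aj.inequality}), $a_i\le a_{m-1}$ whenever $a_i\ne-\infty$ (and $x_i=0$ when $a_i=-\infty$), so $\sigma^{p^{a_{m-1}}}-1=P(a_{m-1},a_i)(\sigma^{p^{a_i}}-1)$ gives $(\sigma^{p^{a_{m-1}}}-1)x_i=0$ for all $i$, whence $(\sigma^{p^{a_{m-1}}}-1)w=g_y(\sigma^{p^{a_{m-1}}}-1)y$. Using $\sigma^{p^{a_{m-1}}}-d^{p^{a_{m-1}}}=Q_d(a_{m-1},0)(\sigma-d)$ together with $d^{p^{a_{m-1}}}\in U_m$ (condition~(\ref{it:exc.mod.indecom.condition...power.of.d}), so $(d^{p^{a_{m-1}}}-1)y=0$), this becomes $(\sigma^{p^{a_{m-1}}}-1)w=g_y\sum_{i=0}^{m-1}p^iQ_d(a_{m-1},0)x_i$. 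For the top term, $Q_d(a_{m-1},0)\equiv P(a_{m-1},0)\equiv(\sigma-1)^{p^{a_{m-1}}-1}\pmod p$ and $p^{m-1}p=0$ give $p^{m-1}Q_d(a_{m-1},0)x_{m-1}=p^{m-1}(\sigma-1)^{p^{a_{m-1}}-1}x_{m-1}$, exactly the $x_{m-1}$-contribution allowed by (\ref{it:j3}). For $0\le i<m-1$ with $a_i\ne-\infty$, the remaining task is to show $p^iQ_d(a_{m-1},0)x_i=p^i\chi_{a_i}(Q_d(a_{m-1},0))x_i\in p^{m-1}\langle x_i\rangle$; by Lemma~\ref{le:qhomo} (and Lemma~\ref{le:phi}) this reduces in the generic case to the inequality $i+(a_{m-1}-a_i)\ge m-1$, which follows from (\ref{it:exc.mod.indecom.condition...ai.vs.aj.inequality}) applied with $j=m-1-i$, and in the $p=2$ exceptional cases (where $a_0=0$) to $\chi_0(Q_d(a_{m-1},0))\in 2^{m-1}R_m$, which follows from (\ref{it:exc.mod.indecom.condition...ai.vs.v.inequality}) via Lemma~\ref{le:upower} together with $a_{m-1}\ge 1$. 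Summing the contributions proves (\ref{it:j3}); I expect this case analysis over the $p=2$ exceptional hypotheses to be the most delicate point of the lemma.

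Finally, for (\ref{it:j4}) I would apply $p^{m-1}(\sigma-1)^{l(w)-2}$ to $w=g_yy+\sum_ig_ix_i$: each of $y$ and $x_0,\dots,x_{m-2}$ has length $\le l(w)-2$ by the preliminary step, so $(\sigma-1)^{l(w)-2}$ carries it into $pX$ and $p^{m-1}(\sigma-1)^{l(w)-2}$ annihilates it, leaving $p^{m-1}(\sigma-1)^{l(w)-2}w=g_{m-1}\,p^{m-1}(\sigma-1)^{l(w)-2}x_{m-1}$. Since $g_{m-1}\in U(R_mG)$ by (\ref{it:j1}), the two cyclic submodules coincide; and since $\langle x_{m-1}\rangle\simeq R_mG_{a_{m-1}}$ is $R_m$-free on $\{1,\sigma,\dots,\sigma^{p^{a_{m-1}}-1}\}$ and $(\sigma-1)^{l(w)-2}=(\sigma-1)^{p^{a_{m-1}}-2}$ has degree $p^{a_{m-1}}-2<p^{a_{m-1}}$ with leading coefficient $1$, the element $p^{m-1}(\sigma-1)^{l(w)-2}x_{m-1}$ has an $R_m$-coordinate equal to $p^{m-1}\ne 0$, so it is nonzero, completing (\ref{it:j4}).
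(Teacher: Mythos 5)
Your preliminary reductions ($l(w)=p^{a_{m-1}}$ via the direct-summand and freeness hypotheses, hence $a_{m-1}\ge 1$ and the length bounds of Lemma~\ref{le:indecomp1b}) and your proofs of (\ref{it:j1}) and (\ref{it:j2}) agree with the paper's in substance. Part (\ref{it:j3}) is where you genuinely diverge, and your route checks out: the paper instead uses freeness of $W/p^{m-1}W$ to write $(\sigma^{p^{a_{m-1}}}-1)w=p^{m-1}fw$, applies $\phi_d^{(m)}$ to split into the cases $\phi_d(f)\equiv 0$ or $\phi_d(g_y)\equiv 0\bmod p$, and then still needs Lemma~\ref{le:indecomp1} to recover the factor $(\sigma-1)^{p^{a_{m-1}}-1}$ on the $x_{m-1}$-component; you bypass all of this by annihilating every $x_i$ with $\sigma^{p^{a_{m-1}}}-1$, reducing to $g_yQ_d(a_{m-1},0)\sum_ip^ix_i$ via condition~(\ref{it:exc.mod.indecom.condition...power.of.d}), and estimating each $\chi_{a_i}(Q_d(a_{m-1},0))$ with Lemma~\ref{le:qhomo} and conditions~(\ref{it:exc.mod.indecom.condition...ai.vs.aj.inequality}) and~(\ref{it:exc.mod.indecom.condition...ai.vs.v.inequality}). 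I verified the numerics in both the generic case and the $p=2$, $d\notin U_2$, $a_0=0$ case; your argument is more direct, pins down the $x_{m-1}$-component exactly, and renders Lemma~\ref{le:indecomp1} unnecessary for this part.

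The gap is in the nonvanishing claim of (\ref{it:j4}). You correctly reduce to showing $p^{m-1}(\sigma-1)^{p^{a_{m-1}}-2}x_{m-1}\neq 0$, but you justify this by asserting that $\langle x_{m-1}\rangle\simeq R_mG_{a_{m-1}}$ is $R_m$-free on $1,\sigma,\dots,\sigma^{p^{a_{m-1}}-1}$. That isomorphism is stated as a passing remark in the paper's introduction but is never proved, and it is not automatic from the presentation: in a module given by generators and relations, a single generator can satisfy relations beyond those explicitly imposed on it, because combinations of the remaining relations (here, multiples of $(\sigma-d)y-\sum_ip^ix_i$ whose $y$- and $x_i$-coefficients for $i<m-1$ cancel) may land inside $R_mGx_{m-1}$. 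Ruling this out is a computation with $\ann_{R_mG}(\sigma-d)$ of the same depth as the rest of the lemma, which is precisely why the paper's own proof of (\ref{it:j4}) (and of the analogous nonvanishing claims in Lemma~\ref{le:casea}) argues from the syzygies: it supposes $p^{m-1}(\sigma-1)^{l(w)-2}x_{m-1}$ is a combination of the defining relations, compares coefficients of $x_{m-1}$ and of $y$, and derives the contradiction $(\sigma-1)^{l(w)-1}\in\langle(\sigma-1)^{p^{a_{m-1}}+1}\rangle$ in $\Fp G$. As written, the key inequality of (\ref{it:j4}) is assumed rather than proved; you should either reproduce an argument of that type or first establish the freeness of $\langle x_{m-1}\rangle$ from the presentation.
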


\begin{proof}
    Since $W/p^{m-1}W$ is a free $R_{m-1}G_{a_{m-1}}$-module, $W/pW$
    is a free $\Fp G_{a_{m-1}}$-module.  Since $W$ is a direct
    summand, $l_{X_{\mathbf{a},d,m}}(w)=l_W(w)=p^{a_{m-1}}$.

    (\ref{it:j1}). Let $w=g_yy+\sum_{i=0}^{m-1}g_ix_i$ with $g_y$
    and $g_i$, $i=0, 1, \dots, m-1$ elements of $R_mG$.  Since
    $(\sigma-d)y = \sum_{i=0}^{m-1} p^ix_i$, we may assume that
    $g_y\in R_m$.

    Suppose that $g_{m-1}\not\in U(R_m G)$. Then $g_{m-1}\in \langle
    (\sigma-1),p\rangle$. Multiplying $w$ by $(\sigma-1)^{l(w)-1}$
    and using Lemma~\ref{le:indecomp1b}\index{Lemma \ref{le:indecomp1b}}, we obtain
    \begin{align*}
       (\sigma-1)^{l(w)-1}w &= (\sigma-1)^{l(w)-1}g_{m-1}x_{m-1}
       &\mod pX\\ &\in (\sigma-1)^{p^{a_{m-1}}-1}\langle
       (\sigma-1,p)\rangle x_{m-1} &\mod pX\\ &= 0 &\mod pX
    \end{align*}
    contrary to the value of $l_X(w)$.  Hence $g_{m-1}\in U(R_m
    G)$ and we have established (\ref{it:j1}).

    (\ref{it:j2}). Consider $p^{m-1}(\sigma-1)w$.  Write $w=g_y y +
    \sum_{i=0}^{m-1}g_ix_i$ as above, and observe that since $d\in
    U_1$ by (\ref{it:exc.mod.indecom.condition...d.in.U1}), $p^{m-1}(\sigma-1)=p^{m-1}(\sigma-d)$ in
    $R_mG$.  Then in $R_m G$ we have
    \begin{equation*}
        p^{m-1}(\sigma-1)w = p^{m-1}\left(
        g_y(\sigma-d)y + \sum_{i=0}^{m-1}g_i(\sigma-1)x_i\right).
    \end{equation*}
    Using the relation $(\sigma-d)y=\sum_{i=0}^{m-1}p^ix_i$, we
    deduce that in $R_mG$
    \begin{align*}
        p^{m-1}(\sigma-1)w &= p^{m-1}\left(
        g_y\left(\sum_{i=0}^{m-1}p^ix_i\right) +
        \sum_{i=0}^{m-1}g_i(\sigma-1)x_i\right) \\
        &= \left( g_y\sum_{i=0}^{m-1} p^{m-1+i}x_i\right)
        + p^{m-1}\sum_{i=0}^{m-1} g_i(\sigma-1)x_i \\
        &= p^{m-1}\left(g_y x_0+\sum_{i=0}^{m-1}g_i(\sigma-1)x_i\right)
        \\ &\in p^{m-1}\langle x_0,(\sigma-1)x_1,\dots,(\sigma-1)x_{m-1}
        \rangle,
    \end{align*}
    as desired.

    (\ref{it:j3}). Since $w$ is a generator for $W$ and $W/p^{m-1}W$
    is a free $R_{m-1} G_{a_{m-1}}$-module,
    \begin{equation*}
      (\sigma^{p^{a_{m-1}}}-1)w = p^{m-1}fw
    \end{equation*}
    for some $f\in R_mG$. Writing $w=g_y y +
    \sum_{i=0}^{m-1}g_ix_i$ as in part (\ref{it:j1}), we obtain from the
    defining relations of $X$ that
    \begin{equation}
        \left((\sigma^{p^{a_{m-1}}}-1)-p^{m-1}f\right)\left(g_yy +
        \sum_{i=0}^{m-1} g_ix_i\right) = \\
        f_y\left((\sigma-d)y-\sum_{i=0}^{m-1}p^ix_i\right)+ \sum_{i=0}^{m-1}
        f_i(\sigma^{p^{a_i}}-1)x_i
    \end{equation}
    for $f_y$ and $f_i$, $i=0, 1, \dots, m-1$, elements of $R_mG$.

    Consider the coefficient of $y$:
    \begin{equation*}
      (\sigma^{p^{a_{m-1}}}-1)g_y-p^{m-1}fg_y = f_y(\sigma-d).
    \end{equation*}

    Now by (\ref{it:exc.mod.indecom.condition...power.of.d}), $d^{p^{a_{m-1}}} \in U_m$. Since $a_{m-1}\le
    n$, we get $d^{p^{n}}\in U_m$.
    Applying $\phi_d^{(m)}$ to the previous equation, we deduce that in
    $R_m$
    \begin{equation*}
        -p^{m-1}\phi_d^{(m)}(f)\phi_d^{(m)}(g_y)=0.
    \end{equation*}
    Hence either $\phi_d(f)=0 \mod p$ or $\phi_d(g_y)=0\mod p$.

    If $\phi_d(f)=0\mod p$, then
    \begin{equation*}
        f=pe+(\sigma-d)g
    \end{equation*}
    for some $e, g\in R_mG$.  Observe that since $d\in U_1$ by
    (\ref{it:exc.mod.indecom.condition...d.in.U1}), $p^{m-1}(\sigma-d) = p^{m-1}(\sigma-1)$.
    Hence we obtain
    \begin{equation*}
        (\sigma^{p^{a_{m-1}}}-1)w = p^{m-1}fw = gp^{m-1}(\sigma-1)w.
    \end{equation*}
    Applying part (\ref{it:j2}), we obtain
    \begin{equation*}
        (\sigma^{p^{a_{m-1}}}-1)w \in p^{m-1}
        \langle x_0,\dots,x_{m-2},(\sigma-1)x_{m-1}\rangle.
    \end{equation*}
    Then, by Lemma~\ref{le:indecomp1}\index{Lemma \ref{le:indecomp1}},
    \begin{equation*}
        (\sigma^{p^{a_{m-1}}}-1)w \in p^{m-1}
        \langle x_0, \dots,x_{m-2}, (\sigma-1)^{p^{a_{m-1}}-1}
        x_{m-1}\rangle.
    \end{equation*}

    If, on the other hand, $\phi_d(g_y)=0\mod p$, then
    \begin{equation*}
        g_y=pe+(\sigma-d)g
    \end{equation*}
    for some $e, g\in R_mG$.  Because $(\sigma-d) y =
    \sum_{i=0}^{m-1} p^ix_i$,
    \begin{equation*}
        (\sigma^{p^{a_{m-1}}}-1)w = p^{m-1}f(g_yy + \sum_{i=0}^{m-1}
        g_ix_i) \in p^{m-1}\langle x_0,\dots,x_{m-1}\rangle.
    \end{equation*}
    By Lemma~\ref{le:indecomp1}\index{Lemma \ref{le:indecomp1}},
    \begin{equation*}
        (\sigma^{p^{a_{m-1}}}-1)w \in p^{m-1}\langle
        x_0,\dots,x_{m-2}, (\sigma-1)^{p^{a_{m-1}}-1}x_{m-1}\rangle.
    \end{equation*}
    In either case, then, we have established part (\ref{it:j3}).

    (\ref{it:j4}). Now for any $x\in X$, $p^{m-1}(\sigma-1)^{l(x)}x=0$.  Hence applying $(\sigma-1)^{l(w)-2}$ to our expression $w = g_yy+\sum_{i=0}^{m-1}g_ix_i$, and recalling Lemma~\ref{le:indecomp1b}\index{Lemma \ref{le:indecomp1b}}, shows $$p^{m-1}(\sigma-1)^{l(w)-2}w = p^{m-1}(\sigma-1)^{l(w)-2}g_{m-1}x_{m-1}.$$  Therefore we may assume without loss of generality that $g_i=0$ for all $0\le i<m-1$ and $g_y=0$ as well.  Then $w=g_{m-1}x_{m-1}$.

    Suppose that $p^{m-1}(\sigma-1)^{l(w)-2}w =0$. By multiplying by
    a unit of $R_mG$ we may assume that $g_{m-1}=1$.

    From the defining relations of $X$ we deduce
    \begin{equation}\label{eq:indecomeq1}
        p^{m-1}(\sigma-1)^{l(w)-2}x_{m-1}=\\ f_y((\sigma-d)y-
        \sum_{i=0}^{m-1}
        p^ix_i)+\sum_{i=0}^{m-1} f_i(\sigma^{p^{a_i}}-1)x_i
    \end{equation}
    for $f_y\in R_mG$ and $f_i$, $i=0, 1,\dots, m-1$, elements of
    $R_mG$.

    Consider the coefficients of $x_{m-1}$ in \eqref{eq:indecomeq1}:
    \begin{equation*}
      p^{m-1}(\sigma-1)^{l(w)-2}=-p^{m-1}f_y +
      f_{m-1}(\sigma^{p^{a_{m-1}}}-1).
    \end{equation*}
    Now apply the natural $R_m$-homomorphism $\chi_{a_{m-1}}:R_m G
    \to R_m G_{a_{m-1}}$. Then we have
    \begin{equation*}
        p^{m-1}\chi_{a_{m-1}}(\sigma-1)^{l(w)-2}=
        -p^{m-1}\chi_{a_{m-1}}(f_y).
    \end{equation*}
    Now $\ann_{R_m G_{a_{m-1}}} p^{m-1} = pR_{m} G_{a_{m-1}}$ by
    Lemma~\ref{le:kerbasic}\index{Lemma \ref{le:kerbasic}}, so we have for a suitable element $h_y\in
    R_m G$
    \begin{equation*}
        \chi_{a_{m-1}}(f_y) = -\chi_{a_{m-1}}(\sigma-1)^{l(w)-2} +
        p\chi_{a_{m-1}}(h_y).
    \end{equation*}
    Therefore
    \begin{equation}\label{eq:spec5}
        f_y = -(\sigma-1)^{l(w)-2} +p h_y +
        r_y(\sigma^{p^{a_{m-1}}}-1), \quad r_y\in R_m G.
    \end{equation}

    Now consider the coefficients of $y$ in \eqref{eq:indecomeq1}:
    \begin{equation*}
        0 = f_y(\sigma-d).
    \end{equation*}
    Since by (\ref{it:exc.mod.indecom.condition...d.in.U1}), $d\in U_1$, we have $0=(\sigma-1)f_y
    \mod pR_m G$. Putting this result together with \eqref{eq:spec5}
    and the fact that $(\sigma^{p^{a_{m-1}}}-1) =
    (\sigma-1)^{p^{a_{m-1}}}$ modulo $pR_mG$, we obtain
    \begin{equation*}
        0 = -(\sigma-1)^{l(w)-1} + r_y(\sigma-1)^{p^{a_{m-1}}+1} \mod
        pR_m G.
    \end{equation*}

    Observing that $R_m G/pR_m G\simeq \Fp G$ and that, in $\Fp G$,
    $(\sigma-1)^{l(w)-1}\notin \langle
    (\sigma-1)^{p^{a_{m-1}}+1}\rangle$ since
    $0<l(w)-1<p^{a_{m-1}}\le p^n$, we have reached a contradiction.
    Hence
    \begin{equation*}
        p^{m-1}(\sigma-1)^{p^{a_{m-1}}-2}w \neq 0.
    \end{equation*}
    Moreover, since $w=g_{m-1}x_{m-1}$ for $g_{m-1}\in U(R_mG)$,
    $\langle w\rangle = \langle x_{m-1}\rangle$ and we
    have
    \begin{equation*}
        \langle p^{m-1}(\sigma-1)^{l(w)-2}w \rangle =
        \langle p^{m-1}(\sigma-1)^{l(w)-2}x_{m-1} \rangle
        \neq \{0\},
    \end{equation*}
    as desired.
\end{proof}

\begin{lemma}\label{le:indecomp3}\index{Lemma \ref{le:indecomp3}}
    Assume the hypotheses of Theorem \ref{th:indecomp}\index{Theorem \ref{th:indecomp}} and let
    $m\ge 2$.  Suppose that $w\in X$ satisfies the following:
    \begin{itemize}
        \item $W=\langle w\rangle$ is a direct summand of $X$
        \item $W/p^{m-1}W$ is a free $R_{m-1}G_{a_{m-1}}$-module
        \item $l(y)<l(w)$
        \item $a_i\neq -\infty$ for some $0\le i<m-1$.
    \end{itemize}
    Suppose $\proj_{W} y=g_ww$ for some $g_w\in R_mG$.  Let
    \begin{equation*}
        c=\max\{a_i : 0\le i<m-1\}.
    \end{equation*}
    Then
    \begin{equation*}
        (\sigma^{p^{c}}-1)(\sigma-d)g_w
        \in \langle p^{m-1}(\sigma-d)(\sigma-1)^{p^c},
        (\sigma^{p^{a_{m-1}}}-1)\rangle.
    \end{equation*}
\end{lemma}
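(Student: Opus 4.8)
The plan is to push the defining relation of $X$ forward through the idempotent $R_mG$-endomorphism $\pi=\proj_W$ of $X$ with image $W$. Write $\pi(y)=g_ww$ and $\pi(x_i)=h_iw$ with $h_i\in R_mG$ for $0\le i\le m-1$. First I would show that $h_{m-1}\in U(R_mG)$. Since $W/p^{m-1}W$ is free of rank one over $R_mG_{a_{m-1}}$, the reduction $W/pW$ is isomorphic to $\Fp G_{a_{m-1}}$, which is uniserial of $\Fp$-dimension $p^{a_{m-1}}$; and since $\pi$ maps $X$ onto $W$, the images of $\pi(y)$ and of the $\pi(x_i)$ generate $W/pW$. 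But $l(g_ww)=l(\pi(y))\le l(y)<l(w)=p^{a_{m-1}}$, and for $i<m-1$ we have $l(h_iw)=l(\pi(x_i))\le l(x_i)=p^{a_i}\le p^c<p^{a_{m-1}}$ by (the weak form of) hypothesis~(\ref{it:exc.mod.indecom.condition...ai.vs.aj.inequality}); in a uniserial module a finite generating set must contain an element of maximal length, so $l(h_{m-1}w)=p^{a_{m-1}}$, whence $\overline{h_{m-1}}$ generates $\Fp G_{a_{m-1}}$ and $h_{m-1}\notin\Ic$. Applying $\pi$ to $\sigma^{p^{a_{m-1}}}x_{m-1}=x_{m-1}$ then gives $(\sigma^{p^{a_{m-1}}}-1)h_{m-1}w=0$, so $(\sigma^{p^{a_{m-1}}}-1)w=0$; thus $W$ is a cyclic $R_mG_{a_{m-1}}$-module, and combining this with Lemma~\ref{le:indecomp2}(\ref{it:j4}) (which shows $p^{m-1}(\sigma-1)^{p^{a_{m-1}}-2}w\ne 0$) pins $\ann_{R_mG_{a_{m-1}}}(w)$ down to either $\{0\}$ or $\langle p^{m-1}(\sigma-1)^{p^{a_{m-1}}-1}\rangle$.

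Second, I would multiply the defining relation $(\sigma-d)y=\sum_{i=0}^{m-1}p^ix_i$ by $(\sigma^{p^c}-1)$. For each $i<m-1$ one has $(\sigma^{p^c}-1)x_i=0$ in $X$: if $a_i=-\infty$ then $x_i=0$, and otherwise $0\le a_i\le c$, so the identity $(\sigma^{p^{a_i}}-1)P(c,a_i)=(\sigma^{p^c}-1)$ together with $\langle x_i\rangle\simeq R_mG_{a_i}$ yields the claim. Hence in $X$
\begin{equation*}
    (\sigma^{p^c}-1)(\sigma-d)y = p^{m-1}(\sigma^{p^c}-1)x_{m-1}.
\end{equation*}
Applying $\pi$ and the first step, $(\sigma^{p^c}-1)(\sigma-d)g_ww=p^{m-1}(\sigma^{p^c}-1)h_{m-1}w$ in $W$, so $(\sigma^{p^c}-1)(\sigma-d)g_w-p^{m-1}(\sigma^{p^c}-1)h_{m-1}\in\ann_{R_mG}(w)$, the latter being the preimage under $\chi_{a_{m-1}}$ of $\ann_{R_mG_{a_{m-1}}}(w)$.

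Finally, it remains to draw the conclusion. Using $d\in U_1$, so that $p^{m-1}(\sigma-d)=p^{m-1}(\sigma-1)$, together with $\sigma^{p^c}-1\equiv(\sigma-1)^{p^c}\bmod p$, one rewrites $p^{m-1}(\sigma^{p^c}-1)h_{m-1}=h_{m-1}\,p^{m-1}(\sigma-d)(\sigma-1)^{p^c-1}$; feeding this in together with the two possibilities for $\ann_{R_mG}(w)$ found above — each contained in $\langle\sigma^{p^{a_{m-1}}}-1,\,p^{m-1}(\sigma-1)^{p^{a_{m-1}}-1}\rangle$ — should place $(\sigma^{p^c}-1)(\sigma-d)g_w$ inside $\langle p^{m-1}(\sigma-d)(\sigma-1)^{p^c},\,\sigma^{p^{a_{m-1}}}-1\rangle$. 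I expect this last step to be the main obstacle: it is a $(\sigma-1)$-adic and $p$-adic order comparison in $R_mG_{a_{m-1}}$ that must rule out the borderline exponent combinations (roughly, where $p^{a_{m-1}}$ is only slightly larger than $p^c$), and it is here that the $p=2$ refinements (\ref{it:exc.mod.indecom.condition...a0.bounds}) and (\ref{it:exc.mod.indecom.condition...ai.vs.v.inequality}) of Theorem~\ref{th:indecomp} should enter. An alternative to the abstract annihilator computation is to substitute $w=g_yy+\sum_i g_ix_i$ (with $g_{m-1}$ a unit, as in Lemma~\ref{le:indecomp2}(\ref{it:j1})) into the displayed equality, view it as a combination of the defining relations of $X$, and track the coefficients of $y$ and of $x_{m-1}$ under $\chi_{a_{m-1}}$, exactly as in the proof of Lemma~\ref{le:indecomp1}.
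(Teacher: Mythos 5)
There is a genuine gap, and it sits exactly where you predicted the ``main obstacle'' would be --- but the obstacle is structural, not a matter of borderline exponent cases. Your steps deriving $h_{m-1}\in U(R_mG)$ and the projected identity are fine, but the single relation $(\sigma^{p^c}-1)(\sigma-d)y=p^{m-1}(\sigma^{p^c}-1)x_{m-1}$ only yields
$(\sigma^{p^c}-1)(\sigma-d)g_w\equiv p^{m-1}(\sigma-1)^{p^c}h_{m-1}\bmod \ann_{R_mG}(w)$,
and the right-hand side is one power of $(\sigma-1)$ short of the target generator $p^{m-1}(\sigma-d)(\sigma-1)^{p^c}=p^{m-1}(\sigma-1)^{p^c+1}$. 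That discrepancy cannot be absorbed by any order comparison: since $h_{m-1}$ is a unit, and since Lemma~\ref{le:indecomp2}(\ref{it:j4}) together with the inequality $p^{a_{m-1}}-2\ge p^c$ (established from (\ref{it:exc.mod.indecom.condition...ai.vs.aj.inequality}) in Case B2 of the proof of the theorem) gives $p^{m-1}(\sigma-1)^{p^c}w\neq 0$ and hence $p^{m-1}(\sigma-1)^{p^c}w\notin\langle p^{m-1}(\sigma-1)^{p^c+1}w\rangle$ (the $\Fp G$-module $p^{m-1}W$ is cyclic, hence uniserial), the element $p^{m-1}(\sigma-1)^{p^c}h_{m-1}$ does not lie in $\langle p^{m-1}(\sigma-1)^{p^c+1},\,\sigma^{p^{a_{m-1}}}-1\rangle+\ann_{R_mG}(w)$, whichever of your candidate annihilators occurs. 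So the conclusion is not reachable from this one relation; it simply does not carry enough information.

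The missing input is a finer family of relations, and this is where the technical lemmas of Section~\ref{se:normoperators} enter. The paper's proof uses, for each $i<m-1$ with $a_i\neq-\infty$, the congruence $p^{e_i}(\sigma^{p^{a_i}}-1)(\sigma-d)y\equiv 0\bmod p^{m-1}X$ with carefully chosen exponents $e_i$ (plus $p^{e_0}(\sigma-d)y\equiv 0$ when $a_0=-\infty$). Projecting these to $W$ and using the freeness of $W/p^{m-1}W$ places $\overline{(\sigma-d)\chi_{a_{m-1}}(g_w)}$ in a multiply-generated annihilator, which is identified by Lemma~\ref{le:kerint} and then lifted from $R_{m-1}G_{a_{m-1}}$ to $R_mG_{a_{m-1}}$ by Lemma~\ref{le:tech}. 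It is inside Lemma~\ref{le:tech} that the generator $p^{m-1}$ gets upgraded to $p^{m-1}(\sigma-d)$, via the evaluation homomorphism $\phi_d^{(m)}$ and the arithmetic hypotheses (\ref{it:exc.mod.indecom.condition...power.of.d}), (\ref{it:exc.mod.indecom.condition...ai.vs.aj.inequality}), (\ref{it:exc.mod.indecom.condition...ai.vs.v.inequality}) fed through Lemma~\ref{le:phi}; that upgrade is the source of the extra factor of $(\sigma-1)$ you are missing. Only afterwards does one multiply by $(\sigma^{p^c}-1)$, which annihilates all the norm-operator generators $P(a_{m-1},c_j)$ and leaves precisely $\langle p^{m-1}(\sigma-d)(\sigma-1)^{p^c}\rangle$ modulo $(\sigma^{p^{a_{m-1}}}-1)$. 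Your closing alternative (substituting $w=g_yy+\sum g_ix_i$ and tracking coefficients as in Lemma~\ref{le:indecomp1}) runs into the same wall, because it again processes only the single coarse relation.
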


\begin{proof}
    Let $S=\{i : 0\le i\le m-2,\ a_i\neq -\infty\}$ and $s$ be the
    cardinality of $S$.  By hypothesis, $s\ge 1$ and by (\ref{it:exc.mod.indecom.condition...ai.vs.aj.inequality}),
    if $j<i$ are elements of $S$, we have $a_j<a_i$.

    We begin with the relation
    \begin{equation*}
        (\sigma-d)y = \sum_{i=0}^{m-2} p^ix_i \mod p^{m-1}X
    \end{equation*}
    and approach the conditions of Lemma~\ref{le:tech}\index{Lemma \ref{le:tech}}.

    Suppose that $j<i$ are elements of $S$.  Then since
    $(\sigma^{p^{a_j}}-1) P(a_i,a_j)=(\sigma^{p^{a_i}}-1)$ in $R_{m}G$,
    $(\sigma^{p^{a_i}}-1)x_j=0$ for all $j\le i$ elements of
    $S$.  Now for $i\in S$ set
    \begin{equation*}
        e_i:=(m-1)-\min \{j\in S\cup \{m-1\} : i<j\}.
    \end{equation*}
    We deduce that for each $i\in S$
    \begin{align*}
        p^{e_i}(\sigma^{p^{a_i}}-1)(\sigma-d)y &=
        p^{e_i}(\sigma^{p^{a_i}}-1)\sum_{j=0}^{m-2} p^jx_j &\mod
        p^{m-1}X\\ &= p^{e_i}(\sigma^{p^{a_i}}-1)\sum_{j\in S}
        p^jx_j&\mod
        p^{m-1}X \\ &= p^{e_i}(\sigma^{p^{a_i}}-1)\sum_{j>i,\ j\in S}
        p^jx_j &\mod p^{m-1}X \\
        &= 0 &\mod p^{m-1}X.
    \end{align*}
    If $a_0=-\infty$, then $0\not\in S$; in this case, define $e_0:=(m-1)-\min\{S\}$, and note that
    \begin{equation*}
        p^{e_0}(\sigma-d)y= p^{e_0}\sum_{j=1}^{m-1} p^jx_j = 0 \mod
        p^{m-1}X.
    \end{equation*}

    Since $g_ww=\proj_W y$ and $W/p^{m-1}W$ is a free
    $R_{m-1}G_{a_{m-1}}$-module generated by $w$, we deduce that for
    all $i\in S$,
    \begin{equation*}
        (\sigma-d)p^{e_i}(\sigma^{p^{a_i}}-1)g_w = 0 \
        \mod (p^{m-1},\sigma^{p^{a_{m-1}}}-1)R_mG.
    \end{equation*}
    Moreover, if $a_0=-\infty$, then
    \begin{equation*}
        (\sigma-d)p^{e_0}g_w= 0 \mod
        (p^{m-1},\sigma^{p^{a_{m-1}}}-1)R_mG.
    \end{equation*}


    Now let $\chi_{a_{m-1}}:R_mG \to R_m G_{a_{m-1}}$ denote the natural map, and we will adopt bar notation to denote the image of an element of $R_mG$ in $R_{m-1}G$.  Then we have shown that if $a_0\neq -\infty$, then
    \begin{equation*}
        \overline{(\sigma-d)\chi_{a_{m-1}}(g_w)} \in \ann_{R_{m-1}G_{a_{m-1}}}
        \langle \{ p^{e_i}(\sigma^{p^{a_i}}-1)\}_{i\in S} \rangle
    \end{equation*}
    while if $a_0=-\infty$,
    \begin{equation*}
        \overline{(\sigma-d)\chi_{a_{m-1}}(g_w)} \in \ann_{R_{m-1}G_{a_{m-1}}}
        \langle p^{e_0}, \{
        p^{e_i}(\sigma^{p^{a_i}}-1)\}_{i\in S} \rangle.
    \end{equation*}

    Case 1: $a_0\neq -\infty$.  Set $t=s-1$ and index the elements
    of $S$ in an increasing manner, thereby defining a one-to-one,
    increasing map $v:\{0,\dots,t\} \to S$. Since $a_0\neq -\infty$,
    $0\in S$ and we have $v(0)=0$.  Set $c_j := a_{v(j)}$ for $j=0,
    \dots, t$, and set
    \begin{equation*}
        b_j := e_{v(j)}= (m-1) - v(j+1)\le m-2
    \end{equation*}
    for $j=0, \dots, t-1$ and $b_t=e(v(t))=0$.  Since by
    (\ref{it:exc.mod.indecom.condition...ai.vs.aj.inequality}), $a_j<a_i$ for $j<i$ elements of $S$,
    $(c_j)\subset \{0,\dots,a_{m-1}-1\}$ is an increasing set and
    $(b_j) \subset \{0,\dots,m-2\}$ is a decreasing set.  Together
    with (\ref{it:exc.mod.indecom.condition...d.in.U1}) and (\ref{it:exc.mod.indecom.condition...power.of.d}), we have satisfied
    conditions (\ref{it:t2}) and (\ref{it:t3}) of
    Lemma~\ref{le:tech}\index{Lemma \ref{le:tech}} for $i=a_{m-1}$.

    Case 2: $a_0= -\infty$. Set $t=s$ and index the elements of $S$
    in an increasing manner, defining a one-to-one, increasing map
    $v:\{1,\dots,t\} \to S$.  Further let $v(0)=0$.  Set $c_j :=
    a_{v(j)}$ for $j=0, \dots, t$, and set
    \begin{equation*}
        b_j := e_{v(j)}= (m-1) -
        v(j+1)\le m-2
    \end{equation*}
    for $j=0, \dots, t-1$ and $b_t=e(v(t))=0$.  Since by
    (\ref{it:exc.mod.indecom.condition...ai.vs.aj.inequality}), $a_j<a_i$ for $j<i$ elements of $S$, $(c_j)
    \subset \{-\infty, 0, \dots,a_{m-1}-1\}$ is an increasing set
    and $(b_j)\subset \{0,\dots,m-2\}$ is a decreasing set.
    Together with (\ref{it:exc.mod.indecom.condition...d.in.U1}) and (\ref{it:exc.mod.indecom.condition...power.of.d}), we have satisfied
    conditions (\ref{it:t2}) and (\ref{it:t3}) of
    Lemma~\ref{le:tech}\index{Lemma \ref{le:tech}} for $i=a_{m-1}$.

    Now in either case, we claim that condition (\ref{it:t4}) for
    $i=a_{m-1}$ holds, as follows.  By (\ref{it:exc.mod.indecom.condition...power.of.d}), there is an
    $R_m$-homomorphism $\phi_d^{(m)}$.  Let $v(l)$, $0\le v(l)<m-1$, be given, and note that if $l=0$, then we can assume $c_0 = a_{v(0)} \neq -\infty$ (since (\ref{it:t4}) imposes no condition in the case $c_0 = -\infty$).  By (\ref{it:exc.mod.indecom.condition...ai.vs.aj.inequality}),
    $$a_{m-1}-c_l=a_{m-1}-a_{v(l)}>m-1-v(l)$$ except if $p=2$, $d\not\in
    U_2$, and $l=a_l=0$.  Hence outside of this exceptional case, we
    may invoke Lemma~\ref{le:phi}\index{Lemma \ref{le:phi}}(\ref{it:phi2}) to see that
    $\phi_d^{(m)}(P(a_{m-1},c_l))\in p^{a_{m-1}-c_l} R_m = p^{a_{m-1}-a_{v(l)}}R_m \subseteq p^{m-v(l)}R_m$.  When $l=0$ then the definition of $v(0)$ gives $\phi_d^{(m)}(P(a_{m-1},c_0)) = 0$. If $l > 0$, then because $v(l) = m-1-b_{l-1}$ we have $\phi_d^{(m)}(P(a_{m-1},c_l)) \in p^{1+b_{l-1}}R_m$.  Therefore condition (\ref{it:t4}) follows outside of the exceptional case $p=2$, $d\not\in
    U_2$, and $l=a_l=0$.

    Now suppose that $p=2$,
    $d\not\in U_2$, and $c_0=0$.  We have from (\ref{it:exc.mod.indecom.condition...ai.vs.aj.inequality}) that
    $a_{m-1}\ge 1$.  If $d=-1$, then again by
    Lemma~\ref{le:phi}\index{Lemma \ref{le:phi}}(\ref{it:phidm1}) we have
    $\phi_d^{(m)}(P(a_{m-1},0))=0$.  Otherwise, $d\in -U_v\setminus
    -U_{v+1}$ for some $v\ge 2$.  Then from
    Lemma~\ref{le:phi}\index{Lemma \ref{le:phi}}(\ref{it:phi3}) we obtain
    $\phi_d(P(a_{m-1},0))=2^{v+a_{m-1}-1} \mod 2^{v+a_{m-1}}$. We need
    that $v+a_{m-1}-1\ge m$.  If $v\ge m$, then since $a_{m-1}\ge 1$ we
    are again done. If $v<m$, then $m-1\ge v$ and from (\ref{it:exc.mod.indecom.condition...ai.vs.v.inequality}) we
    have $a_{m-1}> (m-1)-(v-1) = m-v$.  Hence $a_{m-1}\ge m-v+1$ and
    so $v+a_{m-1}-1\ge m$, as desired.  We have condition
    (\ref{it:t4}) for $i=a_{m-1}$ in all cases.

    In both cases above, then, we may invoke Lemma~\ref{le:tech}\index{Lemma \ref{le:tech}}
    with $i=a_{m-1}$ to assert that in $R_m G_{a_{m-1}}$
    \begin{equation*}
        (\sigma-d)\chi_{a_{m-1}}(g_w)\in \\
        \begin{cases}
            \langle P(a_{m-1},c_0), p^{m-1-b_0}P(a_{m-1},
            c_1),\cdots,& \\ \quad p^{m-1-b_{t-1}}P(a_{m-1},c_t),
            p^{m-1}(\sigma-d)\rangle, & a_0\neq -\infty \\ \langle
            p^{m-1-b_0}P(a_{m-1},c_1),\cdots,& \\ \quad
            p^{m-1-b_{t-1}}P(a_{m-1},c_t), p^{m-1}(\sigma-d)\rangle,
            & a_0 = -\infty.
        \end{cases}
    \end{equation*}

    Now $c_i\le c_t$ for all $0\le i\le t$ and $c_t\ge 0$.  Hence
    in $R_mG_{a_{m-1}}$ we have for all $0\le i\le t$
    \begin{align*}
        (\sigma^{p^{c_t}}-1)P(a_{m-1},c_i) &=
        (\sigma-1)P(c_t,0)P(a_{m-1},c_i) \\
        &= (\sigma-1)P(c_t,c_i)P(c_i,0)P(a_{m-1},c_i) \\
        &= P(c_t,c_i)(\sigma-1)P(a_{m-1},0) \\
        &= 0.
    \end{align*}
    Hence
    \begin{equation*}
        (\sigma^{p^{c_t}}-1)(\sigma-d)\chi_{a_{m-1}}(g_w)\in
        \langle p^{m-1}(\sigma-d)(\sigma^{p^{c_t}}-1)\rangle
        \subset R_mG_{a_{m-1}}.
    \end{equation*}

    Observe that the $c$ of the statement of the Lemma is $c=c_t$.
    Recalling that in $R_mG_{a_{m-1}}/pR_mG_{a_{m-1}}$ we have
    $(\sigma^{p^{c_t}}-1) = (\sigma-1)^{p^{c_t}}$,
    \begin{equation*}
        (\sigma^{p^{c}}-1)(\sigma-d)\chi_{a_{m-1}}(g_w) \in
        \langle p^{m-1}(\sigma-d)(\sigma-1)^{p^{c}}\rangle
        \subset R_mG_{a_{m-1}}.
    \end{equation*}
    Lifting to $R_mG$ gives the desired result.
\end{proof}

We are now prepared to prove the main result of this paper.  

\begin{proof}[Proof of Theorem \ref{th:indecomp}]
    We prove the proposition by induction on $m$, and we start with the case $m=1$.  
    
    Since $\Fp G$ is
    a local ring, cyclic $\Fp G$-modules are indecomposable.  Since
    $d\in U_1$ by (\ref{it:exc.mod.indecom.condition...d.in.U1}), and since the action of $d$ on $X$ is only determined by the congruence class of $d$ modulo $p$ (because we are in the case $m=1$), we see that the relation $(\sigma-d)y=\sum p^i x_i$ reduces to the relation $(\sigma-1)y = x_0$.  But since the additional relation $\sigma^{p^{a_0}}x_0  = x_0$ gives us $(\sigma^{p^{a_0}}-1)x_0 = (\sigma-1)^{p^{a_0}}x_0  = 0$, these two relations together give us 
$$
        X_{\mathbf{a},d,1} :=\langle y,x_0 :
        (\sigma-d)y=x_0,\sigma^{p^{a_0}}x_0=x_0
        \rangle
        =\langle y : (\sigma-1)^{p^{a_0}+1}y=0\rangle.
$$
    Hence $X_{\mathbf{a},d,1}$ is indecomposable and the case
    $m=1$ holds.  Assume then that for some $l\ge 2$, the statement
    of the proposition holds for all $m\le l-1$; we show that the
    statement holds for $m=l$ as well.

    Suppose that $X_{\mathbf{a},d,m}\simeq V\oplus W$ with $V, W \neq
    \{0\}$. In view of the defining relations of $X_{\mathbf{a},d,m}$,
    we have
    \begin{equation*}
        X/p^{m-1}X = A\oplus B,
    \end{equation*}
    where
    \begin{equation*}
        A := X_{\check{\mathbf{a}},d,m-1}
    \end{equation*}
    with $\check{\mathbf{a}} = (a_0,\dots,a_{m-2})$ and
    \begin{equation*}
        B := \langle x_{m-1}\rangle/(p^{m-1}x_{m-1},
        (\sigma^{p^{a_{m-1}}}-1)x_{m-1})\simeq R_{m-1}G_{a_{m-1}}.
    \end{equation*}
    Since conditions (\ref{it:exc.mod.indecom.condition...d.in.U1})--(\ref{it:exc.mod.indecom.condition...ai.vs.v.inequality}) hold for $m$, they
    hold as well for $m-1$ and so by induction, $X_{\check{\mathbf{a}},d,m-1}$
    is an indecomposable $R_{m-1} G$-module.

    Observe that $a_{m-1}=-\infty$ if and only if $B=\{0\}$.  In this
    case, $X/p^{m-1}X$ is indecomposable and we deduce that one of
    $V/p^{m-1}V$ and $W/p^{m-1}W$ is zero, say $V/p^{m-1}V$. But then
    by Nakayama's Lemma, $V=\{0\}$, a contradiction.  Hence if
    $a_{m-1}=-\infty$, then we are done.  We assume then that
    $a_{m-1}\neq -\infty$.  By (\ref{it:exc.mod.indecom.condition...ai.vs.aj.inequality}), $a_0<a_{m-1}$ and so
    $l(y)=p^{a_0}+1\le p^{a_{m-1}}=l(x_{m-1})$.

    Suppose that $p^{a_0}+1=p^{a_{m-1}}$.  Then either $a_0=-\infty$
    and $a_{m-1}=0$, or $p=2$, $a_0=0$, and $a_{m-1}=1$.  In the
    latter case, by (\ref{it:exc.mod.indecom.condition...ai.vs.aj.inequality}) we have a contradiction with
    $a_{m-1}-a_0>m-1$ except in the case $d\not\in U_2$.  Hence
    we are in one of the cases of Lemma~\ref{le:casea}\index{Lemma \ref{le:casea}}, and we
    are done.  We may assume then that $p^{a_0}+1<p^{a_{m-1}}$.

    Since we are in the case $a_{m-1} \neq -\infty$, we know that $B$ is nonzero. As
    a cyclic module over the local ring $R_mG$, it is therefore
    indecomposable. By the Krull-Schmidt Theorem all decompositions of $X/p^{m-1}X$
    into indecomposable $R_{m-1}G$-modules are equivalent. Hence
    $V/p^{m-1}V\oplus W/p^{m-1}W$ decomposes into precisely two
    indecomposable $R_{m-1}G$-modules.  We may assume
    without loss of generality that
    \begin{align*}
        V/p^{m-1}V &\simeq X_{\check a,d,m-1} \\
        W/p^{m-1}W &\simeq B \simeq R_{m-1}G_{a_{m-1}}.
    \end{align*}
    By Lemma~\ref{le:cycprop}\index{Lemma \ref{le:cycprop}}, $W$ is cyclic and we let $w\in W$
    denote a generator.  Since $W/pW\simeq \Fp G_{a_{m-1}}$ and $W$
    is a direct summand of $X$, $l(w)=p^{a_{m-1}}$.  Observe that
    $l(y)=p^{a_0}+1<l(w)$; combined with the previous expression for $l(w)$, we have $l(w) \geq 2$.  Recall also that we are not in the case $p=2$, $a_0=0$ and $a_{m-1}=1$.  


    Our plan is to show that $\langle p^{m-1}x_{m-1}\rangle^\star$ is
    nonzero and lies in each of $V^\star$ and $W^\star$, contradicting
    the independence of $V$ and $W$.

    By Lemma~\ref{le:indecomp2}\index{Lemma \ref{le:indecomp2}}(\ref{it:j4}),
    \begin{equation*}
        \langle p^{m-1}(\sigma-1)^{l(w)-2}w\rangle = \langle
        p^{m-1}(\sigma-1)^{l(w)-2}x_{m-1}\rangle \neq \{0\}.
    \end{equation*}
    Therefore $\langle p^{m-1}(\sigma-1)^{l(w)-2}w\rangle^\star = \langle
        p^{m-1}(\sigma-1)^{l(w)-2}x_{m-1}\rangle^\star$ is a nontrivial subspace of $W^\star.$

    Now $\langle p^{m-1}x_{m-1}\rangle$ is an $\Fp G$-module, and
    the submodules are precisely the images under multiplication by
    powers of the maximal ideal $(\sigma-1)$.  Since $\langle
    p^{m-1}x_{m-1}\rangle^\star$ is a trivial $\Fp G$-module, it
    must be
    \begin{equation*}
        \langle p^{m-1}(\sigma-1)^tx_{m-1}\rangle
    \end{equation*}
     for some $t\in \N$ such that $p^{m-1}(\sigma-1)^{t+1}
    x_{m-1}=0$. Hence the modules $\langle p^{m-1} (\sigma-1)^s
    x_{m-1}\rangle^\star$ are identical for all $0\le s\le t$.  From
    the last paragraph, $t\ge l(w)-2$.

    Putting our results together, we have obtained
    \begin{equation*}
      \{0\} \neq \langle
        p^{m-1}(\sigma-1)^{l(w)-2}x_{m-1}\rangle^\star = \langle p^{m-1}x_{m-1}\rangle^\star \subset
      W^\star.
    \end{equation*}

    Now we approach $V^\star$.  Write $\tilde y=\proj_V y$ and $\check
    y = \proj_W y$.  Then $\check y = g_ww$ for some $g_w\in R_mG$,
    and $\tilde y = y-g_ww$.  Let $\chi_{a_{m-1}}:R_mG \to
    R_mG_{a_{m-1}}$ denote the natural map. Further, we use a bar to denote the image of an
    element in $R_m G_{a_{m-1}}$ under the natural homomorphism $R_m
    G_{a_{m-1}}\to R_{m-1}G_{a_{m-1}}$ modulo $p^{m-1}$.

    We first treat the special case when all $a_i=-\infty$, $0\le i<m-1$,
    and then consider the more generic case.

    Case B1: All $a_i=-\infty$, $0\le i<m-1$.

    Since $(\sigma-d)y =p^{m-1}x_{m-1}$, we obtain by projection that
    \begin{equation*}
        (\sigma-d)g_ww=0 \mod p^{m-1}W.
    \end{equation*}
    Since $W/p^{m-1}W$ is a free $R_{m-1}G_{a_{m-1}}$-module, we
    deduce in $R_{m-1}G_{a_{m-1}}$,
    \begin{equation*}
        \overline{(\sigma-d)\chi_{a_{m-1}}(g_w)} = 0.
    \end{equation*}
    Since $a_{m-1}\neq -\infty$, by (\ref{it:exc.mod.indecom.condition...power.of.d}),
    $d^{p^{a_{m-1}}}\in U_m$.  By Lemma~\ref{le:phidb}\index{Lemma \ref{le:phidb}},
    \begin{equation*}
        \overline{\chi_{a_{m-1}}(g_w)} \in \langle Q_d(a_{m-1},0) \rangle \subset
        R_{m-1}G_{a_{m-1}}.
    \end{equation*}
    Hence
    \begin{equation*}
        \chi_{a_{m-1}}(g_w) = rQ_d(a_{m-1},0)+p^{m-1}s, \quad r,s \in R_mG_{a_{m-1}}.
    \end{equation*}
    Now since $d^{p^{a_{m-1}}}\in U_m$, in $R_{m}G_{a_{m-1}}$
    \begin{equation*}
        (\sigma-d)Q_d(a_{m-1},0) = (\sigma^{p^{a_{m-1}}}-d^{p^{a_{m-1}}})
        =(\sigma^{p^{a_{m-1}}}-1)=0.
    \end{equation*}
    Since $d\in U_1$ by (\ref{it:exc.mod.indecom.condition...d.in.U1}), $p^{m-1}(\sigma-d) =
    p^{m-1}(\sigma-1)$ in $R_mG_{a_{m-1}}$. Therefore
    \begin{equation*}
        (\sigma-d)\chi_{a_{m-1}}(g_w) \in \langle p^{m-1}(\sigma-1)\rangle \subset
        R_{m}G_{a_{m-1}}.
    \end{equation*}
    Hence
    \begin{equation*}
        (\sigma-d)g_w \in \langle
        p^{m-1}(\sigma-1),(\sigma^{p^{a_{m-1}}}-1) \rangle.
    \end{equation*}
    Then by Lemma~\ref{le:indecomp2}\index{Lemma \ref{le:indecomp2}}(\ref{it:j2},\ref{it:j3}), the
    fact that $x_i=0$, $0\le i<m-1$, and $p^{a_{m-1}}\ge 2$, we
    have
    \begin{equation*}
        (\sigma-d)g_ww\in p^{m-1} \langle (\sigma-1)x_{m-1}\rangle.
    \end{equation*}

    Therefore
    \begin{equation*}
        (\sigma-d)\tilde y = (\sigma-d)(y-g_ww)\in
        p^{m-1}x_{m-1} + p^{m-1}\langle
        (\sigma-1)x_{m-1}\rangle.
    \end{equation*}
    Then
    \begin{equation*}
        (\sigma-1)^{l(w)-2}(\sigma-d)\tilde y \in 
        p^{m-1}(\sigma-1)^{l(w)-2}x_{m-1}
        + \langle p^{m-1}(\sigma-1)^{l(w)-1}x_{m-1}
        \rangle.
    \end{equation*}

    Hence, since we have already shown that
    $p^{m-1}(\sigma-1)^{l(w)-2}x_{m-1}\neq 0$, we reach
    \begin{equation*}
        \{0\} \neq \langle p^{m-1}x_{m-1}\rangle^\star =
        \langle p^{m-1}(\sigma-1)^{l(w)-2}x_{m-1}\rangle^\star
        \subset \langle \tilde y \rangle^\star \subset V^\star.
    \end{equation*}

    Case B2: Some $a_i\neq -\infty$, $0\le i<m-1$.

    Let $c=\max\{a_i : 0\le i<m-1\}$. By Lemma~\ref{le:indecomp3}\index{Lemma \ref{le:indecomp3}} and recalling $(\sigma^{p^{c}}-1) = (\sigma-1)^{p^c} \in \F_pG$, we have
    \begin{equation*}
        (\sigma^{p^{c}}-1)(\sigma-d)g_w
        \in \langle (\sigma^{p^c}-1)p^{m-1}(\sigma-d),
        (\sigma^{p^{a_{m-1}}}-1)\rangle.
    \end{equation*}
    We will apply Lemma~\ref{le:indecomp2}\index{Lemma \ref{le:indecomp2}}(\ref{it:j2},\ref{it:j3}).
    Observe that from the defining relations of $X$ and, by
    (\ref{it:exc.mod.indecom.condition...ai.vs.aj.inequality}), the fact that $a_i<a_j$ for all $0\le i<j<m$ with
    $a_i, a_j\neq -\infty$, we have $(\sigma^{p^c}-1)x_i=0$ for
    $0\le i<m-1$. Moreover, $p^{m-1}(\sigma-d)=p^{m-1}(\sigma-1)$ in
    $R_mG$ by (\ref{it:exc.mod.indecom.condition...d.in.U1}), and $p^{m-1}(\sigma^{p^c}-1)=p^{m-1}(\sigma-1)^{p^c}$ in
    $R_mG$. Hence
    \begin{align*}
        (\sigma^{p^{c}}-1)(\sigma-d)g_ww\in &\ \langle
        p^{m-1}(\sigma-1)^{p^c+1} x_{m-1} \rangle + \\ &\
        p^{m-1}\langle
        x_0,\dots,x_{m-2},(\sigma-1)^{p^{a_{m-1}}-1}x_{m-1} \rangle.
    \end{align*}
    Therefore, again using that $(\sigma^{p^c}-1)=(\sigma-1)^{p^c}
    \mod pR_mG$,
    \begin{align}
        (\sigma^{p^{c}}-1)(\sigma-d)\tilde y =\
        &(\sigma^{p^{c}}-1)(\sigma-d)(y-g_ww) \nonumber \\ =\
        &(\sigma^{p^c}-1)\left(\sum_{l=0}^{m-1}p^ix_i\right)-
        (\sigma^{p^c}-1)(\sigma-d)g_ww \nonumber \\ \in\
        &p^{m-1}(\sigma-1)^{p^c}x_{m-1} +  \langle
        p^{m-1}(\sigma-1)^{p^c+1}x_{m-1}\rangle + \nonumber \nonumber\\
        &p^{m-1}\langle
        x_0,\dots,x_{m-2},(\sigma-1)^{p^{a_{m-1}}-1}x_{m-1}\rangle.
        \label{eq:endindec}
    \end{align}

    We intend to apply $(\sigma-1)^{l(w)-2-p^c}$ to \eqref{eq:endindec}.  A proof that $l(w)-2-p^c \geq 0$ will follow from the arguments we give.

    Note that $c=a_i$ for some $i\le m-2$, and $p^c=l(x_i)$.
    We claim that for all $0\le j<m-1$, $l(x_j)\le l(w)-2-p^c$.
	Substituting $l(w)=p^{a_{m-1}}$
    and $c=a_i$, we require that
    \begin{equation*}
        p^{a_j}+p^{a_i}\le p^{a_{m-1}}-2.
    \end{equation*}
    Since we are not in the case $p=2, d\not\in U_2$ and $i=0=a_0$, condition (\ref{it:exc.mod.indecom.condition...ai.vs.aj.inequality}) gives $a_{m-1}-a_i>(m-1)-i\ge 1$. If $p>2$, then
    \begin{equation*}
        p^{a_j}+p^{a_i}\le 2p^{a_i} < p^{a_i+1} <p^{a_i+2}\le
        p^{a_{m-1}}
    \end{equation*}
    and so
    \begin{equation*}
        p^{a_j}+p^{a_i}\le p^{a_{m-1}}-2.
    \end{equation*}
    If, on the other hand, $p=2$, then
    \begin{equation*}
        2^{a_j}+2^{a_i}\le 2\cdot 2^{a_i} = 2^{a_i+1}< 2^{a_i+2}\le
        2^{a_{m-1}}.
    \end{equation*}
    It remains to show only that $2^{a_i+1}+1\neq 2^{a_i+2}$.  But
    considering the parity of each side, we are done. Hence our
    claim that for $0\le j<m-1$, $l(x_j)\le l(w)-2-p^c$, holds.

   We claim that $p^{a_{m-1}}-1>p^c$, for which the previously established inequality $a_{m-1} \geq a_i + 2$ will be useful.  When $p>2$ or $c>0$ we have 
    $p^{a_{m-1}}>p^{a_i+1}=p^{c+1}>p^c+1$; on the other hand, when $p=2$ and $c=0$ one has $2^{a_{m-1}} - 1 \geq 4-1 > 2^0$.  Hence
    $p^{a_{m-1}}-1>p^c$ and then
    \begin{equation*}
        p^{a_{m-1}}-1+l(w)-2-p^c \ge l(w)-1.
    \end{equation*}

    Applying $(\sigma-1)^{l(w)-2-p^c}$ to \eqref{eq:endindec}, then,
    we obtain
    \begin{equation*}
        (\sigma-1)^{l(w)-2-p^c}(\sigma^{p^c}-1)(\sigma-d)\tilde y 
        \in p^{m-1} (\sigma-1)^{l(w)-2}x_{m-1} + \langle
        p^{m-1}(\sigma-1)^{l(w)-1} x_{m-1}\rangle \neq \{0\}.
    \end{equation*}
    Therefore we reach
    \begin{equation*}
        \{0\} \neq \langle p^{m-1}x_{m-1}\rangle^\star = \langle
        p^{m-1}(\sigma-1)^{p^{l(w)-2}}x_{m-1}\rangle^\star \subset
        \langle \tilde y \rangle^\star \subset V^\star.
    \end{equation*}

    In either case, then,
    \begin{equation*}
        V^\star \cap W^\star \neq \{0\},
    \end{equation*}
    a contradiction, and we are done.
\end{proof}

\section{A uniqueness result}\label{sec:uniqueness}
With the indecomposability of $X_{\mathbf{a},d,m}$ established under the hypotheses of Theorem \ref{th:indecomp}, it is natural to ask whether there exist other $\tilde{\mathbf{a}}$ and $\tilde d$ such that $X_{\mathbf{a},d,m} \simeq X_{\tilde{\mathbf{a}},\tilde d,m}$.

\begin{proposition}\label{pr:geta}
Suppose $m, \mathbf{a}$ and $d$ satisfy conditions (\ref{it:exc.mod.indecom.condition...d.in.U1})--(\ref{it:exc.mod.indecom.condition...ai.vs.v.inequality}) of Theorem \ref{th:indecomp}.  Then $\mathbf{a}$ is determined by module theoretic invariants of $X_{\mathbf{a},d,m}$.  In particular, if $\mathbf{a} \neq \tilde{\mathbf{a}}$ and $m,\tilde{\mathbf{a}}$ and $\tilde d$ satisfy conditions (\ref{it:exc.mod.indecom.condition...d.in.U1})--(\ref{it:exc.mod.indecom.condition...ai.vs.v.inequality}), then $X_{\mathbf{a},d,m} \not\simeq X_{\tilde{\mathbf{a}},\tilde d,m}$.
\end{proposition}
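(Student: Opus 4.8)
The plan is to recover each entry $a_i$ of $\mathbf{a}$ from intrinsic module-theoretic data attached to $X:=X_{\mathbf{a},d,m}$, so that two modules with different vectors cannot be isomorphic. The natural handle is the invariant $l(\,\cdot\,)$ on elements together with the filtration by powers of $p$. First I would observe that $X/pX \simeq X_{\mathbf{a},\bar d,1}$ is an $\F_pG$-module whose structure we already understand from the $m=1$ base case of Theorem~\ref{th:indecomp}; more generally $p^kX/p^{k+1}X$ is an $\F_pG$-module, and I would compute its decomposition into cyclic $\F_pG$-modules. Using the defining relations \eqref{eq:definition.of.X} and the convention $\langle x_i\rangle \simeq R_mG_{a_i}$, one sees that the image of $x_i$ in $p^kX/p^{k+1}X$ generates a cyclic summand of length $p^{a_i}$ for each $i$ with a certain constraint on $k$, while the image of $y$ contributes a summand of length $p^{a_0}+1$ in the bottom layer $k=0$ (this is exactly the ``$p$-adic expansion'' bookkeeping the authors set up, and matches the shape in Figure~\ref{fig:relations.depiction}). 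The point is that the multiset of lengths appearing across all the layers $p^kX/p^{k+1}X$ is a module-theoretic invariant (by Krull–Schmidt applied to each $\F_pG$-module $p^kX/p^{k+1}X$), and from this multiset one can read off $\{a_0,\dots,a_{m-1}\}$ as a multiset — the exponents $a_i$ with $a_i\neq-\infty$ are recovered as $\log_p$ of the cyclic lengths, and $a_0$ is pinned down separately because $y$ contributes the anomalous length $p^{a_0}+1$.

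The subtlety is that recovering $\mathbf{a}$ as a \emph{multiset} is not quite enough: we need the ordered vector, i.e.\ we need to know which exponent sits in which coordinate. Here conditions \eqref{it:exc.mod.indecom.condition...ai.vs.aj.inequality} and the ``weaker form'' noted right after the statement of Theorem~\ref{th:indecomp} come to the rescue: for $0\le i<j<m$ with $a_i,a_j\neq-\infty$ one has $a_i<a_j$, so the nonzero entries are automatically listed in strictly increasing order, and the entries equal to $-\infty$ are exactly the missing slots. Thus the ordered vector $\mathbf{a}$ is \emph{determined by} the multiset $\{a_i : a_i \neq -\infty\}$ together with $m$: write the distinct finite values in increasing order to fill the last coordinates appropriately and pad with $-\infty$'s. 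So the proof reduces to: (1) show the multiset of finite values $\{a_i\}$ is a module invariant, and (2) invoke the monotonicity forced by the hypotheses to pass from the multiset to the ordered vector. Once $\mathbf{a}$ is an invariant, the final statement is immediate: if $\mathbf{a}\neq\tilde{\mathbf{a}}$ then $X_{\mathbf{a},d,m}$ and $X_{\tilde{\mathbf{a}},\tilde d,m}$ have different values of this invariant, hence are not isomorphic.

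The step I expect to be the main obstacle is (1), and specifically nailing down the layer-by-layer $\F_pG$-module structure of $p^kX/p^{k+1}X$ precisely enough to separate the contribution of $\langle y\rangle$ from the contributions of the $\langle x_i\rangle$ without overcounting or undercounting. The relation $(\sigma-d)y = \sum_{i=0}^{m-1}p^i x_i$ mixes $y$ with all the $x_i$'s, so in the quotient $p^kX/p^{k+1}X$ one must be careful about which relations survive: for instance in layer $k$ the relation reads $(\sigma-1)\bar y \equiv \bar x_k$ (using $d\in U_1$), so $\bar y$ and $\bar x_k$ do not generate independent summands there, whereas for $k'\neq k$ they do. I would handle this by using the chosen hypotheses to check that, after the dust settles, the only length that can appear as ``$p^{a}+1$'' for some $a$ (i.e.\ not a pure power of $p$) is $p^{a_0}+1$ in layer $0$, so $a_0$ is unambiguously recovered, and then the remaining finite $a_i$'s are recovered from the pure-power-of-$p$ cyclic lengths appearing as $i$ ranges over $1,\dots,m-1$. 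Condition \eqref{it:exc.mod.indecom.condition...a0.bounds} (forcing $a_0=-\infty$ when $n=1$, $p=2$) and the $p=2$ exceptional clause in \eqref{it:exc.mod.indecom.condition...ai.vs.aj.inequality} will each need a small separate check to make sure no accidental coincidences of lengths occur, but these are finite case analyses rather than a conceptual difficulty.
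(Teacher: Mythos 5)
Your plan has a genuine gap: the graded pieces $p^kX/p^{k+1}X$ do \emph{not} determine $\mathbf{a}$. The index $i$ at which a finite value $a_i$ sits is recorded in $X$ only through the depth $p^i$ at which $x_i$ enters the relation $(\sigma-d)y=\sum_j p^jx_j$, and that is extension data \emph{between} the layers, which the associated graded module forgets. Concretely, take $p$ odd, $m=3$, and compare $\mathbf{a}=(-\infty,2,-\infty)$ with $\tilde{\mathbf{a}}=(-\infty,-\infty,2)$; both satisfy the hypotheses of Theorem \ref{th:indecomp} for suitable $d$. In every layer $k=0,1,2$ of either module the relation coming from $y$ dies: for the first, $(\sigma-d)p^ky=p^{k+1}x_1\equiv 0$, and for the second, $(\sigma-d)p^ky=p^{k+2}x_2\equiv 0$ modulo $p^{k+1}X$. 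Hence all three layers of both modules are isomorphic to $\Fp\oplus\Fp G_2$, yet the modules are not isomorphic (for the first, $X/p^2X\simeq X_{(-\infty,2),d,2}$ is indecomposable, while for the second $X/p^2X\simeq R_2\oplus R_2G_2$ splits into two indecomposables). Note also that your intermediate claim that in layer $k$ the relation reads $(\sigma-1)\bar y\equiv\bar x_k$ is incorrect: reducing $(\sigma-d)p^ky=\sum_jp^{k+j}x_j$ modulo $p^{k+1}X$ always leaves $\bar x_0$, never $\bar x_k$; this mistaken picture is what makes it look as though the layers could see the positions of the $a_i$.

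The second half of your plan fails for the same reason: monotonicity from condition (\ref{it:exc.mod.indecom.condition...ai.vs.aj.inequality}) orders the finite entries of $\mathbf{a}$ but says nothing about where the $-\infty$'s are interleaved, and the example above is precisely two admissible vectors with the same multiset of finite entries. So even a correct proof of your step (1) would not yield step (2). The paper's proof avoids all of this by inducting on $m$ using the quotient $X/p^{m-1}X\simeq X_{\check{\mathbf{a}},d,m-1}\oplus R_{m-1}G_{a_{m-1}}$, which keeps the relation $(\sigma-d)y=\sum_{i\le m-2}p^ix_i$ intact; indecomposability of both summands (Theorem \ref{th:indecomp} for the first, cyclicity over a local ring for the second) together with Krull--Schmidt lets one peel off $a_{m-1}$ and recurse, with a short case analysis (minimal number of generators, then $\Fp$-dimensions mod $p$) to tell the two summands apart. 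If you want to salvage your approach, you must replace the graded pieces by invariants that remember the cross-layer relation, e.g.\ the quotients $X/p^kX$ themselves.
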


\begin{proof}
    The second statement follows easily from first, so we focus on showing that we can recover $\mathbf{a}$ from module-theoretic invariants of $X_{\mathbf{a},d,m}$. We work by induction on $m$.

    First we consider $m=1$.  Since $d\in U_1$ by (\ref{it:exc.mod.indecom.condition...d.in.U1}),
    and taking into account $a_0<n$,
    as an $\Fp G$-module we have
    \begin{equation*}
        X_{\mathbf{a},d,1} :=\left\langle y,x_0 :
        (\sigma-d)y=x_0,\sigma^{p^{a_0}}x_0=x_0 \right\rangle 
        =\left\langle y : (\sigma-1)^{p^{a_0}+1}y=0\right\rangle.
    \end{equation*}
	Then $X$ is a cyclic $\Fp G$-module of dimension $p^{a_0}+1$.
    Now $p^{a_0}+1$, for $a_0\in \{-\infty,0,1,\dots,n-1\}$, uniquely
    determines $a_0$, so we have established our claim for $m=1$.

    Now suppose that for some $m>1$, $X_{\check{\mathbf{a}},d,m-1}$ uniquely
    determines $\check{\mathbf{a}}=(a_0,\cdots,a_{m-2})$.  We show that $X_{\mathbf{a},d,m}$
    uniquely determines $\mathbf{a}$ as well.  Our goal is to
    distinguish $\mathbf{a}$ from abstract properties of $X_{\mathbf{a},d,m}$:
    decompositions into indecomposables; numbers of generators of
    submodules of quotient modules; and dimensions of quotient
    modules.

    In view of the defining relations of $X_{\mathbf{a},d,m}$, we have
    \begin{equation*}
        X_{\mathbf{a},d,m}/p^{m-1}X_{\mathbf{a},d,m} = A\oplus B,
    \end{equation*}
    where
    \begin{equation*}
        A:= X_{\check{\mathbf{a}},d,m-1}
    \end{equation*}
    for $\check{\mathbf{a}} = (a_0,\dots,a_{m-2})$ and
    \begin{equation*}
        B := \langle x_{m-1}\rangle/ (p^{m-1}x_{m-1},
        (\sigma^{p^{a_{m-1}}}-1)x_{m-1}) \simeq R_{m-1}G_{a_{m-1}}.
    \end{equation*}
    (Observe that $a_{m-1}=-\infty$ if and only if $B=\{0\}$.) Since
    conditions  (\ref{it:exc.mod.indecom.condition...d.in.U1})--(\ref{it:exc.mod.indecom.condition...ai.vs.v.inequality}) hold for $m$, they hold
    as well for $m-1$ and so by Theorem \ref{th:indecomp}\index{Theorem \ref{th:indecomp}},
    $X_{\check{\mathbf{a}}, d, m-1}$ is an indecomposable $R_{m-1} G$-module.
    Moreover, $B$ is a free cyclic $R_{m-1}G_{a_{m-1}}$-module
    which, as a cyclic module over the local ring $R_{m-1}G$, is
    indecomposable as well.  By the Krull-Schmidt Theorem, this decomposition into
    indecomposables is unique.

    Observe that by induction $A$ determines $\check{\mathbf{a}} =
    (a_0,\dots,a_{m-2})$ uniquely, and moreover $B$ determines
    $a_{m-1}$ uniquely as the unique element $w$ of
    $\{-\infty,0,1,\dots,n\}$ such that $p^w=\dim_{\Fp} B/pB$.  Our
    strategy is then to decompose $X/p^{m-1}X$ into indecomposables
    and then to determine $\check{\mathbf{a}}$ and $a_{m-1}$ by determining
    which of $V$ and $W$ is $A$ and $B$.

    Therefore let $X/p^{m-1}X=V\oplus W$ be an arbitrary
    decomposition of the $R_{m-1}G$-module into indecomposables.  As
    a first step, we consider the number of generators.  For
    finitely generated $R_mG$-modules $M$, let $g(M)$ denote the
    minimal number of generators of $M$, where we consider the zero
    module $\{0\}$ to have $0$ generators.  Clearly $g(B)=1$ if
    $B\neq \{0\}$ and $g(B)=0$ otherwise.  Now let
    $S=\{a_i\}_{i=1}^{m-2} \setminus \{-\infty\}$.  Considering the
    submodules generated by the elements $y, x_1, \dots, x_{m-2}$,
    we see that $X/pX$ contains $1+\vert S\vert$ direct nontrivial
    summands.  Hence $g(A)\ge 1+\vert S\vert$.

    Now if $g(V)\neq g(W)$, then we have sufficient information to
    identify $V$ and $W$ with the appropriate choices of $A$ and
    $B$, and hence to determine $\mathbf{a}$.  Otherwise $g(V)=g(W)$ and we must have
    $g(V)=g(W)=g(A)=g(B)=1$.  We turn next to the $\Fp$-dimension of
    the modules modulo $p$.  If $g(A)=1$, then $\vert S\vert = 0$ and
    $a_{i}=-\infty$, $1\le i\le m-2$. From the defining relations of
    $A$ we see that as an $\Fp G$-module, $A/pA\simeq \langle y\ :\
    (\sigma-1)^{p^{a_0}+1} y = 0\rangle$. On the other hand,
    $B/pB\simeq \Fp G_{a_{m-1}}$. Then since $a_0<a_{m-1}$ by
    (\ref{it:exc.mod.indecom.condition...ai.vs.aj.inequality}), we have
    \begin{equation*}
        \dim_{\Fp} A/pA=p^{a_0}+1 < p^{a_{m-1}}=\dim_{\Fp} B/pB
    \end{equation*}
    unless either $p=2$, $a_0=0$, and $a_{m-1}=1$, or $a_0=-\infty$, and
    $a_{m-1}=0$ (without condition on $p$).  Hence if we are not in one of these two cases, then again
    we have sufficient information to identify $V$ and $W$, and hence $\mathbf{a}$.

    If, however, we are in the case that $\dim_{\F_p}A/pA = \dim_{\F_p}B/pB$, then either this common quantity is $1$ or $2$.  In the former case we have already seen that $a_{m-1}=0$, and in the latter case we have $a_{m-1}=1$.  Since $\check{\mathbf{a}} = (a_0,\cdots,a_{m-2})$ is determined by induction, this shows that $\mathbf{a}$ is determined in these last cases as well.

\end{proof}

\section{Necessity of conditions}

Theorem \ref{th:indecomp} tells us that if certain conditions on $d$ and $\mathbf{a}$ are met, they are sufficient to determine the indecomposability of $X_{\mathbf{a},d,m}$.  It is natural to ask whether these conditions are also necessary.  Since there are a number of conditions in the theorem, we will only discuss this topic in regards to two of the conditions.

First we consider the necessity of (\ref{it:exc.mod.indecom.condition...a0.bounds}), at least in one particular situation.  Returning to case (\ref{it:casea2}) of Lemma \ref{le:casea}\index{Lemma \ref{le:casea}}, suppose that $p=2$, $d \not\in U_2$, $a_0 = 0$ and $a_{m-1}=1$.  We remarked in the proof of this result that these conditions imply $n>1$ in light of condition (\ref{it:exc.mod.indecom.condition...a0.bounds}).  In fact, if we did not have $n>1$ in this case, then the module would fail to be indecomposable.  To see this, suppose that $p=2$, $n=1$, $d \not\in U_2$, $a_0 = 0$ and $a_{m-1}=1$. Using our calculation of $(\sigma+1)y$ from the  analysis of this case (see equation (\ref{eq:relations.in.case.2})), one then has
$$0= (\sigma^2-1)y= (\sigma-1)(\sigma+1)y= (\sigma-1)(x_0+2^{m-1}x_{m-1}) = 2^{m-1}(\sigma+1)x_{m-1}.$$  One can then prove that
$$\langle y \rangle_{R_mG_1} = \langle y,x_0+2^{m-1}x_{m-1} \rangle_{R_m}$$ is a direct complement to $\langle x_{m-1} \rangle_{R_mG_1}$.

Now we consider the necessity of condition (\ref{it:exc.mod.indecom.condition...ai.vs.aj.inequality}), again in a particular case.  Let $\mathbf{a}=(a_0,a_1,\cdots,a_{m-1})$ and $d$ be given so that there exists some $0 \leq i < m-1$ so that $a_i+((m-1)-i) \geq a_{m-1}$.  Assume furthermore that either $p>2$, or $d \in U_2$, or $i>0$.  We will prove that $X_{\mathbf{a},d,m}$ is decomposable in this case.  Recall that $$X_{\mathbf{a},d,m} = \left\langle y,x_0,x_1,\cdots,x_{m-1}: (\sigma-d)y = \sum_{j=0}^{m-1} p^jx_j; \sigma^{p^{a_j}}x_j = x_j, 0 \leq j < m\right\rangle.$$

By Lemma \ref{le:phi}(\ref{it:phi2}) we know that $\phi_d(P(a_{m-1},a_i)) = p^{a_{m-1}-a_i} \pmod{p^{a_{m-1}-a_i+1}}$.  Since $(m-1)-i \geq a_{m-1}-a_i$, we have $p^{m-1-(a_{m-1}-a_i)} \in \mathbb{Z}$, and so multiplying through in the previous congruence gives $$\phi_d(p^{m-1-(a_{m-1}-a_i)}P(a_{m-1},a_i)) = p^{m-1} \pmod{p^m}.$$  Hence there is some $Q \in R_mG$ so that in $R_mG$ we have $$(\sigma-d)Q = p^{m-1-(a_{m-1}-a_i)}P(a_{m-1},a_i)-p^{m-1}.$$
Now consider the elements $\hat y, \hat x_0,\hat x_1,\cdots, \hat x_{m-1}$ given by
$$\hat y = y - Qx_{m-1}, \quad \hat x_i = x_i + p^{m-1-i-(a_{m-1}-a_i)}P(a_{m-1},a_i)x_{m-1}, \quad \hat x_{m-1} = 0,$$ and with all other $\hat x_j = x_j$.  One can then compute that 
\begin{align*}
(\sigma-d)\hat y &= (\sigma-d)y -(\sigma-d)Qx_{m-1} \\
&= \sum_{j=0}^{m-1} p^j x_j - p^{m-1-(a_{m-1}-a_i)}P(a_{m-1},a_i)x_{m-1}-p^{m-1}x_{m-1} \\
&= \left(\sum_{j\neq i,m-1} p^j x_j\right) + p^i\left(x_i- p^{m-1-i-(a_{m-1}-a_i)}P(a_{m-1},a_i)x_{m-1}\right)+p^{m-1}(x_{m-1}-x_{m-1}) \\
&= \sum_{j=0}^{m-1} p^{j}\hat x_j.
\end{align*}  
Furthermore we have $\sigma^{p^{-\infty}}\hat x_{m-1}=\hat x_{m-1}$ (since $\hat x_{m-1}=0$), and also 
\begin{align*}
(\sigma^{p^{a_i}}-1)\hat x_i &= (\sigma^{p^{a_i}}-1)x_i + (\sigma^{p^{a_i}}-1)p^{m-1-i-(a_{m-1}-a_i)}P(a_{m-1},a_i)x_{m-1} \\&= (\sigma^{p^{a_i}}-1)x_i  + p^{m-1-i-(a_{m-1}-a_i)}(\sigma^{p^{a_{m-1}}}-1)x_{m-1}\\&=0+0.
\end{align*}
This tells us that for the vector $\widehat{\mathbf{a}} = (a_0,a_1,\cdots,a_{m-2},-\infty)$ we get $$X_{\widehat{\mathbf{a}},d,m} \simeq \langle \hat y,\hat x_0,\hat x_1,\cdots,\hat x_{m-1}\rangle.$$
From here it is not hard to show that $$X_{\mathbf{a},d,m} = \langle y,x_0,x_1,\cdots,x_{m-1}\rangle = \langle \hat y,\hat x_0,\cdots,\hat x_{m-1}\rangle \oplus \langle x_{m-1}\rangle \simeq X_{\widehat{\mathbf{a}},d,m} \oplus R_mG_{a_{m-1}}.$$

\section*{Acknowledgements}

We gratefully acknowledge discussions and collaborations with our friends and colleagues S.~Chebolu, F.~Chemotti, I.~Efrat, A.~Eimer, J.~G\"{a}rtner, S.~Gille, P.~Guillot, L.~Heller, D.~Hoffmann, J.~Labute, N.D.~Tan, A.~Topaz, R.~Vakil and K.~Wickelgren which have influenced our work in this and related papers. We would also like to thank the anonymous referee who provided valuable feedback that improved the quality and exposition of this manuscript.

\end{document}